\newtheorem{Theorem}{Theorem}[section]
\newtheorem{Lemma}[Theorem]{Lemma}
\newtheorem{Definition}{Definition}[section]
\newtheorem{remark}[Theorem]{Remark}
\newtheorem{Example}[Theorem]{Example}
\newtheorem{Corollary}[Theorem]{Corollary}
\newtheorem{Proposition}{Proposition}[section]
\title[Existence and regularity results for mixed elliptic equations]{Existence and regularity of weak solutions \\for mixed local and nonlocal semilinear \\ elliptic equations}
\author[F. Cheng]{Fuwei Cheng}
\address{School of Mathematical Sciences, Beijing Normal University, No. 19, XinJieKouWai St., HaiDian District, Beijing 100875, P. R. China}
\email{fwcheng@mail.bnu.edu.cn}
\author[X. Su]{Xifeng Su}
\address{School of Mathematical Sciences, Laboratory of Mathematics and Complex Systems (Ministry of Education)\\
	Beijing Normal University,
	No. 19, XinJieKouWai St., HaiDian District, Beijing 100875, P. R. China}
\email{xfsu@bnu.edu.cn, billy3492@gmail.com}
\author[J. Zhang]{Jiwen Zhang}
\address{School of Mathematical Sciences, Beijing Normal University, No. 19, XinJieKouWai St., HaiDian District, Beijing 100875, P. R. China}
\email{jwzhang628@mail.bnu.edu.cn}
\begin{document}

	\begin{abstract}
	We study the existence, multiplicity and regularity results of weak solutions for the Dirichlet problem of a semi-linear elliptic equation  driven by the mixture of the usual Laplacian and fractional Laplacian 
	\begin{equation*}
		\left\{%
		\begin{array}{ll}
			-\Delta u + (-\Delta)^{s} u+ a(x)\  u =f(x,u) & \hbox{in $\Omega$,} \\
			u=0 &  \hbox{in $\mathbb{R}^n\backslash\Omega$} \\
		\end{array}%
		\right.
	\end{equation*} 
	where $s \in (0,1)$, $\Omega \subset \mathbb{R}^{n}$ is a bounded domain, the coefficient $a$ is a function of $x$ and the subcritical nonlinearity $f(x,u)$ has superlinear growth at zero and infinity. 
	
	We  show the existence of a non-trivial weak solution by  Linking Theorem and Mountain Pass Theorem respectively  for  $\lambda_{1} \leqslant 0$ and $\lambda_{1} > 0$, where $\lambda_{1}$ denotes the first eigenvalue of $-\Delta + (-\Delta)^{s} +a(x)$. 
         In particular, adding a symmetric condition to $f$, we obtain infinitely many solutions via Fountain Theorem. 
	
	Moreover, for the regularity part,  we first prove  the $L^{\infty}$-boundedness of weak solutions and then establish up to  $C^{2, \alpha}$-regularity up to boundary.
%	up to  $ C^{2, \alpha}$-regularity up to boundary. 
\end{abstract}

\maketitle
\noindent{\it Keywords:}  Mountain Pass Theorem, Linking Theorem, variational methods, De Giorgi-Nash-Moser theory, regularity theory, mixed local and nonlocal elliptic equations.\bigskip

\tableofcontents

\section{Introduction}
In this article, we are concerned with the existence, multiplicity and regularity of weak solutions to the following mixed local and nonlocal elliptic problem with Dirichlet boundary condition
\begin{equation}\label{problem}
	\left\{%
	\begin{array}{ll}
		-\Delta u + (-\Delta)^{s} u+ a(x) u =f(x,u) & \hbox{in $\Omega$,} \\
		u=0 &  \hbox{in $\mathbb{R}^n\backslash\Omega$} \\
	\end{array}%
	\right.
\end{equation}
where $s \in(0,1), \Omega \subset \mathbb{R}^{n}$ is a bounded domain and 
\begin{equation}\label{Assumption of a}
	a(x)\in \begin{cases}
		L^{1}(\Omega) \quad & \text{if } n=1,\\
		L^{r}(\Omega),~ r >1 \quad & \text{if } n=2,\\
		L^{l/2}(\Omega), ~l\geqslant n \quad & \text{if } n \geqslant 3.
	\end{cases}
\end{equation}	
Here,
% $-\Delta$ is the classical Laplace operator defined by
%\[-\Delta u:= - \mathrm{div}(\nabla u)\]
%and
$(-\Delta)^{s}$ is the fractional Laplacian defined by a singular integral which coincides with Riesz derivative on the whole space
\[(-\Delta)^{s} u(x):=c(n,s) ~\mathrm{P.V.} \int_{\mathbb{R}^{n}} \frac{u(x)-u(y)}{|x-y|^{n+2 s}} d y,\]
where 
$c(n,s) >0$ is a suitable normalization constant, whose explicit value does not play a role here and $\mathrm{P.V.}$ stands for the Cauchy principal value. 
% the constant $c(n, s)$ is given by 
%\begin{equation}\label{c(n,s)}
%	c(n, s)=\left(\int_{\mathbb{R}^{n}} \frac{1-\cos \zeta_{1}}{|\zeta|^{n+2 s}} \mathrm{~d} \zeta\right)^{-1}=\frac{s 2^{2 s} \Gamma\left(\frac{n}{2}+s\right)}{\pi^{\frac{n}{2}} \Gamma(1-s)}
%\end{equation}
%with $\zeta_{1}$ being the first component of $\zeta=\left(\zeta_{1}, \zeta_{2}, \ldots, \zeta_{n}\right) \in \mathbb{R}^{n}$ and $\Gamma$ representing the Gamma function.

The mixed differential and pseudo-differential elliptic operators
\[\mathcal{L}=-\Delta+(-\Delta)^{s}, \quad \text { for some } s \in(0,1) \]
naturally arise in the study of superposition of Brownian motion and $2 s$-stable L\'evy process and have a wide range of concrete applications such as biological population dynamics (see \cite{DV21,DPV23,MPV13,PV18}), plasma physics (see \cite{BD13}), finance and control theory (see \cite{MP96}).

Recently, there is a great attention dedicated to theoretical studies of elliptic equations driven by $\mathcal{L}$, such as viscosity solution theory  \cite{EK05,GC08},
existence and non-existence theory  \cite{SVWZ24,RS15}, Harnack inequality and H\"older continuity  \cite{F09,CKSV12,GK22}, interior and boundary regularity  \cite{SE22,SVWZ25}.

% For the harmonic equation
%\[ -\Delta u +(- \Delta)^{s} u = 0, \]
%the Harnack inequality and H\"older continuity are established (see \cite{F09,CKSV12,GK22} for instance) . 
%For the linear form
%$ -\Delta u +(- \Delta)^{s} u = f(x)$,
%some  maximum principle, interior and boundary regularity are obtained in \cite{SE22}. For  the nonlinear form
%\[  -\Delta u +(- \Delta)^{s} u = f(x,u),\]
%with  superlinear  subcritical nonlinearity $f(x,u)$,
%there are also various results, such as existence and non-existence theory \cite{SVWZ24,RS15}, eigenvalue problem \cite{DFR19} and regularity theory \cite{SVWZ25}.

Our first goal in this article is to show the existence of weak solutions (see Definition~\ref{def:weak solution})
for the mixed local and nonlocal  elliptic problem~\eqref{problem} driven by the modified operator $\mathcal{L}_{a}:=-\Delta+(-\Delta)^{s} + a(x)$, which is somewhat general in the literature.

Suppose the nonlinear term $f$ : $\bar{\Omega} \times \mathbb{R} \rightarrow \mathbb{R}$ is a subcritical Carath\'eodory function verifying the following conditions:
\begin{itemize}
	\item[\textbf{(C)}] $f$ is continuous in $\bar{\Omega} \times \mathbb{R}$;
%	\item[\textbf{(P)}] $ t f(x, t) \geqslant 0 \text { for any } x \in \Omega, t \in \mathbb{R} $;
	\item[\textbf{(H1)}] there exist $c_{f} >0$ and $q \in\left(2,2^{*}\right)$, such that
	\[	|f(x, t)| \leqslant  c_{f} (1+ |t|^{q-1}) \text { for a.e. } x \in \Omega, \ t \in \mathbb{R} ; \]
	\item[\textbf{(H2)}] $ \lim \limits_{t \rightarrow 0} \frac{f(x, t)}{t}=0 \text { uniformly for any } x \in \Omega $;
	\item[\textbf{(H3)}] $\lim \limits_{|t| \rightarrow \infty} \frac{F(x, t)}{t^{2}}=+\infty$ uniformly for any $x \in \Omega$;
	\item[\textbf{(H4)}] there exists $T_{0}>0$ such that for any $x \in \Omega$, the function
	\begin{center}
		$t \mapsto \frac{f(x, t)}{t}$ is increasing in $t > T_{0}$, and decreasing in $t < -T_{0}$.
	\end{center}
\end{itemize} 
Here we denote $F(x, t):=\int_{0}^{t} f(x, \tau) d \tau$ and the critical value
\[2^{*}:= \begin{cases}
	\frac{2n}{n-2}, \quad &N \geqslant 3,\\
	\infty, \quad &N=1,2.
\end{cases}\]

The strategy for existence proofs we take is based on several minimax theorems. That is, we will deal with the functional $\mathcal{J}:\mathcal{X}^{1,2}(\Omega)\to\mathbb{R}$ related to problem~\eqref{problem},
which is defined in \eqref{energyfunctional}  as 
\[\mathcal{J}(u)= \frac{1}{2}\|u\|_{\mathcal { X } ^ { 1 , 2 } ( \Omega )}^{2} + \frac{1}{2} \int_{\Omega}a(x)u^{2}dx - \int_{\Omega} F(x,u)dx.\]
Here, the function space $\mathcal{X}^{1,2}(\Omega)$ is given in Definition~\ref{definition of the working space} as the completion of $C_0^\infty(\Omega)$ with respect to the global norm
	\[
	\|u\|_{\mathcal{X}^{1,2}(\Omega)} = \left( \|\nabla u\|_{L^2(\mathbb{R}^n)}^2 + [u]_s^2 \right)^{1/2},
	\]
where $[u]_s$ denotes the standard Gagliardo seminorm in \eqref{Gagliardoseminorm}.
	
This functional is imposed to have a suitable geometric structure and to satisfy an a priori compactness condition.
More precisely, the assumptions (H1)-(H2) are to ensure the geometry of $\mathcal{J}$, while (H3)-(H4) are to guarantee  the compactness, which is a bit weaker than the standard Ambrosetti-Rabinowitz condition \cite{MR0370183}: 
\begin{itemize}
	\item [\textbf{(AR)}]
	there exist $\mu >2$ and $r>0$ such that a.e. $x \in \Omega$,  $t \in \mathbb{R},~~|t| \geqslant r$		
	\[0<\mu F(x, t) \leqslant t f(x, t).\]
\end{itemize} 

Consequently, the global existence theorem is obtained according to the different geometric properties of $\mathcal{J}$, i.e., we apply both Linking Theorem and Mountain Pass Theorem respectively  for  $\lambda_{1} \leqslant 0$ and $\lambda_{1} > 0$ where $\lambda_1$ is the first eigenvalue of $\mathcal{L}_{a}$.

%, for the associated eigenvalue problem~\eqref{eigen problem}, we have a sequence of eigenvalues
%$\lambda _ { 1 } \leqslant \lambda _ { 2 } \leqslant \ldots \leqslant \lambda _ { k } \leqslant  \lambda _ { k + 1 } \leqslant \ldots$
%where each eigenvalue is repeated according to its multiplicity. 

% and we 
%We investigate an eigenvalue problem~\eqref{eigen problem} associated  $\mathcal{L}_{a}$ and obtain a sequence of eigenvalues
%$\lambda _ { 1 } \leqslant \lambda _ { 2 } \leqslant \ldots \leqslant \lambda _ { k } \leqslant  \lambda _ { k + 1 } \leqslant \ldots$
%where each eigenvalue is repeated according to its multiplicity. 
%Then we obtain the following global existence theorem by both Linking Theorem and Mountain Pass Theorem respectively  for  $\lambda_{1} \leqslant 0$ and $\lambda_{1} > 0$. Here, the word ``global" means, there  exists a weak solution for  all $a(x)$ with indefinite sign, no additional assumptions about the range of eigenvalues are required.
\begin{Theorem}\label{Thm: LandMP}
	Let $f$ verify (C), (H1)-(H4). We have the following conclusions:
	\begin{description}
		\item [(1) $\lambda_{1} \leqslant 0$] assume in addition when $0\in [\lambda_k, \lambda_{k+1})$\medskip
%		put the number 0 between two adjacent unequal eigenvalues
%\[ \lambda_{1} \leqslant \lambda_{2} \leqslant \ldots \leqslant \lambda_{k} \leqslant 0 < \lambda_{k+1} \leqslant \ldots \quad \text{ for some } k \in \mathbb{N},\]
% $f(x,u)$ also satisfies
		    \begin{itemize}
		        \item [\textbf{(P)}] $\lambda_{k} \frac{t^{2}}{2} \leqslant F(x,t)  \text {  for any } x \in \Omega,\  t \in \mathbb{R} $, 
                     \end{itemize}   where $\lambda _ { 1 } \leqslant \lambda _ { 2 } \leqslant \cdots \leqslant \lambda _ { k } \leqslant  \lambda _ { k + 1 } \leqslant \cdots$ are eigenvalues of problem~\eqref{eigen problem} and 
each eigenvalue is repeated according to its multiplicity, \medskip \newline
		        then problem~\eqref{problem} admits a non-trivial Linking solution $u \in \mathcal{X}^{1,2}(\Omega)$;	
		    \smallskip
		\item [(2) $\lambda_{1} > 0$] problem~\eqref{problem} admits a non-trivial Mountain Pass solution $u \in \mathcal{X}^{1,2}(\Omega)$.
	        \end{description}
\end{Theorem}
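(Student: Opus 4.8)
The plan is to verify that the energy functional $\mathcal{J}$ fits the abstract framework of either the Linking Theorem (when $\lambda_1 \leqslant 0$) or the Mountain Pass Theorem (when $\lambda_1 > 0$), by checking the geometric hypotheses and the Palais--Smale (PS) compactness condition. First I would record the functional-analytic setup: the space $\mathcal{X}^{1,2}(\Omega)$ is a Hilbert space compactly embedded in $L^p(\Omega)$ for $p \in [2,2^*)$ (this follows from the local part $\|\nabla u\|_{L^2}$ alone via Rellich--Kondrachov), the operator $\mathcal{L}_a$ has a discrete spectrum $\lambda_1 \leqslant \lambda_2 \leqslant \cdots \to \infty$ with an $L^2$-orthonormal basis of eigenfunctions $\{e_j\}$, and the quadratic form $Q(u) := \|u\|_{\mathcal{X}^{1,2}}^2 + \int_\Omega a(x)u^2\,dx$ satisfies $Q(u) = \sum \lambda_j \langle u, e_j\rangle^2$. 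The assumption \eqref{Assumption of a} is exactly what is needed so that $\int_\Omega a(x)u^2\,dx$ is well-defined and a compact (lower-order) perturbation, via Hölder and Sobolev embedding; I would cite the eigenvalue discussion established earlier in the paper (problem~\eqref{eigen problem}).

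Next I would split into the two cases. For \textbf{(2) $\lambda_1 > 0$}: here $Q$ is positive definite and equivalent to $\|\cdot\|_{\mathcal{X}^{1,2}}^2$, so $\mathcal{J}(u) = \tfrac12 Q(u) - \int_\Omega F(x,u)\,dx$. Using (H1)--(H2) one shows that for any $\varepsilon>0$ there is $C_\varepsilon$ with $|F(x,t)| \leqslant \varepsilon t^2 + C_\varepsilon |t|^q$; then for $\|u\|$ small, $\mathcal{J}(u) \geqslant c\|u\|^2 - C\|u\|^q > 0$, giving the local well at the origin (there exist $\rho,\alpha>0$ with $\mathcal{J} \geqslant \alpha$ on $\|u\| = \rho$). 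For the ``far'' point I would fix $e \neq 0$ and use (H3): $F(x,tu)/(tu)^2 \to \infty$ forces $\mathcal{J}(te) \to -\infty$ as $t \to \infty$ (dominated convergence / Fatou, after passing $t$ outside), so $\mathcal{J}(t_0 e) < 0$ for large $t_0$. For \textbf{(1) $\lambda_1 \leqslant 0$}: let $k$ be such that $0 \in [\lambda_k,\lambda_{k+1})$ and decompose $\mathcal{X}^{1,2} = V \oplus W$ with $V = \mathrm{span}\{e_1,\dots,e_k\}$ (where $Q \leqslant 0$) and $W = V^\perp$ (where $Q$ is positive definite, bounded below by $\lambda_{k+1}$ in the appropriate norm). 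On $W$ near $0$ one gets a sphere $S_\rho^W$ on which $\mathcal{J} \geqslant \alpha > 0$ exactly as above. Linking requires $\mathcal{J} \leqslant 0$ on the boundary of $Q_R := (\bar B_R \cap V) \oplus \{r e : 0 \leqslant r \leqslant R\}$ for some fixed $e \in W \setminus\{0\}$: on $V$ itself, hypothesis \textbf{(P)} gives $\mathcal{J}(v) \leqslant \tfrac12 Q(v) - \tfrac{\lambda_k}{2}\|v\|_{L^2}^2 \leqslant 0$ since $Q(v) \leqslant \lambda_k\|v\|_{L^2}^2$ on $V$; on the remaining faces one combines the negative/zero contribution of the $V$-component with (H3) controlling the $W$-direction, making $\mathcal{J} \to -\infty$ as $R \to \infty$ uniformly, so one can fix $R$ large.

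The main obstacle — and the step I would spend the most care on — is the \textbf{PS (or Cerami) condition} for a PS sequence $(u_m)$ with $\mathcal{J}(u_m) \to c$, $\mathcal{J}'(u_m) \to 0$, because the Ambrosetti--Rabinowitz condition is \emph{not} assumed; only the weaker (H3)--(H4) are available. The strategy: (i) \emph{boundedness} of $(u_m)$. Suppose not, set $v_m = u_m/\|u_m\|$, so $v_m \rightharpoonup v$ (up to subsequence) and $v_m \to v$ in $L^p$. From $\mathcal{J}(u_m)/\|u_m\|^2 \to 0$ and $Q$ being a fixed quadratic form, one gets $\int_\Omega F(x,u_m)/\|u_m\|^2\,dx \to \tfrac12 Q(v_m) \cdot (\text{limit})$, bounded. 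If $v \not\equiv 0$: on $\{v \neq 0\}$, $|u_m| \to \infty$, so (H3) and Fatou force $\int F(x,u_m)/\|u_m\|^2 \to +\infty$, a contradiction. If $v \equiv 0$: here I would use the monotonicity hypothesis (H4) in the standard ``scaled test function'' device — choosing $t_m \in [0,1]$ with $\mathcal{J}(t_m u_m) = \max_{t\in[0,1]} \mathcal{J}(t u_m)$ and deriving from $\langle \mathcal{J}'(t_m u_m), t_m u_m\rangle = 0$ together with (H4) that $\mathcal{J}(t_m u_m)$ is bounded, contradicting that it should be unbounded (since $\mathcal{J}(Ru_m/\|u_m\|) = \mathcal{J}(Rv_m) \to \tfrac12 R^2 Q(v) - 0 = $ large as $R\to\infty$ is achievable by $t_m u_m$). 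This is the classic argument of Jeanjean / Liu--Wang adapted to the mixed operator; (H4) is precisely the convexity-type hypothesis that makes $t \mapsto \mathcal{J}(tu)$ well-behaved. Then (ii) once $(u_m)$ is bounded, $u_m \rightharpoonup u$, the compact embedding gives $u_m \to u$ in $L^q$, hence $\int f(x,u_m)(u_m - u)\,dx \to 0$ by (H1) and the $L^q$--$L^{q'}$ duality of Nemytskii operators, and feeding this into $\langle \mathcal{J}'(u_m), u_m - u\rangle \to 0$ yields $Q(u_m - u) \to 0$; since $Q$ is equivalent to $\|\cdot\|^2$ on the relevant subspace (for $\lambda_1 > 0$) or can be handled component-wise on $V \oplus W$ (for $\lambda_1 \leqslant 0$, where the finite-dimensional $V$-part converges strongly automatically), we conclude $u_m \to u$ strongly. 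Finally, applying the Linking Theorem (case 1) or Mountain Pass Theorem (case 2) produces a critical point $u$ at level $c \geqslant \alpha > 0$, hence $u \neq 0$, and any critical point of $\mathcal{J}$ is a weak solution of \eqref{problem} in the sense of Definition~\ref{def:weak solution}, completing the proof.
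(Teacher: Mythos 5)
Your proposal is correct and takes essentially the same route as the paper: the Linking/Mountain-Pass geometry is derived from (H1)--(H2) on the $Z_{k+1}$-sphere (resp.\ near the origin), from (H3) for the far direction, and from (P) on $Y_k$; the Palais--Smale condition is established via a Jeanjean-type contradiction using (H3)--(H4) for boundedness followed by the compact embedding and Nemytskii-operator continuity for strong convergence --- exactly the content of the paper's Propositions~\ref{Prop:PSc}, \ref{Prop:Linking} and \ref{Prop:MP}.

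One small slip worth correcting: in the boundedness sketch for the case $v\equiv 0$ you write $\mathcal{J}(Rv_m)\to\tfrac12 R^2 Q(v)-0$, but $Q(v)=Q(0)=0$, so as written this limit is $0$, not ``large.'' What actually makes the argument work is that $Q(v_m)=\|v_m\|_{\mathcal{X}^{1,2}(\Omega)}^2+\int_\Omega a\,v_m^2\,dx = 1+\int_\Omega a\,v_m^2\,dx \to 1$ by the weak continuity of $u\mapsto\int_\Omega a\,u^2\,dx$ (the paper's Lemma~\ref{Lemma: weakly continuous}), while $\int_\Omega F(x,Rv_m)\,dx\to 0$ from the compact embedding and the bound $|F(x,t)|\leqslant\varepsilon t^2+C_\varepsilon|t|^q$; hence $\liminf_m\mathcal{J}(Rv_m)\geqslant\tfrac12 R^2$, and letting $R\to\infty$ forces $\max_{t\in[0,1]}\mathcal{J}(tu_m)\to+\infty$, which is the quantity the (H4)-based estimate must then contradict.
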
 

 We remark that 
 \begin{itemize}
\item Assumption (P) provides the linking structure. Theorem~\ref{Thm: LandMP} can be seen as a mixed local and nonlocal counterpart of local problem \cite[Theorem 2.18]{Willembook} and nonlocal problem \cite[Theorem 1]{SV13}. %There, $a(x)=\lambda \in \mathbb{R}$ does not depend on $x$. 

\item $f(x,u)$ satisfying (H1) does not mean that $f_{a}(x,u):= - a(x)u + f(x,u)$ satisfies (H1). So  the present result cannot be covered by that obtained in \cite{SVWZ24}.

\item  when $\lambda_{1} > 0$ and $a(x)$ does not change sign, one can find a non-trivial non-negative (non-positive) weak solution. In particular, while studying the non-negative solutions as in \cite{DSVZ25}, one can have 
 the symmetry properties of such solutions (see Theorem~\ref{Thm: symmetric}).
\end{itemize}

As an application of the well-known \textit{Fountain Theorem} (first established in \cite{Bartsch93}),
 by imposing
\begin{itemize}
	\item [\textbf{(S)}] 
	$f(x,-t)=-f(x,t)$ for any $x \in \Omega, t\in \mathbb{R}$,
\end{itemize}
 infinitely many weak solutions of \eqref{problem} are obtained below:
\begin{Theorem}\label{Thm: infinitely many sol Theorem}
	Assume $f$ satisfies (C), (H1), (H3), (H4) and (S). Suppose  $\lambda_{1} > 0$.
	Then, problem \eqref{problem} admits infinitely many weak solutions $\{u_{j}\}_{j \in \mathbb{N}} \subset \mathcal{X}^{1,2}(\Omega)$ such that $\mathcal{J}(u_j) \rightarrow +\infty$, as $j \rightarrow +\infty$.
\end{Theorem}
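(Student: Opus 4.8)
The plan is to apply the Fountain Theorem to the energy functional $\mathcal{J}$ on $\mathcal{X}^{1,2}(\Omega)$, which under assumption (S) is an even functional. First I would fix a Schauder-type basis decomposition of $\mathcal{X}^{1,2}(\Omega)$, e.g.\ using the eigenfunctions $\{e_j\}$ of $\mathcal{L}_a$ (which is legitimate precisely because $\lambda_1>0$ guarantees the associated bilinear form is coercive, and the embedding $\mathcal{X}^{1,2}(\Omega)\hookrightarrow L^2(\Omega)$ is compact), and set $Y_k=\mathrm{span}\{e_1,\dots,e_k\}$, $Z_k=\overline{\mathrm{span}\{e_j:j\geqslant k\}}$. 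The Fountain Theorem then requires three ingredients: (i) $\mathcal{J}\in C^1$ is even with $\mathcal{J}(0)=0$; (ii) the (PS)$_c$ condition (or Cerami condition) holds at every level $c\in\mathbb{R}$; (iii) the linking-type geometry: there exist $\rho_k>r_k>0$ such that
\[
b_k:=\inf_{u\in Z_k,\ \|u\|=r_k}\mathcal{J}(u)\to+\infty,\qquad a_k:=\max_{u\in Y_k,\ \|u\|=\rho_k}\mathcal{J}(u)\leqslant 0.
\]

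For the geometry near infinity on the finite-dimensional spaces $Y_k$, I would argue as in the Mountain Pass case (Theorem~\ref{Thm: LandMP}): by (H3) and Fatou's lemma, for $u$ in the unit sphere of the finite-dimensional space $Y_k$ one gets $\mathcal{J}(tu)\to-\infty$ as $t\to\infty$ uniformly (using equivalence of norms on $Y_k$ together with the fact that $\int_\Omega a(x)u^2\,dx$ is controlled by $\|u\|^2$), which yields $a_k\leqslant 0$ for $\rho_k$ large. For the coercive lower bound on $Z_k$, I would introduce
\[
\beta_k:=\sup_{u\in Z_k,\ \|u\|=1}\|u\|_{L^q(\Omega)},
\]
and prove $\beta_k\to 0$ as $k\to\infty$ by the standard compactness argument (any weakly convergent sequence in $Z_k$ with $\|u_k\|=1$ converges weakly to $0$, hence strongly in $L^q$ by compact embedding $\mathcal{X}^{1,2}(\Omega)\hookrightarrow\hookrightarrow L^q(\Omega)$ for $q<2^*$). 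Combined with (H1)–(H2), which give $F(x,t)\leqslant \varepsilon t^2 + C_\varepsilon|t|^q$, and using $\lambda_1>0$ so that $\frac12\|u\|^2+\frac12\int_\Omega a u^2\geqslant c_0\|u\|^2$ for some $c_0>0$, one obtains for $u\in Z_k$ with $\|u\|=r_k$
\[
\mathcal{J}(u)\geqslant c_0 r_k^2 - \varepsilon C r_k^2 - C_\varepsilon \beta_k^q r_k^q;
\]
choosing $\varepsilon$ small and then $r_k=(q C_\varepsilon \beta_k^q)^{-1/(q-2)}$ (which $\to\infty$ since $\beta_k\to0$) makes $b_k\to+\infty$.

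The remaining ingredient is the compactness condition, and this is where the main obstacle lies, since the paper deliberately works without the Ambrosetti–Rabinowitz condition (AR): under only (H1), (H3), (H4) the Palais–Smale sequences need not be obviously bounded. I would use the monotonicity condition (H4) — via the now-standard device (due to Jeanjean, Liu–Wang, Miyagaki–Souto, and used for nonlocal problems in \cite{SV13}) of showing that the function $t\mapsto \frac{1}{2}t f(x,t)-F(x,t)$ is non-decreasing for $t>T_0$ and non-increasing for $t<-T_0$, hence bounded below by its value at $\pm T_0$ — to derive boundedness of Cerami sequences: assuming $\|u_n\|\to\infty$, normalize $v_n=u_n/\|u_n\|$, pass to a weak limit $v$, split into the cases $v\equiv0$ and $v\not\equiv0$, and in each case reach a contradiction using (H3) together with $\langle\mathcal{J}'(u_n),u_n\rangle/\|u_n\|\to0$ and $\mathcal{J}(u_n)$ bounded. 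Here I would likely need to replace (PS) by the Cerami condition throughout (and use the Cerami version of Fountain Theorem, as in \cite{Bartsch93} and subsequent refinements), since without (AR) it is the Cerami condition that is natural. Once boundedness is established, the passage from a bounded (C)$_c$ sequence to a convergent subsequence is routine: weak convergence in $\mathcal{X}^{1,2}(\Omega)$, compact embedding into $L^q$, dominated convergence to handle $\int_\Omega F(x,u_n)$ and $\int_\Omega f(x,u_n)(u_n-u)$, and the fact that the quadratic part $\frac12\|\cdot\|^2+\frac12\int_\Omega a(\cdot)^2$ is weakly lower semicontinuous and its derivative is of the form (isomorphism)+(compact), giving strong convergence. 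Finally, verifying $\mathcal{J}\in C^1(\mathcal{X}^{1,2}(\Omega),\mathbb{R})$ with the stated derivative, and that it is even by (S), completes the hypotheses of the Fountain Theorem, which then yields an unbounded sequence of critical values $c_k\to+\infty$ and hence infinitely many weak solutions $u_j$ with $\mathcal{J}(u_j)\to+\infty$.
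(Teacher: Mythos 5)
Your proposal follows essentially the same route as the paper: the Fountain Theorem applied to the even functional $\mathcal{J}$ with the eigenfunction decomposition $Y_k, Z_k$, the key lemma $\beta_k \to 0$, the same geometric estimates (coercivity on $Z_k$ from $\varsigma_1>0$ when $\lambda_1>0$, and $a_k\leqslant 0$ on $Y_k$ from (H3) with equivalence of norms), and compactness obtained from (H3)--(H4) in place of (AR) — the paper proves the (PS)$_c$ condition directly in Proposition~\ref{Prop:PSc} rather than switching to Cerami sequences, but the underlying boundedness argument is the same device you describe. The only slip is your appeal to (H2), which is not among the hypotheses of this theorem and is also unnecessary: (H1) alone gives $|F(x,t)|\leqslant C(1+|t|^{q})$, which is exactly the bound the paper uses on $Z_k$.
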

Thanks to the symmetry assumption (S), if $u$ is a weak solution of problem~\eqref{problem}, so is $-u$. Hence, our results actually assure the existence of infinitely many pairs $\{u_{j},-u_{j}\}_{j \in \mathbb{N}}$ of weak solutions. We also point out that, all of the above existence and multiplicity results are valid for a ``good" $a(x) \in L^{\infty}(\Omega)$ instead of \eqref{Assumption of a}.
\smallskip

Our next goal is to establish the regularity theory of weak solutions to problem~\eqref{problem}. 

We first use \textit{De Giorgi-Nash-Moser theory} to obtain the following two $L^{\infty}$-regularity theorems by a rather complete analysis on  $a(x) \not \equiv 0$ and
\begin{itemize}
\item $f=f(x,u)$  or
\item  $f=f(x)$.
\end{itemize}
It is worth noting that, when $a(x) \equiv 0$, $L^{\infty}$-regularity and interior (or boundary) regularity have been proved in \cite{SE22}  and  \cite{SVWZ25}  for the linear term $f(x)$ and the  nonlinearity $f(x,u)$ respectively.

Noticing that it is immediately to see the following continuous imbedding facts for dimensions 1 and 2 below:
\begin{equation}\label{imbedding when n=1,2}
	\begin{cases}
		\mathcal { X } ^ { 1 , 2 } ( \Omega ) \hookrightarrow L^{\infty}(\Omega) \quad & \text{ if } n=1,\\
		\mathcal { X } ^ { 1 , 2 } ( \Omega ) \hookrightarrow L^{p}(\Omega),\ 1\leqslant p < \infty \quad  & \text{ if } n=2,
	\end{cases}
\end{equation}
it suffices to  show the $L^\infty$-boundedness for dimension $n\geqslant 3$.

On the one side, we have
\begin{Theorem}\label{Thm: boundedness}
	Let $n \geqslant 3$ and  $\Omega \subset \mathbb{R}^{n}$ be an open bounded domain. Suppose $u \in \mathcal { X } ^ { 1 , 2 } ( \Omega )$ is a weak solution of
	\[-\Delta u + (-\Delta)^{s} u+ a(x) u =f(x,u)  \quad \text{in }\Omega. \]
	Assume that there exist $c_{f} >0$ and $q \in [2,2^{*}]$ such that
	\begin{equation}\label{H1*}
		|f(x, t)| \leqslant  c_{f} \left(1+ |t|^{q-1} \right) 
		\quad \text { for a.e. } x \in \Omega,\  t \in \mathbb{R}.
	\end{equation} 
	If either of the following conditions holds:
	\begin{itemize}
		\item [(1)] $0 \leqslant a(x) \in L^{\frac{l}{2}}(\Omega)$, for some $l \geqslant n$;\smallskip
		\item [(2)] $a(x) \in L^{\infty}(\Omega)$,
	\end{itemize}
	then $u \in L^{\infty}(\Omega)$.
	Moreover, there exists a constant $C_{0}>0$, such that 
	\begin{equation*}
		\| u \| _ { L ^ { \infty } ( \Omega ) } \leqslant C_0  \left( 1 + \int _ { \Omega } | u | ^ { 2 ^ { * } \beta _ { 1 } } d x \right) ^ { \frac { 1 } { 2 ^ { * } \left( \beta _ { 1 } - 1 \right) } } ,  
	\end{equation*}
	where
	\[ C_0  : = \left\{ \begin{array} { l l } 
		C_0(n, \Omega, c_{f}) & \text{if (1) holds},  \\ 
		C_0(n, \Omega, c_{f}, |a|_{\infty} ) & \text{if (2) holds}  \end{array} \right. 
	\text{ and} \quad
	\beta _ { 1 } : = \left\{ \begin{array} { l l } 
		( 2 ^ { * } + 1 )/2 & \text{if (1) holds},  \\ 
		2 ^ { * } /2 & \text{if (2) holds.} \end{array} \right. \]
\end{Theorem}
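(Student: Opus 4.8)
The plan is to run a Moser iteration (De Giorgi–Nash–Moser scheme) adapted to the mixed operator $\mathcal{L}=-\Delta+(-\Delta)^s$. First I would fix a weak solution $u\in\mathcal{X}^{1,2}(\Omega)$ and, for a parameter $\beta\geqslant 1$ and a truncation level $T>0$, use the test function $\varphi = u\,u_T^{2(\beta-1)}$ where $u_T:=\min\{|u|,T\}$ (so that $\varphi\in\mathcal{X}^{1,2}(\Omega)$, which is the reason the truncation is needed). Plugging $\varphi$ into the weak formulation produces three groups of terms: the local gradient term $\int \nabla u\cdot\nabla\varphi$, the nonlocal Gagliardo term, and the lower-order contributions $\int a(x)u\varphi$ and $\int f(x,u)\varphi$. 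For the gradient term one gets, after discarding a good sign, a lower bound by a constant times $\int |\nabla(u\,u_T^{\beta-1})|^2$; the nonlocal bilinear term is nonnegative after applying the standard algebraic inequality $(a-b)(a\,a_T^{2(\beta-1)}-b\,b_T^{2(\beta-1)})\geqslant c_\beta\,(a\,a_T^{\beta-1}-b\,b_T^{\beta-1})^2$, so it can simply be thrown away (its sign helps). Hence the left-hand side controls $\|w_T\|_{H^1}^2$ with $w_T:=u\,u_T^{\beta-1}$, and via the Sobolev inequality, $\|w_T\|_{L^{2^*}}^2$.

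Next I would estimate the right-hand side. The nonlinearity is handled by (H1*): $|f(x,u)\varphi|\leqslant c_f(1+|u|^{q-1})|u|u_T^{2(\beta-1)}$, and since $q\leqslant 2^*$ one splits $\int |u|^q u_T^{2(\beta-1)}$ using Hölder with the critical exponent, absorbing the part that looks like $\|w_T\|_{L^{2^*}}^2$ into the left side once the tail $\int_{\{|u|>M\}}|u|^{2^*}$ is made small by choosing $M$ large (this is the usual "absorbing" step and forces the appearance of the factor $1+\int_\Omega |u|^{2^*\beta_1}$ in the final bound). The term with $a(x)$ is where the two cases diverge: in case (2), $a\in L^\infty$, so $|\int a u\varphi|\leqslant |a|_\infty\int |u|^2 u_T^{2(\beta-1)}$ is immediately dominated by $\|w_T\|_{L^2}^2\leqslant \|w_T\|_{L^{2^*}}^{2\theta}\|w_T\|_{L^?}^{\cdots}$ — handled just like a subcritical nonlinearity — which is why $\beta_1=2^*/2$ suffices there. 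In case (1), $0\leqslant a\in L^{l/2}$ with $l\geqslant n$, the sign $a\geqslant 0$ lets us move $\int a u\varphi = \int a\,u^2 u_T^{2(\beta-1)}\geqslant 0$ to the left and discard it entirely, so no integrability of $a$ beyond $L^{l/2}$ enters the first iteration bound; the exponent $\beta_1=(2^*+1)/2$ arises instead from how the $L^{l/2}$ norm of $a$ would enter if one did keep it, or from the bootstrap of the starting integrability — I'd need to check which, but in any case the sign condition is doing the real work.

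After the single-step inequality
\[
\|w_T\|_{L^{2^*}}^2 \leqslant C\beta^2\Big(1+\int_\Omega |u|^{2^*\beta_1}\,dx\Big)\|u\|_{L^{2^*\beta}}^{\text{(something)}},
\]
I would let $T\to\infty$ (monotone convergence, using $u\in L^{2^*}$ to start) to replace $w_T$ by $|u|^\beta$, giving a reverse-Hölder / self-improving inequality $\|u\|_{L^{2^*\beta}}\leqslant (C\beta^2 A)^{1/(2\beta)}\|u\|_{L^{2\beta}}$ with $A:=1+\int_\Omega|u|^{2^*\beta_1}$. Then I would iterate with $\beta_m:=(2^*/2)^m\beta_1$ (or $\beta_m:=(2^*/2)^{m-1}\beta_1$), check that $\sum \log\beta_m/\beta_m<\infty$ and $\sum 1/\beta_m<\infty$ so the product of constants converges, and conclude $u\in L^\infty$ with the displayed bound; the power $\frac{1}{2^*(\beta_1-1)}$ on $A$ is exactly the geometric-series sum $\sum_{m\geqslant 1}\frac{1}{2\beta_m}$ with the chosen $\beta_m$.

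The main obstacle I anticipate is the bookkeeping in the first iteration step — specifically, making the "tail absorption" uniform in $\beta$ and in the truncation $T$ simultaneously, and keeping track of exactly how the constant $C_0$ depends on $|a|_\infty$ in case (2) versus being independent of $a$ in case (1). The nonlocal term itself is not an obstacle: its bilinear form has the right sign after the elementary convexity inequality, so it only ever helps. The delicate points are (i) verifying $u\,u_T^{2(\beta-1)}\in\mathcal{X}^{1,2}(\Omega)$ is a legitimate test function (needs $u\in L^\infty$ a priori only on truncation, which is why $u_T$ and not $|u|$ appears), and (ii) choosing the truncation parameter $M$ in the tail estimate independently of the iteration index so the scheme closes — this is the standard subtlety in critical-growth Moser iteration and I'd treat it exactly as in the $a\equiv 0$ case of \cite{SE22, SVWZ25}, to which this reduces when $a$ vanishes.
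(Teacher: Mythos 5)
Your overall strategy is the paper's: a Moser iteration with a truncated power-type test function, discarding the nonlocal bilinear term by a convexity/positivity inequality, exploiting the sign of $a$ in case (1) and $|a|_\infty$ in case (2), and a tail-absorption step to handle the critical growth $q=2^*$. The cosmetic differences (test function $u\,u_T^{2(\beta-1)}$ versus the paper's $\varphi(u)\varphi'(u)$ with the convex truncation \eqref{29}, discrete algebraic inequality versus the pointwise inequality $(-\Delta)^s\varphi(u)\leqslant\varphi'(u)(-\Delta)^su$) are immaterial. However, the mechanism you propose for closing the iteration has a genuine gap: you plan to perform the tail absorption at \emph{every} step and to choose the cutoff $M$ ``independently of the iteration index.'' This cannot work as stated, because at step $\beta_m$ the term to be absorbed carries the prefactor $C\beta_m$, so the smallness requirement $C\beta_m\bigl(\int_{\{|u|>M\}}|u|^{2^*}\,dx\bigr)^{2/n}\leqslant 1/2$ forces $M=M(\beta_m)\to\infty$ at a non-quantitative rate (the tail of $|u|^{2^*}$ decays, but with no rate), and then the leftover $C\beta_m M(\beta_m)^{2^*-2}\|w_T\|_2^2$ destroys the convergence of the product of iteration constants and the stated dependence of $C_0$. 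The paper's resolution is different: the absorption is done exactly \emph{once}, at the fixed exponent $\beta_1$, to upgrade $u\in L^{2^*}$ to $u\in L^{2^*\beta_1}$; thereafter the exponents are defined by the recursion $2\beta_{m+1}+2^*-2=2^*\beta_m$ (equivalently $\beta_{m+1}-1=\tfrac{2^*}{2}(\beta_m-1)$), so the critical factor $|u|^{2^*-2}$ is paid for by the integrability already established at the previous step and no smallness argument is needed again; the product $\prod_k(C\beta_k)^{1/(2(\beta_k-1))}$ then converges and the telescoping quantity $\bigl(1+\int_\Omega|u|^{2^*\beta_m}\,dx\bigr)^{1/(2^*(\beta_m-1))}$ directly yields the stated bound. (In particular the exponent $\tfrac{1}{2^*(\beta_1-1)}$ is just the normalization of this telescoping quantity at $m=1$, not the series $\sum_m\tfrac{1}{2\beta_m}$ you invoke, which does not equal it.)

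A second, smaller point you left open: the source of the two values of $\beta_1$. In case (1) the $a$-term is simply discarded by sign, and the constraint on the starting exponent comes from the zeroth-order part of $f$ (the ``$1$'' in $c_f(1+|t|^{q-1})$), which produces $\int_\Omega|u|^{2\beta_1-1}\,dx$; this is finite from $u\in L^{2^*}(\Omega)$ precisely when $2\beta_1-1\leqslant 2^*$, i.e.\ $\beta_1=(2^*+1)/2$. In case (2) the term $\beta\,|a|_\infty\int_\Omega|u|^{2\beta}\,dx$ appears, which requires $2\beta_1\leqslant 2^*$, i.e.\ $\beta_1=2^*/2$; it is not ``just like a subcritical nonlinearity'' in the sense you suggest, but an honest extra lower-order term that lowers the admissible starting exponent. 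Neither the integrability $a\in L^{l/2}$ nor any bootstrap enters $\beta_1$ in case (1); only the sign does, as you suspected.
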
 

Note that compared to the assumption (H1), $q$ can be chosen to be $2$ or $2^*$ here.

On the other side, when $f$ depends only on $x$, we obtain an  $L^{\infty}$-regularity of weak solutions to problem~\eqref{problem}. % where $a(x)$ could change sign. 
\begin{Theorem}\label{Thm: boundedness2}
Let $n \geqslant 3$ and $ \Omega \subset \mathbb{R}^{n}$ be an open bounded domain. 
	Suppose $u \in \mathcal { X } ^ { 1 , 2 } ( \Omega )$ is a weak solution of
	\[-\Delta u + (-\Delta)^{s} u+ a(x) u =f(x)  \quad \text{in }\Omega. \]
	Assume $a(x), f(x) \in L^{l}(\Omega)$ for some $l>n/2$.
	Then $u \in L^{\infty}(\Omega)$.
\end{Theorem}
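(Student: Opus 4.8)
The plan is to run a De Giorgi--Nash--Moser iteration, very much in the spirit of the proof of Theorem~\ref{Thm: boundedness}, but reorganised so as to accommodate a sign-changing zero order coefficient and an inhomogeneity $f=f(x)$. Since $\|\nabla u\|_{L^2(\mathbb{R}^n)}<\infty$ and $u\equiv0$ outside $\Omega$, the Sobolev inequality gives $u\in L^{2^*}(\Omega)$, and this is the initial integrability which the iteration will boost, along a geometric sequence of exponents, up to $L^\infty$.

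First I would reduce to a clean subsolution inequality for a function bounded below by $1$. Put $\bar a:=|a|+|f|\in L^{l}(\Omega)$ with $l>n/2$. A Kato-type inequality for $\mathcal{L}$---the local part being classical, the nonlocal part coming from the elementary pointwise inequality $(\xi-\eta)\big(\psi(\xi)-\psi(\eta)\big)\geq0$ for nondecreasing $\psi$, applied to the Gagliardo bilinear form---shows that $\tilde w:=1+|u|\geq1$ is a weak subsolution of $\mathcal{L}\tilde w\leq \bar a\,\tilde w$ in $\Omega$; equivalently one may argue with $u^{\pm}$, each of which is a subsolution of the same type. Shifting by $1$ is exactly what makes this work: it converts the bound $|f|+|a|\,|u|\leq \bar a\,\tilde w$ into one zero order term with an $L^{l}$ weight, so that $f$ need no longer be treated separately.

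Next is the iteration step. For $\beta\geq1$ and $T>1$ I test the subsolution inequality against $\varphi:=\tilde w_T^{\,2\beta-2}(\tilde w-1)$, where $\tilde w_T:=\min(\tilde w,T)$; this $\varphi$ is admissible (nonnegative, vanishing outside $\Omega$, and lying in $\mathcal{X}^{1,2}(\Omega)$ as a Lipschitz image of $u$ supported in $\bar\Omega$). The nonlocal part of the bilinear form is nonnegative and is simply discarded; the local part is, after the standard rearrangement, bounded below by $\beta^{-2}\|\nabla v\|_{L^2(\mathbb{R}^n)}^2$ with $v:=\tilde w\,\tilde w_T^{\,\beta-1}$, hence by the Sobolev inequality by $c\,\beta^{-2}\|v-1\|_{L^{2^*}(\Omega)}^2$ (note $v-1$ is supported in $\bar\Omega$). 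On the right-hand side $\int_\Omega\bar a\,\tilde w\,\varphi\leq\int_\Omega\bar a\,v^2$, and here the hypothesis enters: since $l>n/2$, the conjugate exponent obeys $2l'=2l/(l-1)<2^*$ strictly, so Hölder, the interpolation $\|v-1\|_{2l'}\leq\|v-1\|_{2}^{1-\theta}\|v-1\|_{2^*}^{\theta}$ with $\theta\in(0,1)$, and Young's inequality let us absorb a small multiple of $\|v-1\|_{2^*}^2$ on the left at the cost of a constant that is merely polynomial in $\beta$. Letting $T\to\infty$ (monotone convergence) produces a recursion of the form $\|\tilde w\|_{L^{2^*\beta}(\Omega)}\leq (C\beta^{\gamma})^{1/(2\beta)}\max\{\|\tilde w\|_{L^{2\beta}(\Omega)},1\}$, with $C,\gamma$ depending only on $n$, $\Omega$ and $\|\bar a\|_{L^{l}(\Omega)}$. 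Iterating with $\beta_0=2^*/2$ and $2\beta_m=2^*\beta_{m-1}$ (so that the conclusion of step $m-1$ supplies exactly the integrability needed to start step $m$), and using $\sum_m\beta_m^{-1}\log(C\beta_m^{\gamma})<\infty$, the constants multiply to a finite number; hence $\|\tilde w\|_{L^{2^*\beta_m}(\Omega)}$ remains bounded as $m\to\infty$, so $\tilde w\in L^\infty(\Omega)$, i.e. $u\in L^\infty(\Omega)$.

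I expect the main obstacle to be precisely the borderline character of $l$: the absorption above is possible only because $l>n/2$ \emph{strictly}, which forces $2l'<2^*$ and produces a genuine (if $\beta$-dependent) gain at each step---this is exactly what lifts the classical Brezis--Kato conclusion ``$u\in L^{p}$ for all $p<\infty$'' to ``$u\in L^\infty$''. A secondary, more routine point is making the Kato-type reduction (or the direct treatment of $u^{\pm}$) rigorous for the mixed operator within the weak formulation on $\mathcal{X}^{1,2}(\Omega)$, keeping careful track of the data outside $\Omega$. As an alternative one could stop the iteration at ``$u\in L^{p}$ for every $p<\infty$'', observe that then $f-au\in L^{q}(\Omega)$ for some $q>n/2$, and invoke the $a\equiv0$ boundedness result of \cite{SE22}; this trades the gain argument for a bootstrap but is no shorter.
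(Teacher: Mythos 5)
Your proposal is correct, but it follows a genuinely different route from the paper. The paper uses a \emph{De Giorgi iteration}: it tests the weak formulation against $(u-k)^+$, discards the nonlocal term via the pointwise inequality $(\phi(x)-\phi(y))(\phi^+(x)-\phi^+(y))\geq|\phi^+(x)-\phi^+(y)|^2$, estimates the right-hand side by H\"older and Young (the strict inequality $l>n/2$ producing the crucial exponent $1+\frac{2}{n}-\frac{2}{l}>1-\frac{1}{l}$), obtains for $k\geq k_0$ the Caccioppoli-type bound $\int_{A_k}|Dv|^2\leq C k^2|A_k|^{1-1/l}$, and then runs the classical level-set decay recursion $\varphi(k_j)\leq\varphi(k_0)\nu^{-j}$ to force $\sup u<\infty$, with $\inf u$ handled by applying the same argument to $-u$. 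You instead run a \emph{Moser iteration} after a Brezis--Kato--Trudinger-style reduction: the shift $\tilde w=1+|u|$ absorbs the inhomogeneity and sign changes of $a$ into a single zero-order coefficient $\bar a=|a|+|f|\in L^l$, the fractional Kato inequality lets you discard the nonlocal part, and the strict inequality $2l'<2^*$ (equivalently $l>n/2$) supplies the interpolation gain that keeps the iterate constants polynomial in $\beta$, hence summable after taking logarithms. It is worth noting that the paper explicitly claims ``the Moser iterative formula is no longer applicable'' for this case; your argument shows that claim is overstated --- Moser iteration does go through once the zero-order data are repackaged as above, in the spirit of \cite[Theorem 8.17]{GTbook}. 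Each approach has its merit: the paper's De Giorgi argument avoids tracking $\beta$-dependence across infinitely many iterates and yields a slightly more explicit bound on $\sup|u|$ in terms of $k_0$ and $\|(u-k_0)^+\|_{L^2}$, while your Moser route handles both signs of $u$ in one pass and makes the role of $l>n/2$ as a quantitative gain transparent. Your closing remark on the bootstrap via \cite{SE22} is also sound and would give the shortest path from $u\in\cap_{p<\infty}L^p$ to $u\in L^\infty$.
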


Once the $L^{\infty}$-regularity is obtained, interior $ C^{2, \alpha}$-regularity  can be obtained naturally by mollifier technique and cutoff argument as in \cite{SVWZ25}.
\begin{Theorem}
	Suppose $u \in \mathcal { X } ^ { 1 , 2 } ( \Omega )$ is a bounded weak solution of 
	\[-\Delta u + (-\Delta)^{s} u+ a(x) u =f(x,u)  \quad \text{in }\Omega, \]
	where $a(x) \in  L^{\infty}(\Omega) \cap C^{\alpha}_{loc}(\Omega)$ and $f(x,t) \in C^{\alpha}_{loc}(\Omega \times \mathbb{R})$. Assume $V$ is an open domain with $ V \subset \subset \Omega$. Then, $u \in C^{2, \alpha}(\bar{V})$ for any $\alpha \in (0, 1)$.
\end{Theorem}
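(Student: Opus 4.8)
The plan is to treat the nonlocal term $(-\Delta)^{s}u$ as a lower-order forcing term for the Laplacian and to bootstrap using the classical $W^{2,p}$ and Schauder theory, exactly in the spirit of the interior regularity argument of \cite{SVWZ25}; the potential term $a(x)u$ is harmless once $u$ is known to be Hölder continuous, since $a\in L^{\infty}(\Omega)\cap C^{\alpha}_{loc}(\Omega)$. Since $u$ is a bounded weak solution with $u\equiv 0$ on $\mathbb{R}^{n}\setminus\Omega$, we have $u\in L^{\infty}(\mathbb{R}^{n})$. Fix open sets $V\subset\subset V'\subset\subset V''\subset\subset\Omega$ and a cutoff $\chi\in C_{c}^{\infty}(\Omega)$ with $\chi\equiv 1$ on $V''$, and for small $\varepsilon>0$ put $u_{\varepsilon}:=u*\rho_{\varepsilon}$. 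Since $-\Delta$ and $(-\Delta)^{s}$ are convolution operators, $u_{\varepsilon}\in C^{\infty}(\mathbb{R}^{n})$ solves, in $V''$ for $\varepsilon$ small,
\[ -\Delta u_{\varepsilon}=\big(f(\cdot,u)\big)*\rho_{\varepsilon}-(au)*\rho_{\varepsilon}-(-\Delta)^{s}u_{\varepsilon}=:g_{\varepsilon},\qquad \|u_{\varepsilon}\|_{L^{\infty}(\mathbb{R}^{n})}\leqslant\|u\|_{L^{\infty}(\mathbb{R}^{n})}. \]
The goal is to bound $u_{\varepsilon}$ in $C^{2,\alpha}(\overline{V})$ uniformly in $\varepsilon$ and then let $\varepsilon\to 0$.

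\emph{Step 1 (first gain: $C^{1,\beta}$).} Write $(-\Delta)^{s}u_{\varepsilon}=(-\Delta)^{s}(\chi u_{\varepsilon})+(-\Delta)^{s}\big((1-\chi)u_{\varepsilon}\big)$. On $V''$ the second term is smooth, with all norms controlled by $\|u\|_{L^{\infty}(\mathbb{R}^{n})}$, because the kernel is smooth off the diagonal. For the first term $\chi u_{\varepsilon}\in C_{c}^{\infty}(\Omega)$, and combining the interpolation inequality $\|(-\Delta)^{s}w\|_{L^{p}}\leqslant\delta\|D^{2}w\|_{L^{p}}+C_{\delta}\|w\|_{L^{p}}$ with the interior Calderón--Zygmund estimate for $-\Delta u_{\varepsilon}=g_{\varepsilon}$, applied on a chain of shrinking balls so as to absorb the $\delta$-term, yields $\|u_{\varepsilon}\|_{W^{2,p}(V')}\leqslant C(p)$ uniformly in $\varepsilon$, for every $p<\infty$ (here $(f(\cdot,u))*\rho_{\varepsilon}$ and $(au)*\rho_{\varepsilon}$ are bounded in $L^{\infty}$). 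By Morrey's embedding $u_{\varepsilon}$ is bounded in $C^{1,\beta}(\overline{V'})$ for every $\beta\in(0,1)$, uniformly in $\varepsilon$; letting $\varepsilon\to 0$ gives $u\in C^{1,\beta}_{loc}(\Omega)$.

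\emph{Step 2 (Schauder bootstrap).} With $u\in C^{1,\beta}_{loc}(\Omega)\cap L^{\infty}(\mathbb{R}^{n})$, the composition rule gives $f(\cdot,u)\in C^{0,\alpha}_{loc}(\Omega)$ and $au\in C^{0,\alpha}_{loc}(\Omega)$ (using $f\in C^{\alpha}_{loc}(\Omega\times\mathbb{R})$, $a\in C^{\alpha}_{loc}(\Omega)$ and $u\in C^{0,1}_{loc}\subset C^{0,\alpha}_{loc}$), while the near/far splitting again yields the mapping property $(-\Delta)^{s}:C^{k,\gamma}_{loc}\cap L^{\infty}\to C^{k,\gamma-2s}_{loc}$ whenever $k+\gamma-2s$ is positive and non-integer. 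Hence $g_{\varepsilon}$ is bounded in $C^{0,\gamma_{0}}_{loc}$ for some $\gamma_{0}>0$, and interior Schauder for $-\Delta u_{\varepsilon}=g_{\varepsilon}$ gives a uniform $C^{2,\gamma_{0}}_{loc}$ bound. Now iterate: from $u\in C^{2,\gamma_{j}}_{loc}$ one gets $(-\Delta)^{s}u\in C^{0,\min(\alpha,\,2+\gamma_{j}-2s)}_{loc}$, so the Hölder exponent of $g$, hence of $D^{2}u$, increases by the fixed amount $2-2s>0$ at each step until it reaches $\alpha$. After finitely many iterations $u\in C^{2,\alpha}_{loc}(\Omega)$, and choosing $V\subset\subset V'\subset\subset\Omega$ we conclude $u\in C^{2,\alpha}(\overline{V})$.

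\emph{Main obstacle.} The delicate point is Step 1: on the right-hand side of the equation for $u_{\varepsilon}$ the term $(-\Delta)^{s}u_{\varepsilon}$ secretly involves second derivatives of $u_{\varepsilon}$, so it cannot simply be moved to the data. Overcoming this requires the interpolation inequality for $(-\Delta)^{s}$, the splitting into a genuinely lower-order far part and a near part, and an absorption argument on a shrinking family of balls to compensate the loss of domain at each localization. Once this first $W^{2,p}$ (hence $C^{1,\beta}$) estimate is available, the nonlocal term is of order strictly below that of $-\Delta$ and the remaining Schauder iteration is routine.
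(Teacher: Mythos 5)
Your proof is correct, but it takes a genuinely different route from the paper's. The paper first establishes interior $C^{1,\alpha}$-regularity (Theorem~\ref{Thm: Interior C1-regularity}) by invoking a priori $C^{1,\alpha}$-estimates for the \emph{mixed} operator $-\Delta + (-\Delta)^s$ from \cite[Proposition 2.18]{FRRO22}, after mollification and a cutoff that isolates the far part of the nonlocal term; it then feeds $g(\cdot,u)\in C^\alpha_{loc}$ into a single $C^{2,\alpha}$-Schauder step (using \cite[Theorem~4.6]{GTbook} and an interpolation estimate for $(-\Delta)^s$ that is absorbed via the iteration lemma \cite[Proposition 5.2]{SVWZ25}). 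You instead bypass the mixed-operator a priori theory entirely: your Step~1 obtains $C^{1,\beta}_{loc}$ from classical Calder\'on--Zygmund $W^{2,p}$-estimates for $-\Delta$ alone, together with the $L^p$-interpolation inequality $\|(-\Delta)^s w\|_{L^p}\leqslant\delta\|D^2w\|_{L^p}+C_\delta\|w\|_{L^p}$ and the same near/far splitting; and your Step~2 replaces the one-shot Schauder-plus-interpolation argument by an iterative Schauder bootstrap in which the H\"older exponent of the right-hand side gains a fixed amount $2-2s>0$ per step, reaching $\alpha$ in finitely many iterations. What your approach buys is greater self-containment — it needs only classical Laplacian theory and the elementary mapping properties of $(-\Delta)^s$ on Hölder and Sobolev spaces, rather than the black-box mixed-operator estimates of \cite{FRRO22} and \cite{SVWZ25} — at the cost of a slightly longer chain of reasoning (the $W^{2,p}\to C^{1,\beta}$ detour and the finite iteration). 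Two small points of care worth making explicit if you write this up: your stated mapping property ``$(-\Delta)^s:C^{k,\gamma}_{loc}\cap L^\infty\to C^{k,\gamma-2s}_{loc}$'' should be read as ``total regularity decreases by $2s$'' (the exponent $\gamma-2s$ need not lie in $(0,1)$, so the pair $(k,\gamma-2s)$ has to be reindexed), and the uniform-in-$\varepsilon$ H\"older bound on $(-\Delta)^su_\varepsilon$ relies on the commutation $(-\Delta)^s(u*\rho_\varepsilon)=((-\Delta)^s u)*\rho_\varepsilon$, valid here since $u\in C^{1,\beta}_{loc}\cap L^\infty(\mathbb{R}^n)$ makes $(-\Delta)^s u$ a well-defined locally H\"older function. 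Neither remark affects the validity of the argument.
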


We point out that, in order to obtain up to $C^{2,\alpha}$-regularity of weak solutions, it is natural to assume the coefficient function $a(x)$ has a better regularity, namely
 $a(x) \in  L^{\infty}(\Omega) \cap C^{\alpha}_{loc}(\Omega)$ instead of $a(x) \in L^{\frac{l}{2}}(\Omega), l \geqslant n$.\smallskip
 
Furthermore, using the H\"{o}lder estimate of $(-\Delta)^{s}u$ and the regularity theory of weak solutions to local problem driven by $- \Delta $, we then obtain $C^{2, \alpha}$-regularity up to boundary by \textit{continuity method}.
\begin{Theorem}\label{Thm: C2-regularity up to boundary}
	Let $s\in(0, 1/2)$ and $\alpha \in (0,1)$ be such that $\alpha + 2s \leqslant 1$.
	Assume $\partial \Omega$ is of class $C^{2, \alpha}$. Suppose $u \in \mathcal { X } ^ { 1 , 2 } ( \Omega )$ is a weak solution of \eqref{problem}. If $a(x) \in C^{\alpha}(\bar\Omega) $ and $f \in C^{\alpha}(\bar\Omega \times \mathbb{R})$ satisfies (H1), then $u \in C^{2, \alpha}(\bar\Omega)$.
\end{Theorem}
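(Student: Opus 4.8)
The plan is to convert \eqref{problem} into a purely local Dirichlet problem for the Laplacian by moving the nonlocal term to the right--hand side, and then to bootstrap: first upgrade $u$ to a globally Lipschitz function, and afterwards to $C^{2,\alpha}(\bar\Omega)$ through the Schauder theory for $-\Delta$, with the continuity method furnishing the $C^{2,\alpha}$ a priori estimate for the mixed operator. Throughout one extends $u$ by $0$ to all of $\mathbb{R}^n$ and writes the equation as $-\Delta u = g$ in $\Omega$, $u=0$ on $\partial\Omega$, where $g:=f(x,u)-a(x)u-(-\Delta)^s u$. This reduction is legitimate because $u$ vanishes outside $\Omega$ and (after Step~1) is continuous up to $\partial\Omega$, while once $u\in C^{0,1}(\mathbb{R}^n)$ and $s<\tfrac12$ the nonlocal bilinear form in the weak formulation agrees with $\int_\Omega(-\Delta)^s u\,\varphi$ for $\varphi\in C_c^\infty(\Omega)$, so that the equation holds classically in $\Omega$.

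\emph{Step 1 (bootstrap to global Lipschitz regularity).} First I would record that $u\in L^\infty(\Omega)$: this is Theorem~\ref{Thm: boundedness}(2), whose hypothesis $a\in L^\infty(\Omega)$ holds since $a\in C^\alpha(\bar\Omega)$ and whose condition \eqref{H1*} follows from (H1); for $n\le2$ it is immediate from \eqref{imbedding when n=1,2}. Then the zero extension of $u$ lies in $L^\infty(\mathbb{R}^n)\cap H^1(\mathbb{R}^n)$ with compact support, so, since $s<\tfrac12$, $(-\Delta)^s u\in H^{1-2s}(\mathbb{R}^n)\hookrightarrow L^{p_1}(\mathbb{R}^n)$ for some $p_1>2$; hence $g\in L^{p_1}(\Omega)$ (the other two summands being bounded) and Calder\'on--Zygmund estimates for $-\Delta$ on the $C^{2,\alpha}$ domain $\Omega$ give $u\in W^{2,p_1}(\Omega)$. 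Its zero extension then lies in $W^{1,q}(\mathbb{R}^n)$ for $q$ up to $p_1^{*}$, so $(-\Delta)^s u\in W^{1-2s,q}(\mathbb{R}^n)\hookrightarrow L^{p_2}(\mathbb{R}^n)$ with $\tfrac1{p_2}=\tfrac1{p_1}-\tfrac{2-2s}{n}$. Iterating, the reciprocal exponent drops by the fixed amount $\tfrac{2-2s}{n}>0$ at each cycle, so after finitely many cycles $p_k>n$ and $u\in W^{2,p_k}(\Omega)\hookrightarrow C^{1,\gamma}(\bar\Omega)$ for some $\gamma\in(0,1)$; in particular $u$ is Lipschitz on $\bar\Omega$ and, vanishing outside $\Omega$, $u\in C^{0,1}(\mathbb{R}^n)$ with compact support.

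\emph{Step 2 (H\"older data) and Step 3 (continuity method).} Since $u\in C^{0,1}(\mathbb{R}^n)$ has compact support and $s<\tfrac12$, the H\"older estimate for the fractional Laplacian gives $(-\Delta)^s u\in C^{0,1-2s}(\mathbb{R}^n)$, and $\alpha+2s\le1$ then yields $(-\Delta)^s u\in C^{\alpha}(\bar\Omega)$. Moreover $a(\cdot)u(\cdot)\in C^{\alpha}(\bar\Omega)$ as a product of $C^\alpha$ functions, and $f(\cdot,u(\cdot))\in C^{\alpha}(\bar\Omega)$ because $f\in C^{\alpha}(\bar\Omega\times\mathbb{R})$ and $u$ is Lipschitz, so $|f(x,u(x))-f(y,u(y))|\le C(|x-y|+|u(x)-u(y)|)^{\alpha}\le C'|x-y|^{\alpha}$ (here the Lipschitz, not merely H\"older, character of $u$ is what prevents a loss of exponent). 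Hence $g\in C^{\alpha}(\bar\Omega)$, and $u$ solves $-\Delta u=g$ in $\Omega$, $u=0$ on $\partial\Omega$, so the $C^{2,\alpha}$ boundary Schauder estimate for $-\Delta$ on the $C^{2,\alpha}$ domain $\Omega$ gives $u\in C^{2,\alpha}(\bar\Omega)$. Equivalently, and as the paper phrases it, I would package this as a continuity--method argument for $\mathcal L_t:=-\Delta+t(-\Delta)^s+a$, $t\in[0,1]$: $t=0$ is classical Schauder; openness is a perturbation argument using that $(-\Delta)^s$ sends zero--extended $C^{2,\alpha}(\bar\Omega)$ functions into $C^{1-2s}(\mathbb{R}^n)\subset C^{\alpha}(\bar\Omega)$; and closedness follows from the uniform bound obtained by inserting $-\Delta v=h-t(-\Delta)^s v-av$ into Schauder, estimating $\|(-\Delta)^s v\|_{C^{\alpha}}\le C\|v\|_{C^{0,1}}$, absorbing $\|v\|_{C^{0,1}}$ and $\|v\|_{C^{\alpha}}$ by interpolation against $\|v\|_{C^{2,\alpha}}$, and controlling the residual $\|v\|_{L^\infty}$ through the $L^\infty$--bounds already established; feeding $h=f(\cdot,u(\cdot))\in C^{\alpha}(\bar\Omega)$ at $t=1$ concludes.

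\emph{Main obstacle.} The genuine difficulty is the circularity between ``$(-\Delta)^s u$ being a good datum up to $\partial\Omega$'' and ``$u$ being regular up to $\partial\Omega$'': the equation one wants to exploit already contains the operator one is trying to control. The staged bootstrap of Step~1 breaks this, each cycle using only previously gained information, and it works precisely because for $s<\tfrac12$ the fractional operator costs strictly less than two derivatives, giving the positive gain $\tfrac{2-2s}{n}$ per cycle. A second, sharper point to monitor is that $u$ necessarily has a jump in its gradient across $\partial\Omega$ (it is identically $0$ outside $\Omega$), so global Lipschitz regularity is the ceiling; the hypotheses $s<\tfrac12$ and $\alpha+2s\le1$ are exactly calibrated so that $(-\Delta)^s$ of a Lipschitz function lands in $C^{\alpha}$, which is what lets the local Schauder machinery close the argument at the optimal exponent.
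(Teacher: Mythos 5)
Your argument is correct, but it reaches the conclusion by a route that differs from the paper's in two essential places. First, for the intermediate regularity you run a self-contained Calder\'on--Zygmund/Sobolev bootstrap ($u\in W^{2,p_k}(\Omega)$ with $1/p_{k+1}=1/p_k-(2-2s)/n$, terminating at $p_k>n$, hence $u\in C^{1,\gamma}(\bar\Omega)$ and globally Lipschitz after zero extension), whereas the paper simply invokes the boundary $C^{1,\alpha}$ result of \cite{SVWZ22} after the $L^\infty$ bound of Theorem~\ref{Thm: boundedness}; your version is longer but more self-contained, and the exponent bookkeeping is right. Second, at the final stage you move the nonlocal term to the right-hand side: since $u\in C^{0,1}(\mathbb{R}^n)$ with compact support and $\alpha+2s\leqslant 1$, $(-\Delta)^su\in C^{1-2s}(\mathbb{R}^n)\subset C^{\alpha}(\bar\Omega)$, so $g=f(x,u)-au-(-\Delta)^su\in C^{\alpha}(\bar\Omega)$ and the classical Schauder solvability for $-\Delta$ on the $C^{2,\alpha}$ domain, together with uniqueness of the $H^1_0$ weak solution of the Poisson problem, identifies $u$ with the $C^{2,\alpha}(\bar\Omega)$ solution. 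The paper instead keeps $(-\Delta)^s$ on the left, proves solvability of $\mathcal{L}v=g$ in $\mathcal{C}^{2,\alpha}(\bar\Omega)$ for $C^{\alpha}$ data by the continuity method applied to $\mathcal{L}_t=(1-t)(-\Delta)+t\mathcal{L}$, and identifies $v=u$ via Lax--Milgram; the two mechanisms rest on the same key estimate (the $C^{1-2s}$ bound for $(-\Delta)^s$ of a Lipschitz, compactly supported function, which is exactly where $s<1/2$ and $\alpha+2s\leqslant1$ enter), but yours dispenses with the continuity method entirely, while the paper's yields as a by-product the solvability of the linear mixed Dirichlet problem for arbitrary H\"older data. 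One caution on your optional repackaging: you place $a$ inside $\mathcal{L}_t=-\Delta+t(-\Delta)^s+a$, and since $a$ is not assumed nonnegative, injectivity of $\mathcal{L}_t$ (hence the uniform a priori bound $\|v\|_{C^{2,\alpha}}\leqslant C\|\mathcal{L}_tv\|_{C^{\alpha}}$ required by the continuity method) can fail at an eigenvalue crossing; the paper avoids this by keeping $-a u+f(x,u)$ on the right and deforming only $-\Delta+t(-\Delta)^s$. Since your main line never uses that packaging, this does not affect the validity of your proof.
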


We remark that the restriction \(s \in (0, 1/2)\) and $\alpha\in(0,1)$ satisfying \(\alpha + 2s \leqslant 1\) in Theorem \ref{Thm: C2-regularity up to boundary} is sharp. We give a detailed explanation in Remark \ref{remark: restriction s}. 

The paper is organized as follows. In section~\ref{Sec:Preliminaries}, we  collect some elementary results of $\mathcal { X } ^ { 1 , 2 } ( \Omega )$, introduce the functional setting (such as weak solutions and energy functional) and deal with  some properties of an eigenvalue problem of $\mathcal{L}_{a}$. 

In section~\ref{Sec: Global existence results}, we obtain the existence of a non-trivial weak solution by both Linking Theorem and Mountain Pass Theorem   for  $\lambda_{1} \leqslant 0$ and $\lambda_{1} > 0$ respectively.
In particular, after imposing symmetry condition on the nonlinearity,  we obtain infinitely many weak  solutions using Fountain Theorem. 

In section \ref{Sec: Regularity of weak solution }, we use De Giorgi-Nash-Moser theory to have the global boundedness of weak solutions according to various conditions on the coefficient $a(x)$. Moreover, we improve their regularity  to $C^{2, \alpha}$ up to boundary and give some symmetry properties of the solutions.

\section{Preliminaries}	\label{Sec:Preliminaries} \setcounter{equation}{0}
In this section, we provide several  preliminary facts and results which will be useful in the sequel.

\subsection{The variational framework} Let us start by introducing the basic functional setting
to problem~\eqref{problem}.% related to the operator $\mathcal{L}=-\Delta+(-\Delta)^{s}$.  

Let $s \in(0,1)$. If $u:\mathbb{R}^{n} \rightarrow{R}$ is a measurable function, we set
\begin{equation}\label{Gagliardoseminorm}
[u]_{s}:= \left(\int_{\mathbb{R}^{n}} \int_{\mathbb{R}^{n}} \frac{(u(x)-u(y))^{2}}{|x-y|^{n+2 s}} d x d y \right)^{\frac{1}{2}} 
\end{equation}
which is \textit{Gagliardo seminorm} of order $s$. Fractional Sobolev space $H ^ { s }(\mathbb{ R }^{n}) $ is defined by
\[ H^{s}(\mathbb{ R }^{n}) =\left\{ u\in L^{2}(\mathbb{ R }^{n}): [u]_{s}^{2} <\infty \right\}.\]

If $u \in H ^ { s } ( \mathbb { R } ^ { n } ) ,$ then there is a relation between  $( - \Delta ) ^ { s }u$ and $[u]_{s}$ :
\begin{equation}\label{seminorm}
	\left[u\right]_{s}^{2} = 2c(n,s)^{-1}\left\| (- \Delta )^{\frac{s}{2}}u \right\|_{L^{2}\left(\mathbb{R}^{n}\right)}^{2}.
\end{equation}
 See for example \cite[Proposition 3.6]{DNPV12}.

After the above preparations, we now define an appropriate function space which is close related to the Dirichlet problem~\eqref{problem}.
\begin{Definition}[Function space]\label{definition of the working space}
	Given a bounded open set $ \Omega \subseteq \mathbb { R } ^ { n }$, we define the function space
	$\mathcal { X } ^ { 1 , 2 } ( \Omega )$ as the completion of $C _ { 0 } ^ { \infty } ( \Omega )$ with respect to the global norm 
	\[ \|u\|_{\mathcal { X } ^ { 1 , 2 } ( \Omega )} =\left( \| \nabla u \| _ { L ^ { 2 } \left( \mathbb { R } ^ { n } \right) } ^ { 2 } + [ u ] _ { s } ^ { 2 } \right) ^ { 1 / 2 } , \quad u \in C _ { 0 } ^ { \infty } ( \Omega ) .\]
\end{Definition}

It is easy to see  $\| \cdot \|_{\mathcal { X } ^ { 1 , 2 } ( \Omega )}$ is induced by a mixed local and nonlocal inner product
\[B _ { s } ( u , v ) : = \int_{ \mathbb { R } ^ { n } }  \nabla u \cdot \nabla v ~ d x + \int_{\mathbb{R}^{n}} \int_{\mathbb{R}^{n}} \frac { (u ( x ) - u ( y ) )( v ( x ) - v ( y ) ) } { | x - y | ^ { n + 2 s } } d x d y \] 
and $\mathcal { X } ^ { 1 , 2 } ( \Omega )$ is a \textit{ Hilbert space}. Observe that $B _ { s } ( u , v )$ is a bilinear mapping.

We then give some useful equivalent characterizations of  $\mathcal{X}^{1,2}(\Omega)$.
\begin{Proposition}\label{Prop: X12 equivalent characterization} 
	The space $\mathcal{X}^{1,2}(\Omega)$ has the following equivalent characterization:
	\begin{equation*}\label{X12}
		\begin{aligned}
			 \mathcal{X}^{1,2}(\Omega) 
			&= \overline{C_{0}^{\infty}(\Omega)}^{\|\cdot\|_{H^{1}(\mathbb{R}^{n})}}
			= \left \{ u \in H ^ { 1 } ( \mathbb { R } ^ { n } ) : u | _ { \Omega } \in H _ { 0 } ^ { 1 } ( \Omega )\text{ and } u \equiv 0 \text{ a.e. in } \mathbb{R}^{n} \backslash \Omega\right \}\\
			&= \left \{ u \in L ^ { 2 ^ { * } } ( \mathbb { R } ^ { n } ) : u \equiv 0 \text{ a.e. in } 
			\mathbb { R } ^ { n } \backslash \Omega , \nabla u \in L ^ { 2 } ( \mathbb { R } ^ { n } )\text{ and }  [ u ] _ {s} < \infty \right \}. 
		\end{aligned}
	\end{equation*}
\end{Proposition}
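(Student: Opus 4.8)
The plan is to establish the chain of equalities by proving three separate set inclusions/identities, moving through increasingly weak descriptions of the space. First I would show that $\mathcal{X}^{1,2}(\Omega) = \overline{C_0^\infty(\Omega)}^{\|\cdot\|_{H^1(\mathbb{R}^n)}}$. The key observation here is that on $C_0^\infty(\Omega)$ the norm $\|\cdot\|_{\mathcal{X}^{1,2}(\Omega)}$ and the $H^1(\mathbb{R}^n)$ norm are equivalent: indeed $[u]_s^2 \leqslant C(n,s)\big(\|u\|_{L^2(\mathbb{R}^n)}^2 + \|\nabla u\|_{L^2(\mathbb{R}^n)}^2\big)$ by the standard interpolation estimate bounding the Gagliardo seminorm by the full $H^1$ norm (splitting the double integral into the regions $|x-y|<1$ and $|x-y|\geqslant 1$), while conversely a Poincaré-type inequality on the bounded set $\Omega$ controls $\|u\|_{L^2}$ by $\|\nabla u\|_{L^2}$, and $[u]_s\geqslant 0$. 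Hence the two completions of $C_0^\infty(\Omega)$ coincide as sets with equivalent norms.

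Next I would identify $\overline{C_0^\infty(\Omega)}^{\|\cdot\|_{H^1(\mathbb{R}^n)}}$ with $\{u\in H^1(\mathbb{R}^n): u|_\Omega\in H_0^1(\Omega) \text{ and } u\equiv 0 \text{ a.e. in } \mathbb{R}^n\setminus\Omega\}$. The inclusion ``$\subseteq$'' is immediate since any $H^1(\mathbb{R}^n)$-limit of functions supported in $\bar\Omega$ vanishes a.e. outside $\Omega$ and its restriction is an $H_0^1(\Omega)$-limit. For ``$\supseteq$'', given such a $u$, its restriction is approximated in $H_0^1(\Omega)$ by $\varphi_k\in C_0^\infty(\Omega)$; extending each $\varphi_k$ by zero yields functions in $C_0^\infty(\Omega)$ (viewed on $\mathbb{R}^n$) whose $H^1(\mathbb{R}^n)$ norm equals the $H^1(\Omega)$ norm (since $u\equiv 0$ outside $\Omega$, the gradient has no distributional contribution on $\partial\Omega$), so they converge to $u$ in $H^1(\mathbb{R}^n)$.

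Finally I would show equality with $\{u\in L^{2^*}(\mathbb{R}^n): u\equiv 0 \text{ a.e. in } \mathbb{R}^n\setminus\Omega,\ \nabla u\in L^2(\mathbb{R}^n),\ [u]_s<\infty\}$. The inclusion ``$\subseteq$'' follows from the Sobolev embedding $H^1(\mathbb{R}^n)\hookrightarrow L^{2^*}(\mathbb{R}^n)$ together with the fact that $[u]_s<\infty$ for $u\in H^1(\mathbb{R}^n)$ supported in a bounded set. For ``$\supseteq$'', the main point is to recover $u\in L^2$: since $u$ is supported in the bounded set $\Omega$ and lies in $L^{2^*}(\mathbb{R}^n)$, Hölder's inequality gives $\|u\|_{L^2(\Omega)}\leqslant |\Omega|^{1/n}\|u\|_{L^{2^*}(\Omega)}<\infty$, so $u\in H^1(\mathbb{R}^n)$ with $u\equiv 0$ outside $\Omega$; a Poincaré argument (or density) then places $u$ in the previous set. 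I expect the main obstacle to be the careful justification that extension-by-zero does not create a singular part in the gradient across $\partial\Omega$ — i.e. that for $u\in H_0^1(\Omega)$ the zero-extension lies in $H^1(\mathbb{R}^n)$ with $\nabla(\text{extension}) = \widetilde{\nabla u}$ — which is standard but should be invoked explicitly, and the verification that these characterizations are genuinely equivalent rather than merely nested requires tracking the boundedness of $\Omega$ at each step.
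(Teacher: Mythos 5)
Your argument is correct and follows essentially the same route as the paper: the norm equivalence on $C_0^\infty(\Omega)$ via the bound $[u]_s^2 \leqslant C\left(\|u\|_{L^2(\mathbb{R}^n)}^2+\|\nabla u\|_{L^2(\mathbb{R}^n)}^2\right)$ (which is precisely the embedding $H^1(\mathbb{R}^n)\hookrightarrow H^s(\mathbb{R}^n)$ the paper cites) combined with the Sobolev--Poincar\'e inequality on the bounded set $\Omega$, after which the identifications with the two explicit sets are the standard zero-extension and density facts the paper leaves implicit. The only caveat --- shared with the paper, which states the result for an arbitrary bounded open set --- is that passing from ``$u\in H^1(\mathbb{R}^n)$ with $u=0$ a.e. outside $\Omega$'' back to $u|_\Omega\in H_0^1(\Omega)$ tacitly uses some mild regularity of $\partial\Omega$, so your closing remark about extension-by-zero is pointing at the right subtlety.
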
 
\begin{proof}
	Note that  $u$ identically vanishes outside $\Omega$, and the  $L ^ { 2 }$-norm of $\nabla u$ on the whole of $\mathbb{R}^{n}$ is just the same as that restricted to $\Omega$. 
	Proposition~\ref{Prop: X12 equivalent characterization} follows from the \textit{continuous embedding} of $H^{1}(\mathbb{R}^{n})$ into
	$H^{s}(\mathbb{R}^{n})$ (see \cite[Proposition 2.2]{DNPV12}) and the classical
	\textit{Sobolev Poincar\'e inequality}.
\end{proof}

Since $\|u\|_{\mathcal{X}^{1,2}(\Omega )} \simeq\|\nabla u\|_{L^{2}(\Omega)}$ for all $ u \in \mathcal{X}^{1,2}(\Omega )$, we deduce the following proposition by \textit{Sobolev-Rellich imbedding theorem}.
\begin{Proposition}\label{Prop: continuous and compact embedding} The embedding $\mathcal {X}^{1,2}(\Omega) \subset L^{2^{*}}(\Omega)$ is continuous; the embedding $\mathcal {X}^{1,2} (\Omega)\subset L^{m}(\Omega),~m \in [1,2^*)$ is compact.
\end{Proposition}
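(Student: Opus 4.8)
The plan is to reduce both assertions to the classical Sobolev inequality and the Rellich--Kondrachov theorem on the bounded domain $\Omega$, using the equivalent description of $\mathcal{X}^{1,2}(\Omega)$ from Proposition~\ref{Prop: X12 equivalent characterization} together with the norm equivalence announced just before the statement.

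First I would record that equivalence. For $u\in\mathcal{X}^{1,2}(\Omega)$ one has $u\equiv 0$ a.e.\ on $\mathbb{R}^n\setminus\Omega$, so $\|\nabla u\|_{L^2(\mathbb{R}^n)}=\|\nabla u\|_{L^2(\Omega)}$ and, by Proposition~\ref{Prop: X12 equivalent characterization}, $u|_\Omega\in H_0^1(\Omega)$. The continuous embedding $H^1(\mathbb{R}^n)\hookrightarrow H^s(\mathbb{R}^n)$ gives $[u]_s^2\leqslant C\bigl(\|u\|_{L^2(\mathbb{R}^n)}^2+\|\nabla u\|_{L^2(\mathbb{R}^n)}^2\bigr)$, while the Poincar\'e inequality on the bounded set $\Omega$ yields $\|u\|_{L^2(\mathbb{R}^n)}=\|u\|_{L^2(\Omega)}\leqslant C_P\|\nabla u\|_{L^2(\Omega)}$. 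Combining these, $[u]_s^2\leqslant C'\|\nabla u\|_{L^2(\mathbb{R}^n)}^2$, hence
\[
\|\nabla u\|_{L^2(\mathbb{R}^n)}^2\leqslant \|u\|_{\mathcal{X}^{1,2}(\Omega)}^2=\|\nabla u\|_{L^2(\mathbb{R}^n)}^2+[u]_s^2\leqslant (1+C')\|\nabla u\|_{L^2(\mathbb{R}^n)}^2,
\]
so that $\|u\|_{\mathcal{X}^{1,2}(\Omega)}\simeq\|\nabla u\|_{L^2(\mathbb{R}^n)}=\|\nabla u\|_{L^2(\Omega)}\simeq\|u|_\Omega\|_{H_0^1(\Omega)}$, the last step again by Poincar\'e.

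Next I would transfer the two statements through the restriction map $u\mapsto u|_\Omega$, which by the above is a bounded linear isomorphism of $\mathcal{X}^{1,2}(\Omega)$ onto $H_0^1(\Omega)$, and which preserves $L^p$-norms: $\|u\|_{L^p(\Omega)}=\|u|_\Omega\|_{L^p(\Omega)}$ for every $p$, since $u$ vanishes outside $\Omega$. Therefore the continuity of $\mathcal{X}^{1,2}(\Omega)\hookrightarrow L^{2^*}(\Omega)$ is exactly the classical Sobolev inequality $H_0^1(\Omega)\hookrightarrow L^{2^*}(\Omega)$ (with $2^*=\tfrac{2n}{n-2}$ for $n\geqslant 3$; the cases $n=1,2$ are recorded in \eqref{imbedding when n=1,2}), and the compactness of $\mathcal{X}^{1,2}(\Omega)\hookrightarrow L^m(\Omega)$ for $m\in[1,2^*)$ is exactly the Rellich--Kondrachov theorem on the bounded domain $\Omega$: a bounded sequence in $\mathcal{X}^{1,2}(\Omega)$ restricts to a bounded sequence in $H_0^1(\Omega)$, which admits a subsequence converging in $L^m(\Omega)$, and this is the same as convergence of the zero-extensions in $L^m(\Omega)$.

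Since every ingredient --- the embedding $H^1\hookrightarrow H^s$, the Poincar\'e inequality, the Sobolev inequality and Rellich--Kondrachov --- is classical and already cited in the excerpt, there is no genuine obstacle here. The only point requiring a line of care is the norm equivalence, i.e.\ that the Gagliardo seminorm $[u]_s$ is controlled by $\|\nabla u\|_{L^2}$ on $\mathcal{X}^{1,2}(\Omega)$; this is precisely where the Dirichlet condition (zero extension) and the boundedness of $\Omega$ enter. Everything else is bookkeeping to pass between the zero-extended function on $\mathbb{R}^n$ and its restriction to $\Omega$.
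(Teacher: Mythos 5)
Your argument is correct and follows essentially the same route as the paper: the norm equivalence $\|u\|_{\mathcal{X}^{1,2}(\Omega)}\simeq\|\nabla u\|_{L^{2}(\Omega)}$ (which the paper states and you verify via $H^{1}(\mathbb{R}^n)\hookrightarrow H^{s}(\mathbb{R}^n)$ and Poincar\'e) followed by the classical Sobolev and Rellich--Kondrachov embeddings for $H_{0}^{1}(\Omega)$. Your write-up merely supplies the details of the equivalence and of passing between the zero-extension and its restriction, which the paper leaves implicit.
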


We now give the definition of weak solutions to problem~\eqref{problem}. 
\begin{Definition}\label{def:weak solution}
	We say that $u \in \mathcal{X}^{1,2}(\Omega)$ is a weak solution of problem~\eqref{problem}  if 
	\begin{equation}\label{weak solution}
		B _ { s } ( u , \phi ) +  \int_{\Omega} a(x)\ u\  \phi \ dx = \int_{\Omega} f(x, u)\  \phi \ dx,
	\end{equation}
	for every test function $\phi \in \mathcal{X}^{1,2}(\Omega)$. 
\end{Definition} 
\begin{remark}
	The Definition~\ref{def:weak solution} is well posed. That is,
	\begin{itemize}
		\item [(i)] Owing to the Green's formula and the relation \eqref{seminorm} between  $( - \Delta ) ^ { s }u$ and $[u]_{s}$, it is easy to check 
		\begin{equation*}
			\begin{aligned}
				&\quad \int_{\mathbb{R}^{n}} \int_{\mathbb{R}^{n}} \frac { (u ( x ) - u ( y ) )( \phi ( x ) - \phi ( y ) ) } { | x - y | ^ { n + 2 s } } d x d y\\
				&=\int_{\mathbb{R}^{n}} (-\Delta)^{s}u(x)~ \phi(x) d x  =\int_{\mathbb{R}^{n}} (-\Delta)^{s/2}u(x)~ (-\Delta)^{s/2}\phi (x) d x \\
				&\leqslant \|(-\Delta)^{s/2}u(x)\|_{L^{2}(\mathbb{ R }^{n})}~\|(-\Delta)^{s/2} \phi (x)\|_{L^{2}(\mathbb{ R }^{n})}\\
				&=2^{-1}c(n,s) ~[u]_{s}[\phi]_{s}<+\infty.
			\end{aligned}
		\end{equation*}
		\item [(ii)] Thanks to ${X}^{1,2}(\Omega) \hookrightarrow L^{2^{*}}(\Omega )$ and the assumption \eqref{Assumption of a} of $a(x)$, we have
		\[(a(x)u,\phi )_{L^2(\Omega)}:= \int_{\Omega} a(x) \ u\  \phi \ dx
%		\leqslant \begin{cases}
%			|a|_{1} |u|_{\infty} |\phi|_{\infty} \quad &\text { if } n=1,\\
%			|a|_{r} |u|_{\frac{2r}{r-1}} |\phi|_{\frac{2r}{r-1}} \quad &\text { if } n=2,\\
%			|a|_{n/2}|u|_{2^{*}}|\phi |_{2^{*}}  \quad &\text { if } n \geqslant 3
%		\end{cases}
	 <+\infty.\]
		\item [(iii)] Since $f(x,u)$ satisfies the assumption (H1) with $q\in(2,2^{*})$, by H\"{o}lder inequality, we have 
		\begin{equation*}
%			\begin{aligned}
				 \quad \int_{\Omega} f(x, u) \phi dx \leqslant c_{f} \int_{\Omega} (1+|u|^{q-1}) |\phi| dx  \leqslant c_{f} ( |\Omega|^{1/2}|\phi|_{2}+ |u|_{q}^{q-1}|\phi|_{q})<+\infty.
%			\end{aligned}
		\end{equation*}
		 Here and in the sequel, we denote $\|\cdot\|_{L^{p}(\Omega)}$ by $|\cdot|_{p}$.
	\end{itemize}
\end{remark}

Finally, one can observe that weak solutions of problem \eqref{problem} can be found as  critical points of the energy functional $\mathcal{J} : \mathcal{X}^{1,2}(\Omega) \rightarrow \mathbb{R}$ defined by
\begin{equation}\label{energyfunctional}
\mathcal{J}(u)= \frac{1}{2}\|u\|_{\mathcal { X } ^ { 1 , 2 } ( \Omega )}^{2} + \frac{1}{2} \int_{\Omega}a(x)u^{2}dx - \int_{\Omega} F(x,u)dx.
\end{equation}
%Here and in the sequel, we denote $\|\cdot\|_{L^{p}(\Omega)}$ by $|\cdot|_{p}$.
It is easy to check that $\mathcal{J} \in C^1(\mathcal{X}^{1,2}(\Omega),\mathbb{R})$, and 
\begin{equation}\label{1}
	\langle  \mathcal{J}^{\prime}(u), \phi \rangle
	= B _ { s } ( u , \phi ) +  (a(x)u,\phi )_{L^2(\Omega)} - \int_{\Omega} f(x, u) \phi dx 
\end{equation}
for all $\phi \in \mathcal{X}^{1,2}(\Omega)$.

\subsection{Eigenvalue problem of $-\Delta + (-\Delta)^{s} +a(x)$}
We deal with the weak eigenvalue problem associated to $\mathcal{L}_{a}$ and give the following variational proposition. 
\begin{Proposition}\label{Prop:eigen}
	The weak eigenvalue problem 
	\begin{equation}\label{eigen problem}
		\left\{\begin{array}{l}
				B_{s}(u,\phi)+ \int_{\Omega} a(x)u \phi d x = \lambda \int_{\Omega} u \phi dx, \quad \forall \phi \in \mathcal{X}^{1,2}(\Omega) \\
				u \in \mathcal{X}^{1,2}(\Omega)
			\end{array}\right.
	\end{equation}
	\begin{itemize}
		\item [(i)] admits an eigenvalue
		\[\lambda _ { 1 } : = \inf \left\{ \|u\|_{\mathcal { X } ^ { 1 , 2 } ( \Omega )}^{2} + \int _ { \Omega }a ( x ) u ^ { 2 }  d x : u \in \mathcal{X}^{1,2}(\Omega),\|u\|_{L^{2}(\Omega)}=1  \right\}> - \infty .\]
		and there exists a non-trivial function $e_{1} \in \mathcal{X}^{1,2}(\Omega)$ such that $\left\|e_{1}\right\|_{L^{2}(\Omega)}=1$, which is an eigenfunction corresponding to $\lambda_{1}$, attaining the minimum;
		\item[(ii)] possesses a divergent sequence of eigenvalues $\left\{\lambda_{k}\right\}_{k \in \mathbb{N}}$ with 
		\[- \infty < \lambda_{1} \leqslant \lambda_{2} \leqslant \cdots \leqslant \lambda_{k} \leqslant \lambda_{k+1} \leqslant \cdots\]
		and
		$\lambda_{k} \rightarrow+\infty \text { as } k \rightarrow \infty $.
		Moreover, for any $k \in \mathbb{N}$, the eigenvalues can be characterized as follows:
		\begin{equation}\label{eigenvalue}
			\lambda_{k+1}=\min \left\{\|u\|_{\mathcal { X } ^ { 1 , 2 } ( \Omega )}^{2} + \int_{ \Omega }a ( x ) u ^ { 2 }dx: u \in P_{k+1} ,\|u\|_{L^{2}(\Omega)}=1\right\} ,
		\end{equation}
		where
		\[P_{k+1}:=\left\{u \in \mathcal{X}^{1,2}(\Omega) \text { s.t. } \int_{ \Omega } u e_{j} dx=0 \quad \forall j=1, \ldots, k\right\}.\]
		\end{itemize}
	\end{Proposition}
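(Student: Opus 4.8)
The plan is to run the classical direct method together with a Courant--Fischer induction, the only genuinely delicate point being the behaviour of the potential term $u\mapsto\int_\Omega a(x)u^2\,dx$ under weak convergence.

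\textbf{Step 1 (a priori coercivity of the Rayleigh quotient).} First I would prove that for every $\varepsilon>0$ there is $C_\varepsilon>0$ with
\[ \int_\Omega |a(x)|\,u^2\,dx \le \varepsilon\,\|u\|_{\mathcal{X}^{1,2}(\Omega)}^2 + C_\varepsilon\,|u|_2^2,\qquad u\in\mathcal{X}^{1,2}(\Omega). \]
This follows by splitting $a = a\mathbf{1}_{\{|a|\le M\}} + a\mathbf{1}_{\{|a|>M\}}$: the bounded part contributes at most $M|u|_2^2$, while for the tail one applies H\"older with the exponents dictated by \eqref{Assumption of a} together with the continuous Sobolev embeddings of Proposition~\ref{Prop: continuous and compact embedding} (and \eqref{imbedding when n=1,2} when $n=1,2$) to get $\int_\Omega a\mathbf{1}_{\{|a|>M\}}u^2 \le C\,\|a\mathbf{1}_{\{|a|>M\}}\|\,\|u\|_{\mathcal{X}^{1,2}(\Omega)}^2$, where the displayed norm of the tail of $a$ tends to $0$ as $M\to\infty$ by dominated convergence. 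Restricting to $|u|_2=1$ yields $\|u\|_{\mathcal{X}^{1,2}(\Omega)}^2+\int_\Omega a u^2 \ge (1-\varepsilon)\|u\|_{\mathcal{X}^{1,2}(\Omega)}^2 - C_\varepsilon > -\infty$, which gives the lower bound in (i) and, at the same time, shows that any minimizing sequence for the Rayleigh quotient is bounded in $\mathcal{X}^{1,2}(\Omega)$.

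\textbf{Step 2 (weak continuity of the potential term).} Next I would show that $u_j\rightharpoonup u$ in $\mathcal{X}^{1,2}(\Omega)$ implies $\int_\Omega a u_j^2\,dx\to\int_\Omega a u^2\,dx$. Using the same truncation, the bounded piece converges because $u_j\to u$ in $L^2(\Omega)$ by the compact embedding, hence $u_j^2\to u^2$ in $L^1(\Omega)$; the tail piece is bounded by $C\|a\mathbf{1}_{\{|a|>M\}}\|$ uniformly in $j$ thanks to $\sup_j\|u_j\|_{\mathcal{X}^{1,2}(\Omega)}<\infty$, hence negligible for large $M$. I expect this to be the main obstacle, precisely because in the borderline integrability case (e.g. $a\in L^{n/2}$ when $n\ge 3$) the relevant Lebesgue exponent equals $2^*$, the Sobolev embedding into $L^{2^*}(\Omega)$ is not compact, and one is forced to argue through the cut-off rather than invoke compactness directly.

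\textbf{Step 3 (minimizer and eigenfunction for $\lambda_1$).} Given a minimizing sequence $(u_j)$ with $|u_j|_2=1$, Step 1 gives a subsequence $u_j\rightharpoonup e_1$ in $\mathcal{X}^{1,2}(\Omega)$ with $u_j\to e_1$ in $L^2(\Omega)$, so $|e_1|_2=1$ and $e_1\not\equiv0$. Weak lower semicontinuity of $\|\cdot\|_{\mathcal{X}^{1,2}(\Omega)}$ and Step 2 give $\|e_1\|_{\mathcal{X}^{1,2}(\Omega)}^2+\int_\Omega a e_1^2 \le \liminf_j\big(\|u_j\|_{\mathcal{X}^{1,2}(\Omega)}^2+\int_\Omega a u_j^2\big)=\lambda_1$, and the reverse inequality holds by definition, so $e_1$ attains $\lambda_1$. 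To obtain \eqref{eigen problem}, fix $\phi\in\mathcal{X}^{1,2}(\Omega)$ and differentiate $t\mapsto\big(\|e_1+t\phi\|_{\mathcal{X}^{1,2}(\Omega)}^2+\int_\Omega a(e_1+t\phi)^2\big)\big/|e_1+t\phi|_2^2$ at $t=0$ (legitimate since $|e_1|_2\neq0$); vanishing of the derivative yields $B_s(e_1,\phi)+\int_\Omega a e_1\phi = \lambda_1\int_\Omega e_1\phi$.

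\textbf{Step 4 (higher eigenvalues, divergence, characterization).} I would then argue by induction: given $e_1,\dots,e_k$ orthonormal in $L^2(\Omega)$ with eigenvalues $\lambda_1\le\cdots\le\lambda_k$, the set $P_{k+1}$ of the statement is weakly closed in $\mathcal{X}^{1,2}(\Omega)$ since each constraint $u\mapsto\int_\Omega u e_j$ is continuous under $L^2$-convergence, so Steps 1--3 applied over $P_{k+1}$ produce $e_{k+1}\in P_{k+1}$ realizing the minimum \eqref{eigenvalue}. The resulting Lagrange identity holds a priori only against $\phi\in P_{k+1}$, but testing the equations for $e_j$ ($j\le k$) against $e_{k+1}$ and using $\int_\Omega e_{k+1}e_j=0$ gives $B_s(e_{k+1},e_j)+\int_\Omega a e_{k+1}e_j=0=\lambda_{k+1}\int_\Omega e_{k+1}e_j$, so decomposing an arbitrary $\phi$ along $e_1,\dots,e_k$ and $P_{k+1}$ extends the equation to all of $\mathcal{X}^{1,2}(\Omega)$; moreover $P_{k+1}\subset P_k$ forces $\lambda_{k+1}\ge\lambda_k$. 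Finally, if $(\lambda_k)$ were bounded, Step 1 would make $(e_k)$ bounded in $\mathcal{X}^{1,2}(\Omega)$, hence precompact in $L^2(\Omega)$, contradicting $|e_k-e_j|_2^2=2$ for $k\ne j$; therefore $\lambda_k\to+\infty$, which completes the proof.
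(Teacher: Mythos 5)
Your proposal is correct: every step (the $\varepsilon$-form bound, the truncation proof of weak continuity of $u\mapsto\int_\Omega a u^2$, the direct method with the Lagrange-multiplier computation, the induction over $P_{k+1}$ with the extension of the Euler--Lagrange identity to all test functions, and the divergence of $\lambda_k$ via $L^2$-orthonormality against compactness) is sound and covers exactly what the proposition asserts. The overall skeleton coincides with the paper's (which proves only the constrained-minimization Lemma~\ref{Lem: eigen} and defers the inductive construction to the analogue in \cite{SV13}), but your treatment of the only delicate point, the potential term, is genuinely different: the paper's Lemma~\ref{Lemma: weakly continuous} handles the borderline case $a\in L^{n/2}(\Omega)$ by noting that $u_j^2$ is bounded in $L^{n/(n-2)}(\Omega)$ and converges a.e., hence $u_j^2\rightharpoonup u^2$ in $L^{n/(n-2)}(\Omega)$, and then pairs this weak convergence against the fixed duality element $a\in L^{n/2}(\Omega)$; you instead split $a$ into a bounded part and a tail that is small in the critical Lebesgue norm, which yields both the weak continuity and, as a bonus, the coercivity estimate $\int_\Omega|a|u^2\leqslant\varepsilon\|u\|_{\mathcal{X}^{1,2}(\Omega)}^2+C_\varepsilon|u|_2^2$. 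That estimate is not in the paper: there, the lower bound $\lambda_1>-\infty$ and the boundedness of minimizing sequences are extracted through the normalization trick $w_j=v_j/\|v_j\|_{\mathcal{X}^{1,2}(\Omega)}$ inside the proof of Lemma~\ref{Lem: eigen}, whereas your $\varepsilon$-bound gives them directly and also powers the final contradiction argument for $\lambda_k\to+\infty$. Both routes are valid; yours is somewhat more self-contained (it also writes out the passage from test functions in $P_{k+1}$ to all of $\mathcal{X}^{1,2}(\Omega)$, which the paper omits), while the paper's duality argument is shorter once one quotes the bounded-plus-a.e.-convergence criterion for weak $L^p$ convergence.
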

	To prove this proposition, we just show the following lemma, which is a first step to prove Proposition~\ref{Prop:eigen}. The rest is similar to that in \cite[Proposition 9]{SV13} and we omit it. 
\begin{Lemma}\label{Lem: eigen}
		Let $\mathcal{F}: \mathcal{X}^{1,2}(\Omega) \rightarrow \mathbb{R}$ be the functional defined as 
		\[\mathcal{F}(u)=\frac{1}{2} \left(\|u\|_{\mathcal{X}^{1,2}(\Omega)}^{2}+ \int_{ \Omega } a(x)u^{2}dx \right).\]
	    Let $X_*$ be a weakly closed non-trivial subspace of $\mathcal{X}^{1,2}(\Omega)$ and $\mathcal{M}_*:=\{u \in X_*:|u|_{2}=1\}$. Then there exists $u_* \in \mathcal{M}_*$ such that
		\begin{equation}\label{65}
			-\infty < \min _{u \in \mathcal{M}_*} \mathcal{F}(u)=\mathcal{F}\left(u_*\right),
		\end{equation}	
		and 
		\begin{equation}\label{61}
			B_{s}(u_{*},\phi)+ \int_{ \Omega } a(x)u_{*}\phi=\lambda_*\int_{\Omega}u_*(x)\phi(x)dx, \quad \forall \phi \in X_*,
		\end{equation}
	    where $\lambda_*= 2 \mathcal{F}(u_{*})$.
\end{Lemma}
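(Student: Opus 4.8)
The plan is to run a standard direct-method plus Lagrange-multiplier argument on the constrained minimization of $\mathcal{F}$ over $\mathcal{M}_*$. First I would verify that the infimum $m_* := \inf_{u \in \mathcal{M}_*} \mathcal{F}(u)$ is finite, in fact bounded below. Using $\|u\|_{\mathcal{X}^{1,2}(\Omega)} \simeq \|\nabla u\|_{L^2(\Omega)}$ (Proposition~\ref{Prop: X12 equivalent characterization}) together with the continuous embedding $\mathcal{X}^{1,2}(\Omega) \hookrightarrow L^{2^*}(\Omega)$ and the growth assumption \eqref{Assumption of a} on $a$, one estimates $\left| \int_\Omega a(x) u^2 \, dx \right|$ by H\"older's inequality: for $n \geqslant 3$ write $\int_\Omega a u^2 \leqslant |a|_{l/2}\, |u|_{2^*}^{2\theta}\,|u|_2^{2(1-\theta)}$ for a suitable $\theta = \theta(n,l) \in (0,1]$ (and the easier $n=1,2$ cases via \eqref{imbedding when n=1,2}), then absorb the $|u|_{2^*}$ factor into $\|\nabla u\|_{L^2}^2$ via Young's inequality with a small parameter, using $|u|_2 = 1$ on $\mathcal{M}_*$. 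This yields $\mathcal{F}(u) \geqslant \tfrac14 \|u\|_{\mathcal{X}^{1,2}(\Omega)}^2 - C$ on $\mathcal{M}_*$, so $m_* > -\infty$ and any minimizing sequence is bounded in $\mathcal{X}^{1,2}(\Omega)$.

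Next I would extract a weak limit. Since $\mathcal{X}^{1,2}(\Omega)$ is a Hilbert space, a bounded minimizing sequence $\{u_k\} \subset \mathcal{M}_*$ has a subsequence with $u_k \rightharpoonup u_*$ weakly in $\mathcal{X}^{1,2}(\Omega)$; because $X_*$ is weakly closed, $u_* \in X_*$. By the compact embedding $\mathcal{X}^{1,2}(\Omega) \hookrightarrow L^m(\Omega)$ for $m \in [1,2^*)$ (Proposition~\ref{Prop: continuous and compact embedding}), we get $u_k \to u_*$ strongly in $L^2(\Omega)$, hence $|u_*|_2 = 1$ and $u_* \in \mathcal{M}_*$ (in particular $u_* \not\equiv 0$). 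The quadratic form $u \mapsto \|u\|_{\mathcal{X}^{1,2}(\Omega)}^2$ is weakly lower semicontinuous, and strong $L^2$-convergence plus the H\"older/interpolation control above show $\int_\Omega a(x) u_k^2 \, dx \to \int_\Omega a(x) u_*^2 \, dx$ (splitting the domain where $|a|$ is large using absolute continuity of the integral, or directly by the interpolation estimate with the $L^{2^*}$-norms uniformly bounded). Combining, $\mathcal{F}(u_*) \leqslant \liminf_k \mathcal{F}(u_k) = m_*$, so $\mathcal{F}(u_*) = m_*$, which is \eqref{65}.

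Finally I would derive the Euler--Lagrange equation \eqref{61}. The constraint functional $G(u) := |u|_2^2 - 1$ is $C^1$ on $\mathcal{X}^{1,2}(\Omega)$ with $\langle G'(u_*), \phi \rangle = 2\int_\Omega u_* \phi\, dx$, which is nonzero since $u_* \not\equiv 0$, so $u_*$ is a regular point of the constraint. As $\mathcal{F} \in C^1$ with $\langle \mathcal{F}'(u_*), \phi \rangle = B_s(u_*,\phi) + \int_\Omega a(x) u_* \phi \, dx$, the Lagrange multiplier rule on the Hilbert manifold $\mathcal{M}_*$ (restricting test functions to the subspace $X_*$, which is itself a closed subspace so the multiplier rule applies verbatim there) gives a constant $\mu$ with $\langle \mathcal{F}'(u_*), \phi \rangle = \mu \int_\Omega u_* \phi \, dx$ for all $\phi \in X_*$; testing with $\phi = u_*$ and using $|u_*|_2 = 1$ yields $\mu = 2\mathcal{F}(u_*) =: \lambda_*$, which is exactly \eqref{61}.

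The main obstacle is the term $\int_\Omega a(x) u^2 \, dx$ under the merely $L^{l/2}$ (or $L^1$, $L^r$) integrability of $a$: one must be careful that this term is both well-defined and finite on $\mathcal{X}^{1,2}(\Omega)$, bounded below after absorption into the Sobolev norm, and \emph{continuous} along the weakly convergent minimizing sequence. All three points hinge on the same H\"older--interpolation estimate combining the strong $L^2$-convergence from compactness with the uniform $L^{2^*}$-bound from boundedness in $\mathcal{X}^{1,2}(\Omega)$; once that estimate is set up cleanly, the coercivity, lower semicontinuity, and continuity all follow, and the rest of the argument is routine.
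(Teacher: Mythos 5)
Your overall strategy (direct method on the constraint set, then the Lagrange multiplier rule with $\mu=2\mathcal{F}(u_*)$ identified by testing with $u_*$) matches the paper's in substance, but one step fails as written at the endpoint allowed by \eqref{Assumption of a}. When $n\geqslant 3$ and $l=n$ (i.e.\ $a\in L^{n/2}(\Omega)$, which the hypothesis permits), your interpolation exponent is $\theta=1$: H\"older gives only $\int_\Omega |a|u^2\,dx\leqslant |a|_{n/2}\,|u|_{2^*}^{2}\leqslant C|a|_{n/2}\,\|u\|_{\mathcal{X}^{1,2}(\Omega)}^{2}$, with no spare factor of $|u|_2$ to feed into Young's inequality. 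Since $|a|_{n/2}$ is a fixed, possibly large constant, there is no ``small parameter'' to absorb into $\|\nabla u\|_{L^2}^2$, so the claimed bound $\mathcal{F}(u)\geqslant \tfrac14\|u\|_{\mathcal{X}^{1,2}(\Omega)}^2-C$ on $\mathcal{M}_*$ does not follow from your argument in this borderline case; the same endpoint defect affects the ``directly by the interpolation estimate'' variant of your continuity claim for $\int_\Omega a u_k^2\,dx$, because $u_k\to u_*$ only weakly in $L^{2^*}$. The repair is standard and is in fact the idea you mention parenthetically for the continuity step: split $a=a\chi_{\{|a|>M\}}+a\chi_{\{|a|\leqslant M\}}$ with $M$ so large that $|a\chi_{\{|a|>M\}}|_{n/2}$ is small (absolute continuity of the integral), treat the bounded part with $|u|_2=1$ (coercivity) or with the strong $L^2$-convergence (continuity). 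With that insertion both the coercivity and the passage to the limit in $\int_\Omega a u_k^2\,dx$ go through, and the rest of your argument is sound.

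For comparison, the paper sidesteps coercivity estimates entirely: it first proves that $\mathcal{G}(u)=\int_\Omega a u^2\,dx$ is weakly continuous (Lemma~\ref{Lemma: weakly continuous}) via the duality pairing $L^{n/2}$--$L^{n/(n-2)}$ (using that $u_j^2\rightharpoonup u_*^2$ in $L^{n/(n-2)}$), which handles $l=n$ cleanly, then gets finiteness of the infimum by normalizing in the $\mathcal{X}^{1,2}$-norm, and existence of the minimizer by Fatou plus that lemma; finally it derives \eqref{61} by the explicit perturbation $u_\varepsilon=(u_*+\varepsilon v)/|u_*+\varepsilon v|_2$ rather than invoking the abstract Lagrange multiplier rule. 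Your abstract multiplier argument on the closed subspace $X_*$ (with $\langle G'(u_*),u_*\rangle=2\neq 0$) is a legitimate, slightly slicker alternative to that computation; the only substantive issue is the endpoint case above.
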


In order to prove Lemma~\ref{Lem: eigen}, we need first to gain a weak continuous property of the map  $\mathcal{G} : u \in \mathcal { X }^ { 1 , 2 } ( \Omega )   \mapsto  \int _ { \Omega } a ( x ) u ^ { 2 } d x$.
That is, 
\begin{Lemma}\label{Lemma: weakly continuous}
	If $a $ satisfies \eqref{Assumption of a}, then the map $\mathcal{G}$ 
	is weakly continuous. 
\end{Lemma}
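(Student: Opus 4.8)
The plan is to show that if $u_m \rightharpoonup u$ weakly in $\mathcal{X}^{1,2}(\Omega)$, then $\mathcal{G}(u_m) = \int_\Omega a(x) u_m^2\, dx \to \int_\Omega a(x) u^2\, dx = \mathcal{G}(u)$. The starting point is Proposition \ref{Prop: continuous and compact embedding}: since the embedding $\mathcal{X}^{1,2}(\Omega) \hookrightarrow L^m(\Omega)$ is compact for every $m \in [1, 2^*)$, weak convergence $u_m \rightharpoonup u$ in $\mathcal{X}^{1,2}(\Omega)$ upgrades to strong convergence $u_m \to u$ in $L^m(\Omega)$ for all such $m$. In particular $u_m^2 \to u^2$ strongly in $L^{m/2}(\Omega)$ for all $m \in [1, 2^*)$, i.e. $u_m^2 \to u^2$ in $L^p(\Omega)$ for all $p \in [1, 2^*/2)$. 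The whole point is then to pair this against $a(x)$ via Hölder's inequality, choosing the conjugate exponent according to the three cases in \eqref{Assumption of a}.

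Concretely, I would estimate
\[
\left| \int_\Omega a(x)\big(u_m^2 - u^2\big)\, dx \right| \leqslant \|a\|_{L^r(\Omega)} \, \big\| u_m^2 - u^2 \big\|_{L^{r'}(\Omega)},
\]
where $r'$ is the Hölder conjugate of the integrability exponent of $a$. For $n \geqslant 3$ with $a \in L^{l/2}(\Omega)$, $l \geqslant n$, the conjugate exponent is $\frac{l/2}{l/2 - 1} = \frac{l}{l-2}$, and one checks $\frac{l}{l-2} < \frac{n}{n-2} = 2^*/2$ precisely because $l \geqslant n$; hence $u_m^2 \to u^2$ in $L^{l/(l-2)}(\Omega)$ by the compact-embedding step, and the right-hand side tends to $0$. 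For $n = 2$, $a \in L^r(\Omega)$ with $r > 1$ has conjugate $r' < \infty$, and by \eqref{imbedding when n=1,2} we have $\mathcal{X}^{1,2}(\Omega) \hookrightarrow L^p(\Omega)$ for all finite $p$ with the embedding compact for finite $p$, so $u_m^2 \to u^2$ in $L^{r'}(\Omega)$ and again the bound goes to $0$. For $n = 1$, $a \in L^1(\Omega)$ and, by \eqref{imbedding when n=1,2}, $\mathcal{X}^{1,2}(\Omega) \hookrightarrow L^\infty(\Omega)$; moreover bounded sequences in $\mathcal{X}^{1,2}(\Omega)$ converge uniformly (via compactness of $H^1 \hookrightarrow C^{0}$ in dimension one), so $\|u_m^2 - u^2\|_{L^\infty(\Omega)} \to 0$ and $|\int_\Omega a(u_m^2 - u^2)| \leqslant \|a\|_{L^1(\Omega)} \|u_m^2 - u^2\|_{L^\infty(\Omega)} \to 0$. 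In all three cases $\mathcal{G}(u_m) \to \mathcal{G}(u)$, which is the claim.

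The only mildly delicate point — the main obstacle, such as it is — is verifying the strict inequality $\frac{l}{l-2} < \frac{2^*}{2}$ in the case $n \geqslant 3$, since the compact embedding in Proposition \ref{Prop: continuous and compact embedding} is available only for exponents strictly below $2^*$ (equivalently, $L^p$ with $p < 2^*/2$ after squaring); the borderline case $l = n$ must still land strictly inside the compact range, and it does because $\frac{n}{n-2} = \frac{2^*}{2}$ is approached but the conjugate exponent $\frac{l}{l-2}$ is strictly less than $\frac{n}{n-2}$ whenever $l > n - 2$, in particular for $l \geqslant n$. Everything else is a routine application of Hölder's inequality together with the compact embeddings already recorded, and no new machinery is needed.
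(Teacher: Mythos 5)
Your argument is fine for $n=1$, $n=2$, and for $n\geqslant 3$ with $l>n$, but it has a genuine gap exactly at the borderline case $l=n$, which the hypothesis \eqref{Assumption of a} explicitly allows. The inequality you rely on is false there: a direct computation gives $\frac{l}{l-2}<\frac{n}{n-2}$ if and only if $l>n$, and for $l=n$ one has $\frac{l}{l-2}=\frac{n}{n-2}=\frac{2^{*}}{2}$ exactly (your claim that strictness holds ``whenever $l>n-2$, in particular for $l\geqslant n$'' is incorrect; for $n-2<l<n$ the inequality even reverses). So when $a\in L^{n/2}(\Omega)$ your Hölder pairing requires strong convergence of $u_m^2-u^2$ in $L^{2^{*}/2}(\Omega)$, which is precisely the critical exponent where Proposition~\ref{Prop: continuous and compact embedding} gives only a continuous, not compact, embedding; the compact-embedding step you invoke does not deliver it, and indeed strong convergence $u_m^2\to u^2$ in $L^{2^{*}/2}$ can fail for a weakly convergent sequence (concentration at the critical exponent).

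The paper closes this case by a different mechanism: since $(u_m)$ is bounded in $L^{2^{*}}(\Omega)$, the squares $(u_m^2)$ are bounded in $L^{n/(n-2)}(\Omega)$, and combined with a.e.\ convergence (along subsequences, then a subsequence–of–subsequence argument for the full sequence) this yields the \emph{weak} convergence $u_m^2\rightharpoonup u^2$ in $L^{n/(n-2)}(\Omega)$. Testing this weak convergence against the fixed function $a\in L^{n/2}(\Omega)$, which lies in the dual of $L^{n/(n-2)}(\Omega)$, gives $\mathcal{G}(u_m)\to\mathcal{G}(u)$ without ever needing strong convergence at the critical exponent. To repair your proof, keep your argument for $n=1,2$ and for $l>n$, and replace the $l=n$ case by this duality/weak-convergence step (or simply run the paper's argument for all $l\geqslant n$, since boundedness in $L^{n/(n-2)}$ only uses $a\in L^{n/2}$ after Hölder on the bounded domain).
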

\begin{proof}
	Observe the embedding property \eqref{imbedding when n=1,2}, it is enough to focus on the case when the dimension $n \geqslant 3$. Thanks to $\mathcal{X}^{1,2} (\Omega) \hookrightarrow L^{2^{*}}(\Omega) $ and H\"{o}lder inequalities, 
	%		\[\int _ { \Omega } a ( x ) u ^ { 2 } d x \leqslant |a(x)|_{n/2}|u^2|_{2^* / 2}=|a(x)|_{n/2} |u|_{2^{*}}^{2}. \]
	%		Hence, 
	the map $\mathcal{G}$ is well defined.
	
	Assume that $u _ { j } \rightharpoonup u$ in $\mathcal { X } ^ { 1 , 2 }(\Omega)$ and consider an arbitrary subsequence $(v_{j})$ of $(u_{j})$. Since $\mathcal { X } ^ { 1 , 2 }(\Omega)\hookrightarrow \hookrightarrow L^{2}(\Omega)$,  going if necessary to a subsequence, we have
	\begin{equation*}
		\begin{aligned}
			v_{j} \rightarrow u  \text{ in } L^{2}(\Omega) \quad \text{ and } \quad 
			v _ { j } \rightarrow u \text{ a.e. on } \Omega
		\end{aligned}
	\end{equation*}
	as $j \rightarrow \infty$ and there exists $h \in L^{2}(\Omega)$ such that 
	\[ |v_{j}(x)| \leqslant h(x) \quad \text{ a.e. in } \mathbb{R}^{n} \text{ for any } j\in \mathbb{N}.\]
	Since $( v _ { j } ) \subset \mathcal{X}^{1,2} (\Omega)$ is bounded in $L ^ { 2 ^ { * } }(\Omega)$, $ ( v _ {j} ^ { 2 } )$ is bounded in $L ^ { n / ( n - 2 ) }(\Omega)$. Hence  $v _ { j } ^ { 2 } \rightharpoonup u ^ { 2 }$ in Hilbert space $L ^ { n / ( n - 2 ) }(\Omega)$. Noticing that the dual space of $L^{n/2}(\Omega)$ is $L ^ { n / ( n - 2 ) }$, we have 
	$\mathcal{G}(v_{j}) \rightarrow \mathcal{G}(u) $ by the \textit{Dominated Convergence Theorem}.
	%	\[\int a ( x ) v _ { n } ^ { 2 } d x \rightarrow \int a ( x ) u ^ { 2 } d x .\]
%	We have thus proved the Lemma by the arbitrary of $( v _ { j } )$.
\end{proof}
\begin{proof}[\bf{Proof of Lemma~\ref{Lem: eigen}}]
		Consider a minimizing sequence  $(v_{j}) \subset \mathcal{X}_{*}:$
		\[ \frac{\|v_{j}\|_{\mathcal{X}^{1,2}(\Omega)}^{2}+\int_{ \Omega } a(x)v_{j}^{2}dx}{2|v_{j}|_{2}} \rightarrow \inf _{\mathcal{M}_*}\mathcal{F}(u) \quad \text{ as } j\rightarrow \infty. \]
		Let $w_{j}=\frac{v_{j}}{\|v_{j}\|_{\mathcal{X}^{1,2}(\Omega)}}$, then $\|w_{j}\|_{\mathcal{X}^{1,2}(\Omega)} = 1$ and 
		\[ \frac{1+\mathcal{G}(w_{j})}{2|w_{j}|_{2}} \rightarrow \inf _{\mathcal{M}_*}\mathcal{F}(u) \quad \text{ as } j\rightarrow \infty.\]
		Since $(w_{j})$ is bounded in $\mathcal{X}^{1,2}(\Omega)$, up to a subsequence, still defined by $(w_{j})$, there exists $w \in \mathcal{X}^{1,2}(\Omega)$ such that 
		\[	w_{j} \rightharpoonup w \text { in } \mathcal{X}^{1,2}(\Omega) \text{ and } w_{j} \rightarrow w \text { in } L^{2}(\Omega) .\]
	    It follows from Lemma~\ref{Lemma: weakly continuous} that $\mathcal{G}(w_{j}) \rightarrow \mathcal{G}(w)$. Since $w \neq 0$,
	    \[\inf _{\mathcal{M}_*}\mathcal{F}(u) = \lim_{j\rightarrow \infty} \frac{1+ \mathcal{G}(w_{j})}{|w_{j}|_{2}} \geqslant  \frac{1+ \mathcal{G}(w)}{|w|_{2}} > -\infty.\]
		
		Let $u_{j}=\frac{w_{j}}{|w_{j}|_{2}} \in \mathcal{M}_{*}$. Since $(u_{j})$ is bounded in $\mathcal{X}^{1,2}(\Omega)$, up to a subsequence, still defined by $(u_{j})$, there exists $u_{*} \in \mathcal{M}_*$ such that
		\[	u_{j} \rightharpoonup u_{*} \text { in } \mathcal{X}^{1,2}(\Omega) \text{ and } u_{j} \rightarrow u_{*} \text { in } L^{2}(\Omega) .\] 
		According to \textit{Fatou Lemma} and Lemma~\ref{Lemma: weakly continuous}, we deduce that
		\[\inf _{u \in \mathcal{M}_*} \mathcal{F}(u) = \lim \limits_{j \rightarrow \infty} \mathcal{F}(u_{j}) \geqslant
		\mathcal{F}\left(u_*\right) \geqslant \inf _{u \in \mathcal{M}_*} \mathcal{F}(u),\]
		which implies \eqref{65}. 
		
		Since $u_{*}$ is a constrained minimizer of the functional $\mathcal{F}$, by the \textit{Lagrange Multiplier Rule}, \eqref{61} is verified. Moreover,  $\lambda_*= 2 \mathcal{F}(u_{*})$.
		In fact, let $\varepsilon \in(-1,1), v \in X_*, u_{\varepsilon}=\frac{u_{*}+\varepsilon v}{|u_{*}+\varepsilon v|_2}$, then $u_{\varepsilon} \in \mathcal{M}_*$ and 
		\[\begin{aligned}
			& \quad 2\mathcal{F}\left(u_{\varepsilon}\right)= B_{a_s}( u_{\varepsilon},u_{\varepsilon}) =B_{s}( u_{\varepsilon},u_{\varepsilon}) + \int_{\Omega}a(x)u_{\varepsilon}^{2}~ dx\\
			& =\frac{ 1-2 \varepsilon \int_{\Omega} u_{*} v d x+\varepsilon^{2}|v|_{2}^{2}}{1-4 \varepsilon^{2}\left(\int_{\Omega} u_{*} v d x\right)^{2}+2 \varepsilon^{2}|v|_{2}^{2}+\varepsilon^{4}|v|_{2}^{4}} \\
			&\quad \cdot \left( \| u_{*} \|_{\mathcal { X } ^ { 1 , 2 } ( \Omega )}^{2}+ \int_{\Omega} a(x)u_{*}^{2}+2 \varepsilon  B_{a_s}( u_{*}, v) +\varepsilon^{2} (\| v \|_{\mathcal { X } ^ { 1 , 2 } ( \Omega )}^{2} + \int_{\Omega} a(x) v^{2}) \right)\\
			& \leqslant  \frac{1}{\left(1-\varepsilon^{2}|v|_{2}^{2}\right)^{2}}\left(2 \mathcal{F}\left(u_{*}\right)+2 \varepsilon (B_{a_s}(u_{*},v) -2 \mathcal{F}\left(u_{*}\right) \int_{\Omega} u_{*} v )+o(\varepsilon)\right),
		\end{aligned}\]
		where the last inequality is from $\int_{\Omega} u_* v dx \leqslant \left| u_*\right|_2|v|_2=|v|_{2}.$ Here we denote $B_{a_s}(u,v):= B_{s}(u,v) + (a(x)u, v)_{L^{2} (\Omega)}$. 
		
	    The minimality of $u_*$ implies \eqref{61}. 
\end{proof}

We now give some notations. For any $k \in \mathbb{N}$, we define
\begin{equation}\label{space:YZ}
	Y_{k}:=\operatorname{span}\left\{e_{1}, \ldots, e_{k}\right\},\quad 
	Z_{k}:=\overline{\operatorname{span}\left\{e_{k}, e_{k+1}, \ldots\right\}} 
\end{equation}
where $e_{i}$ is the eigenfunction corresponding to $\lambda_{i}$, attaining the minimum in \eqref{eigenvalue}, that is
\begin{equation}\label{9}
	|e_{i}|_{2}=1 \quad \text{ and } \quad \|e_{i}\|_{\mathcal{X}^{1,2}(\Omega)}^{2}+\int_{ \Omega }a ( x ) e_{i} ^ { 2 }dx= \lambda_{i}.
\end{equation}
Since $Y_{k}$ is finite-dimensional, all norms on $Y_{k}$ are equivalent. Therefore, there exist two positive constants $C_{k, q}$ and $\tilde{C}_{k, q}$, depending on $k$ and $q$, such that for any $u \in Y_{k}$
\begin{equation}\label{finite dim eq norm}
	C_{k, q}\|u\|_{\mathcal{X}^{1,2}(\Omega)} \leqslant\|u\|_{L^q(\Omega)} \leqslant \tilde{C}_{k, q}\|u\|_{\mathcal{X}^{1,2}(\Omega)} .
\end{equation}

\section{Global existence and multiplicity results}\label{Sec: Global existence results} \setcounter{equation}{0}
In this section, we apply Linking Theorem and Mountain Pass Theorem   for  $\lambda_{1} \leqslant 0$ and $\lambda_{1} > 0$ respectively, to show the existence of a non-trivial weak solution of equation \eqref{problem}.
To use variational methods, the functional $\mathcal{J}$ is required to satisfy a suitable geometric structure and some compactness condition such as Palais-Smale compactness condition (i.e., every Palais-Smale sequence of $\mathcal{J}$ has a convergent subsequence).

To obtain  the $(PS)_{c}$ condition of $\mathcal{J}$, we first give the following lemma.

\begin{Lemma}\label{Lemma: minimax A2}
	\cite[Theorem A.2]{Willembook}
	Assume that $|\Omega| < \infty$, $1 \leqslant p,r < \infty$, $ f \in \mathcal{C}(\bar{\Omega} \times \mathbb{R})$ and $ |f(x,u)| \leqslant c(1 + |u|^{p/r})$. Then, for every $u \in L^{p}(\Omega)$, $f(\cdot,u) \in L^{r}(\Omega)$ and the operator
	\[A:~ L^{p}(\Omega) \rightarrow L^{r}(\Omega), \  u \longmapsto f(x,u)\]
	is continuous.
\end{Lemma}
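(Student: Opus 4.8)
The plan is to follow the classical Krasnoselskii argument for Nemytskii (superposition) operators. First I would check that $A$ is well defined: for $u \in L^p(\Omega)$ the function $x \mapsto f(x,u(x))$ is measurable, since $f$ is a continuous (hence Carath\'eodory) function and $u$ is measurable, and the growth bound together with the elementary inequality $(a+b)^r \leqslant 2^{r-1}(a^r+b^r)$ for $r \geqslant 1$ yields
\[
|f(x,u(x))|^r \leqslant c^r\bigl(1+|u(x)|^{p/r}\bigr)^r \leqslant 2^{r-1}c^r\bigl(1+|u(x)|^{p}\bigr),
\]
which is integrable over $\Omega$ because $u \in L^p(\Omega)$ and $|\Omega| < \infty$. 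Hence $f(\cdot,u) \in L^r(\Omega)$, and $A$ maps $L^p(\Omega)$ into $L^r(\Omega)$.

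For continuity, let $u_k \to u$ in $L^p(\Omega)$. I would argue via the standard subsequence principle: it suffices to prove that every subsequence of $(A(u_k))$ possesses a further subsequence converging to $A(u)$ in $L^r(\Omega)$. Fix such a subsequence; it still converges to $u$ in $L^p(\Omega)$, so after relabelling I may assume (by the usual companion to the dominated convergence theorem) that $u_k \to u$ a.e.\ in $\Omega$ and that there is a dominating function $g \in L^p(\Omega)$ with $|u_k| \leqslant g$ a.e.\ for all $k$. Continuity of $f$ gives $f(x,u_k(x)) \to f(x,u(x))$ for a.e.\ $x \in \Omega$, while
\[
|f(x,u_k(x)) - f(x,u(x))|^r \leqslant 2^{r-1}\bigl(|f(x,u_k(x))|^r + |f(x,u(x))|^r\bigr) \leqslant C\bigl(1+g(x)^{p}\bigr) \in L^1(\Omega).
\]
The dominated convergence theorem then yields $\int_\Omega |f(x,u_k) - f(x,u)|^r\,dx \to 0$, i.e.\ $A(u_k) \to A(u)$ in $L^r(\Omega)$. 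Since the limit $A(u)$ is independent of the chosen subsequence, the full sequence $A(u_k)$ converges to $A(u)$, which establishes the continuity of $A$.

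The proof is largely routine; the two points that require care are (i) the measurability of the composition $x \mapsto f(x,u(x))$ for merely measurable $u$, which is settled by approximating $u$ by simple functions and invoking the continuity of $f$ (or simply citing that a continuous $f$ is Carath\'eodory), and (ii) the extraction along a subsequence of an $L^p$-dominating function $g$, which is precisely the step that genuinely uses convergence in the $L^p$-norm rather than mere a.e.\ convergence. I would emphasize that the subsequence trick is essential here: without the domination, a.e.\ convergence of $u_k$ alone does not force $A(u_k) \to A(u)$ in $L^r$, so the argument must pass through the norm convergence hypothesis.
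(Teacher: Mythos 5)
Your proof is correct, and it is precisely the classical Krasnoselskii argument for continuity of the Nemytskii operator that underlies the cited Theorem A.2 in Willem's book; the paper itself offers no proof, quoting the result directly from that reference. The two places you flag as needing care — measurability of the composition via the Carath\'eodory property, and extraction of a subsequence that converges a.e.\ together with an $L^p$-dominant — are exactly the right points, and you handle both correctly, so there is nothing to add.
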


\begin{Proposition}\label{Prop:PSc}
	Let $a(x)$ satisfy \eqref{Assumption of a}, $f(x,t)$ satisfy (C), (H1), (H3)-(H4). Then
	\begin{itemize}
		\item [$(a)$] every Palais-Smale sequence of $\mathcal{J}$ is bounded in $\mathcal{X}^{1,2}(\Omega)$;
		\item [$(b)$] every Palais-Smale sequence of $\mathcal{J}$ has a convergent subsequence in $\mathcal{X}^{1,2}(\Omega)$.	
	\end{itemize}
\end{Proposition}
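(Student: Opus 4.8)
The argument splits into the two asserted parts. For part $(a)$, let $(u_m)$ be a Palais-Smale sequence at level $c$, so that $\mathcal{J}(u_m) \to c$ and $\mathcal{J}'(u_m) \to 0$ in the dual of $\mathcal{X}^{1,2}(\Omega)$. Arguing by contradiction, I would assume $\|u_m\|_{\mathcal{X}^{1,2}(\Omega)} \to \infty$ along a subsequence and set $v_m := u_m / \|u_m\|_{\mathcal{X}^{1,2}(\Omega)}$, so $\|v_m\|_{\mathcal{X}^{1,2}(\Omega)} = 1$. Passing to a further subsequence, $v_m \rightharpoonup v$ in $\mathcal{X}^{1,2}(\Omega)$, $v_m \to v$ in $L^m(\Omega)$ for $m \in [1,2^*)$ by Proposition~\ref{Prop: continuous and compact embedding}, and $v_m \to v$ a.e. Here is where (H3)–(H4) enter instead of (AR): the standard dichotomy is that either $v \equiv 0$ or $v \not\equiv 0$. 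If $v \not\equiv 0$, then on the set $\{v \neq 0\}$ one has $|u_m(x)| \to \infty$, so by (H3) $F(x,u_m)/u_m^2 \to +\infty$ there, and the Fatou-type estimate
\[
\int_\Omega \frac{F(x,u_m)}{\|u_m\|_{\mathcal{X}^{1,2}(\Omega)}^2}\,dx = \int_\Omega \frac{F(x,u_m)}{u_m^2}\, v_m^2 \, dx \to +\infty,
\]
contradicts $\mathcal{J}(u_m)/\|u_m\|_{\mathcal{X}^{1,2}(\Omega)}^2 \to 0$ (after controlling the quadratic terms $\tfrac12\|v_m\|^2 + \tfrac12\int a v_m^2$, the latter converging by Lemma~\ref{Lemma: weakly continuous}). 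If $v \equiv 0$, I would use the monotonicity condition (H4): testing with $\theta_m u_m$ for a well-chosen $\theta_m \in [0,1]$ (the usual Jeanjean-type device) and using that $t \mapsto f(x,t)/t$ is monotone outside $[-T_0,T_0]$, one shows that the rescaled functional $\mathcal{J}(\theta_m u_m)$ cannot both stay bounded and have the right derivative behavior, again a contradiction. This last sub-case is the part requiring the most care and is the main obstacle in $(a)$; the interplay between the bounded region $|t| \le T_0$ (where $f(x,t)$ is bounded by (H1)/(C)) and the monotone region must be handled by splitting the integrals accordingly.

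For part $(b)$, once boundedness is known, extract $u_m \rightharpoonup u$ in $\mathcal{X}^{1,2}(\Omega)$, with $u_m \to u$ in $L^q(\Omega)$ (note $q \in (2,2^*)$, so this is the compact range), $u_m \to u$ in $L^2(\Omega)$, and $u_m \to u$ a.e. I would then test $\mathcal{J}'(u_m) \to 0$ against $u_m - u$:
\[
\langle \mathcal{J}'(u_m), u_m - u \rangle = B_s(u_m, u_m - u) + (a(x)u_m, u_m - u)_{L^2(\Omega)} - \int_\Omega f(x,u_m)(u_m - u)\, dx \to 0.
\]
The term $(a(x)u_m, u_m-u)_{L^2}$ tends to zero: by the growth \eqref{Assumption of a} of $a$ and Hölder, $a(x)u_m$ is bounded in a suitable $L^{(2^*)'}$-type space while $u_m - u \to 0$ in the dual Lebesgue space (this is exactly the estimate already used in the remark after Definition~\ref{def:weak solution} and in Lemma~\ref{Lemma: weakly continuous}). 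The nonlinear term $\int_\Omega f(x,u_m)(u_m-u)\,dx \to 0$ follows from (H1) together with Lemma~\ref{Lemma: minimax A2}: the Nemytskii operator $u \mapsto f(\cdot,u)$ is continuous from $L^q(\Omega)$ into $L^{q'}(\Omega)$ (taking $p = q$, $r = q'$, noting $q/q' = q-1$ matches the exponent in (H1)), so $f(\cdot,u_m) \to f(\cdot,u)$ in $L^{q'}(\Omega)$ while $u_m - u \to 0$ in $L^q(\Omega)$, and the pairing vanishes. Consequently $B_s(u_m, u_m - u) \to 0$. Since also $B_s(u, u_m - u) \to 0$ by weak convergence, subtracting gives $B_s(u_m - u, u_m - u) = \|u_m - u\|_{\mathcal{X}^{1,2}(\Omega)}^2 \to 0$, i.e. $u_m \to u$ strongly, which is the claim.

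I expect part $(a)$, specifically the $v \equiv 0$ sub-case, to be the delicate point, since without (AR) one cannot directly bound $\mu F(x,t) - t f(x,t)$ from one side; the replacement argument via (H4) and a cleverly chosen scaling parameter is the technical heart of the proposition. Part $(b)$ is comparatively routine once the compact embeddings and the continuity of the Nemytskii operator (Lemma~\ref{Lemma: minimax A2}) are invoked, with the only mild subtlety being the handling of the $a(x)$-term under the weak integrability assumption \eqref{Assumption of a}.
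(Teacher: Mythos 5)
Your overall route is essentially the paper's. For part (a) you expand the standard contraposition argument (normalization $v_m=u_m/\|u_m\|_{\mathcal{X}^{1,2}(\Omega)}$, the dichotomy $v\equiv0$ versus $v\not\equiv0$, Fatou with (H3), and the Jeanjean-type scaling device with (H4)); the paper itself only observes that $f_a(x,t)=-a(x)t+f(x,t)$ still satisfies (H3)--(H4) and cites the standard argument of \cite{SVWZ24}, so your sketch is a reasonable filling-in of that step. For part (b) your scheme is the same weak-limit argument with the Nemytskii continuity of Lemma~\ref{Lemma: minimax A2}, merely arranged as $\langle\mathcal J'(u_m),u_m-u\rangle\to0$ plus $B_s(u,u_m-u)\to0$, instead of the paper's identity built from $\langle\mathcal J'(u_j)-\mathcal J'(u_\infty),u_j-u_\infty\rangle$.

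The one step that fails as written is your treatment of $(a(x)u_m,u_m-u)_{L^2(\Omega)}$. You claim $a(x)u_m$ is bounded in $L^{(2^*)'}(\Omega)$ while $u_m-u\to0$ strongly in the dual space, i.e.\ in $L^{2^*}(\Omega)$; but the embedding $\mathcal X^{1,2}(\Omega)\subset L^{2^*}(\Omega)$ is only continuous, not compact (Proposition~\ref{Prop: continuous and compact embedding}), so in the borderline case permitted by \eqref{Assumption of a}, namely $a\in L^{n/2}(\Omega)$ with $n\geqslant3$, you only have $u_m-u\rightharpoonup0$ in $L^{2^*}(\Omega)$ and H\"older gives no decay. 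The conclusion of the step is still true, but it requires the weak-convergence mechanism rather than strong convergence: either apply Lemma~\ref{Lemma: weakly continuous} to $u_m-u\rightharpoonup0$, which gives $\int_\Omega a(x)(u_m-u)^2\,dx\to0$ (this is exactly how the paper organizes its identity), or split $\int_\Omega a\,u_m(u_m-u)\,dx=\mathcal G(u_m)-\int_\Omega a\,u\,u_m\,dx$ and use $\mathcal G(u_m)\to\mathcal G(u)$ together with $au\in L^{(2^*)'}(\Omega)$ and $u_m\rightharpoonup u$ in $L^{2^*}(\Omega)$. Your strong-convergence shortcut is valid only when $l>n$ strictly (or $n\leqslant2$), since then the conjugate exponent lies in the compact range; with the repair above, your proof of (b) matches the paper's.
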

\begin{proof} 
	We just consider the case $n \geqslant 3$.
	
	Note that once $f$ satisfies  (H3)-(H4), $f_{a}(x,t) := -a(x) t +f(x,t)$ also satisfies (H3)-(H4). Then part $(a)$ follows from a standard contraposition argument(see e.g. \cite{SVWZ24}). 
	However, since $a \in L^{ \frac{l}{2} }(\Omega)$ may be unbounded, $f$ satisfying condition (H1) does not mean that $f_{a}$ satisfies condition (H1). We adopt a method different from the proof in \cite{SVWZ24} to demonstrate part $(b)$.
	
	Let $(u_{j})$ be a bounded Palais-Smale sequence in $\mathcal{X}^{1,2}(\Omega)$ such that 
	\begin{equation}\label{T' convergent to 0-2}
		\langle \mathcal{J}^{\prime}(u_{j}), \varphi \rangle \rightarrow 0,\quad  \forall \varphi \in \mathcal{X}^{1,2}(\Omega)
	\end{equation} 
	as $j \rightarrow \infty$.
	Since  $\mathcal{X}^{1,2}(\Omega)$ is a Hilbert Space, up to a subsequence, still denoted by $(u_{j})$, there exists $u_{\infty} \in \mathcal{X}^{1,2}(\Omega)$ such that
	\[\begin{array}{ll}
		u_{j} \rightharpoonup u_{\infty}  \text { in } \mathcal{X}^{1,2}(\Omega) \quad \text{and} \quad 
		u_{j} \rightarrow u_{\infty}  \text { in } L^{q}(\Omega ),  q \in (2, 2^{*})
	\end{array}\]
	as $j \rightarrow+\infty$. 
	 
	Note $B_{s}(u,v)$ is bilinear. Observe that
	\[\begin{aligned}
		 \| u_{j} -u_{\infty} \|_{\mathcal{X}^{1,2}(\Omega)}^{2} = & ~\langle \mathcal{J}^{\prime}(u_{j}) - \mathcal{J}^{\prime}(u_{\infty}), u_{j}-u_{\infty} \rangle 
		 \\& + \int_{\Omega} (f(x,u_{j}) - f(x,u_{\infty}))(u_{j} - u_{\infty})~dx - \int_{\Omega} a(u_{j} - u_{\infty})^{2}~dx .
	\end{aligned}\]
	By (H1), for every $ u \in L^{q}(\Omega)$, 
	\[ |f(x,u)| \leqslant  c_{f} (1+ |u|^{q-1}) =  c_{f} (1+ |u|^{ \frac{q}{q/(q-1)} }) . \]
	Applying Lemma~\ref{Lemma: minimax A2}, we have $f(x,u_{\infty}) \in L^{q/(q-1)} (\Omega)$ and 
	\[ f(x,u_{j}) \rightarrow f(x,u_{\infty}) \quad \text{ in } L^{q/(q-1)}(\Omega), \]
	as $j \rightarrow \infty$. Thus, 
	\[ \int_{\Omega} (f(x,u_{j}) - f(x,u_{\infty}))(u_{j} - u_{\infty}) ~dx \leqslant |f(x,u_{j}) - f(x,u_{\infty})|_{\frac{q}{q-1}} |u_{j}-u_{\infty}|_{q} \rightarrow 0. \]
	Together with \eqref{T' convergent to 0-2} and Lemma~\ref{Lemma: weakly continuous}, we have 
	$\| u_{j} -u_{\infty} \|_{\mathcal{X}^{1,2}(\Omega)} \rightarrow 0$.  
\end{proof}
We now show that $\mathcal{J}$ indeed possesses suitable geometric structure.
\subsection{$\lambda_{1}  \leqslant  0$: Linking type solution}
Since $\lambda_{1}  \leqslant 0$, we put the number 0 between two adjacent unequal eigenvalues
\[ \lambda_{1} \leqslant \lambda_{2} \leqslant \cdots \leqslant \lambda_{k} \leqslant 0 < \lambda_{k+1} \leqslant \cdots \quad \text{ for some } k \in \mathbb{N},\]
where $\lambda_{k}$ is the $k$-th eigenvalue of the operator $\mathcal{L}_{a}$ defined in Proposition~\ref{Prop:eigen}.

\begin{Lemma}\label{Lemma: Zk+1 inf}
	If $a(x) $ satisfies \eqref{Assumption of a} and $\lambda_{k} \leqslant 0 < \lambda_{k+1} $, then 
	\[\varsigma_{k+1} :=  \inf _ {\substack{u \in Z_{k+1} \\\|u\|_{\mathcal{X}^{1,2}(\Omega)}=1}} \left\{ \|u\|_{\mathcal{X}^{1,2}(\Omega)}^{2} + \int_{\Omega} a(x)u^{2} dx \right\} >0.\]
\end{Lemma}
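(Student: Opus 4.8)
The plan is to use the variational characterization of the eigenvalues from Proposition~\ref{Prop:eigen} together with the fact that $Z_{k+1}$ is spanned by the eigenfunctions with index $\geqslant k+1$. First I would recall that for every $u \in Z_{k+1}$ we may expand $u = \sum_{i \geqslant k+1} c_i e_i$ in $L^2(\Omega)$, and that by orthogonality one has $\|u\|_{\mathcal{X}^{1,2}(\Omega)}^2 + \int_\Omega a(x)u^2\,dx = \sum_{i \geqslant k+1} \lambda_i c_i^2$ and $|u|_2^2 = \sum_{i \geqslant k+1} c_i^2$; hence
\[
\|u\|_{\mathcal{X}^{1,2}(\Omega)}^2 + \int_\Omega a(x)u^2\,dx \;\geqslant\; \lambda_{k+1} |u|_2^2 \qquad \text{for all } u \in Z_{k+1},
\]
which is just \eqref{eigenvalue} applied on the subspace $Z_{k+1} \subseteq P_{k+1}$. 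Strictly speaking, to avoid discussing convergence of the eigenfunction expansion in $\mathcal{X}^{1,2}(\Omega)$, it is cleanest to note $Z_{k+1} \subseteq P_{k+1} = \{u : \int_\Omega u e_j\,dx = 0,\ j=1,\dots,k\}$ and quote \eqref{eigenvalue} directly.

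Next I would feed this into the quantity defining $\varsigma_{k+1}$. For $u \in Z_{k+1}$ with $\|u\|_{\mathcal{X}^{1,2}(\Omega)} = 1$, write
\[
\|u\|_{\mathcal{X}^{1,2}(\Omega)}^2 + \int_\Omega a(x)u^2\,dx \;=\; 1 + \left( \|u\|_{\mathcal{X}^{1,2}(\Omega)}^2 + \int_\Omega a(x)u^2\,dx - 1 \right),
\]
so the point is to show the bracketed defect is bounded below by something strictly greater than $-1$. Using $\|u\|_{\mathcal{X}^{1,2}(\Omega)}^2 + \int_\Omega a(x)u^2\,dx \geqslant \lambda_{k+1}|u|_2^2$ together with the continuous embedding $\mathcal{X}^{1,2}(\Omega) \hookrightarrow L^2(\Omega)$ gives a bound in terms of $|u|_2$, but one must be careful: if $\lambda_{k+1} > 0$ then $\lambda_{k+1}|u|_2^2 > 0$, yet this alone does not immediately yield a uniform positive lower bound on the normalized functional, because $|u|_2$ could be small. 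The right move is instead a dichotomy on $\sigma := \|u\|_{L^2(\Omega)}$ over $Z_{k+1}$: by Proposition~\ref{Prop: continuous and compact embedding}, $|u|_2 \leqslant C_\Omega$ for $\|u\|_{\mathcal{X}^{1,2}(\Omega)} = 1$, and one should directly infer $\varsigma_{k+1} = \min\{1, \lambda_{k+1}/C_{?}\}\cdots$ — more precisely, combine $\|u\|_{\mathcal{X}^{1,2}(\Omega)}^2 = 1$ with the eigenvalue inequality to get, for any $\theta \in (0,1)$,
\[
\|u\|_{\mathcal{X}^{1,2}(\Omega)}^2 + \int_\Omega a(x)u^2\,dx \;\geqslant\; (1-\theta)\|u\|_{\mathcal{X}^{1,2}(\Omega)}^2 + \theta\lambda_{k+1}|u|_2^2 \;\geqslant\; (1-\theta),
\]
which already gives $\varsigma_{k+1} \geqslant 1-\theta$... but that still uses only positivity, not strict positivity over the unit sphere.

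The genuinely clean argument, and the one I would write, is by compactness: $\varsigma_{k+1}$ is the infimum over the set $S = \{u \in Z_{k+1} : \|u\|_{\mathcal{X}^{1,2}(\Omega)} = 1\}$ of the weakly lower semicontinuous functional $u \mapsto \|u\|_{\mathcal{X}^{1,2}(\Omega)}^2 + \int_\Omega a(x)u^2\,dx$ (lower semicontinuity of $\|\cdot\|^2$ under weak convergence, plus weak continuity of $\mathcal{G}$ from Lemma~\ref{Lemma: weakly continuous}). Since $Z_{k+1}$ is a closed subspace of a Hilbert space hence weakly closed, and $S$ is bounded, a minimizing sequence has a weak limit $u_* \in Z_{k+1}$; one must check $u_* \neq 0$, i.e. that the infimum is not attained "at scale zero" — here one uses that along the minimizing sequence $\|u_j\|_{\mathcal{X}^{1,2}(\Omega)} = 1$ and $u_j \to u_*$ strongly in $L^2$ (compact embedding), so if the functional value stayed bounded then by the eigenvalue inequality the $L^2$ norms cannot all collapse unless $\lambda_{k+1} = +\infty$; actually the subtle point is whether $u_* \in S$, since $\|\cdot\|_{\mathcal{X}^{1,2}(\Omega)}$ is only weakly lower semicontinuous so $\|u_*\| \leqslant 1$. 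I would handle this by rescaling: if $0 < \|u_*\|_{\mathcal{X}^{1,2}(\Omega)} \leqslant 1$, then $v_* = u_*/\|u_*\|_{\mathcal{X}^{1,2}(\Omega)} \in S$ and $\|v_*\|^2 + \int a v_*^2 = \|u_*\|^{-2}(\|u_*\|^2 + \int a u_*^2) \geqslant \varsigma_{k+1}$; combined with weak lower semicontinuity $\|u_*\|^2 + \int a u_*^2 \leqslant \liminf (\cdots) = \varsigma_{k+1}$, and $\|u_*\|^{-2} \geqslant 1$, this forces $\|u_*\| = 1$ and the value equals $\varsigma_{k+1}$, which by the eigenvalue inequality is $\geqslant \lambda_{k+1}|u_*|_2^2$. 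It remains to rule out $u_* = 0$: but $u_j \to u_*$ in $L^2$ and $|u_j|_2$ is bounded below — indeed by \eqref{eigenvalue} or \eqref{finite dim eq norm}-type reasoning on $Z_{k+1}$ one has $\|u_j\|_{\mathcal{X}^{1,2}(\Omega)}^2 \geqslant \lambda_{k+1} |u_j|_2^2$ only gives an upper bound on $|u_j|_2$, so instead I invoke that the value $\varsigma_{k+1} \leqslant \|e_{k+1}\|^2 + \int a e_{k+1}^2 = \lambda_{k+1} < \infty$ is finite and, were $u_* = 0$, weak l.s.c. would give $\varsigma_{k+1} \geqslant \liminf \|u_j\|^2 + 0 = 1$, so $u_* = 0$ is only a problem if $\varsigma_{k+1} \geqslant 1$, which is harmless. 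Thus the only real case is $u_* \neq 0$, giving $\varsigma_{k+1} = \|u_*\|^2 + \int a u_*^2 \geqslant \lambda_{k+1}|u_*|_2^2 > 0$. The main obstacle, as the above makes clear, is precisely the failure of $\|\cdot\|_{\mathcal{X}^{1,2}(\Omega)}$ to be weakly continuous (only w.l.s.c.), so that one cannot naively pass to the limit on the constraint sphere; the rescaling trick above is the standard way around it, and I would present that cleanly rather than the dichotomy computations.
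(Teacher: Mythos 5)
Your proposal ultimately lands on exactly the paper's argument: take a minimizing sequence normalized in $\mathcal{X}^{1,2}(\Omega)$, pass to a weak limit $u_*\in Z_{k+1}$ (which is weakly closed), combine weak lower semicontinuity of $\|\cdot\|_{\mathcal{X}^{1,2}(\Omega)}^2$ with weak continuity of $\mathcal{G}$ (Lemma~\ref{Lemma: weakly continuous}), and split into $u_*=0$ (giving $\varsigma_{k+1}=1$) versus $u_*\neq 0$ (giving $\varsigma_{k+1}\geqslant\lambda_{k+1}|u_*|_2^2>0$). The detours you flagged yourself are indeed unnecessary — in particular the rescaling step to force $\|u_*\|_{\mathcal{X}^{1,2}(\Omega)}=1$ is circular (it presupposes $\varsigma_{k+1}>0$) and is not needed, since weak lower semicontinuity already yields $\varsigma_{k+1}\geqslant\|u_*\|_{\mathcal{X}^{1,2}(\Omega)}^2+\mathcal{G}(u_*)\geqslant\lambda_{k+1}|u_*|_2^2$ directly, which is all the conclusion requires.
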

\begin{proof}
	By the definition of $\lambda_{k+1}$, on $Z_{k+1}$ we have
	\[\lambda_{k+1}|u|_{2}^{2} \leqslant \|u\|_{\mathcal{X}^{1,2}(\Omega)}^{2} + \int_{\Omega} a(x)u^{2} dx.\]
	Consider a minimizing sequence $(u_{j}) \subset Z_{k+1}:$
	\[ \|u_{j}\|_{\mathcal{X}^{1,2}(\Omega)}=1, \quad 1+ \mathcal{G}(u_{j}) \rightarrow \varsigma_{k+1}.\]
	Going if necessary to a subsequence, we may assume $u_{j} \rightharpoonup u$ in $\mathcal{X}^{1,2}(\Omega)$. By Lemma~\ref{Lemma: weakly continuous},
	\[\varsigma_{k+1}= \lim_{j\rightarrow \infty} \left\{ \|u_{j}\|_{\mathcal{X}^{1,2}(\Omega)}^{2}+\mathcal{G}(u_{j}) \right\} \geqslant \|u\|_{\mathcal{X}^{1,2}(\Omega)}^{2}+ \mathcal{G}(u) \geqslant \lambda_{k+1} |u|_{2}^{2}.\]
	If $u=0, \varsigma_{k+1}=1$ and if $u \neq 0, \varsigma_{k+1} \geqslant \lambda_{k+1} |u|_{2}^{2} >0.$
\end{proof}
\begin{Proposition}\label{Prop:Linking}
	Let $\lambda_{k} \leqslant 0 < \lambda_{k+1}$. Assume $a$ satisfies \eqref{Assumption of a}, $f$ satisfies (P), (H1)-(H3). Then, there exist $\rho >r >0$ and $z \in N:=\{u \in Z_{k+1} \text{ s.t. } \|u\|_{\mathcal{X}^{1,2}(\Omega)}= r\}$ such that
	\[\inf_{N} \mathcal{J}(u) > \max_{M_{0}}\mathcal{J}(u)\]
	where 
	$ M_{0}:=\left\{u=y+\omega z: \|u\|_{\mathcal{X}^{1,2}(\Omega)}= \rho ,  y\in Y_{k} \text{ and } \omega \geqslant 0 \right\} \cup \left\{ u \in Y_{k}: \|u\|_{\mathcal{X}^{1,2}(\Omega)}\leqslant \rho \right\}$.
\end{Proposition}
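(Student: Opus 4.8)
The plan is to verify the two inequalities in the linking geometry separately, namely that $\mathcal{J}$ stays bounded below by a positive constant on the ``ring'' $N \subset Z_{k+1}$, and that it becomes non-positive (or at least smaller) on the boundary $M_0$ of a suitable finite-dimensional half-ball. The key tools are Lemma~\ref{Lemma: Zk+1 inf} (which gives $\varsigma_{k+1}>0$ on $Z_{k+1}$), the growth/asymptotic assumptions (H1)--(H3) and (P), the continuous embedding $\mathcal{X}^{1,2}(\Omega)\hookrightarrow L^q(\Omega)$, and the equivalence of norms \eqref{finite dim eq norm} on the finite-dimensional space $Y_k$ (and on $Y_k\oplus\mathbb{R}z$).

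\textbf{Step 1: lower bound on $N$.} First I would combine (H2) and (H1): given $\varepsilon>0$ there is $C_\varepsilon>0$ with $F(x,t)\leqslant \frac{\varepsilon}{2}t^2 + C_\varepsilon |t|^q$ for all $t$, uniformly in $x$. For $u\in Z_{k+1}$ with $\|u\|_{\mathcal{X}^{1,2}(\Omega)}=r$, Lemma~\ref{Lemma: Zk+1 inf} gives $\|u\|^2_{\mathcal{X}^{1,2}(\Omega)}+\int_\Omega a u^2\,dx \geqslant \varsigma_{k+1}\, r^2$, hence
\[
\mathcal{J}(u) \geqslant \frac{\varsigma_{k+1}}{2} r^2 - \frac{\varepsilon}{2}|u|_2^2 - C_\varepsilon |u|_q^q \geqslant \frac{\varsigma_{k+1}}{2} r^2 - C\varepsilon r^2 - C' r^q
\]
using the Sobolev embedding and $|u|_2\le C|u|_{\mathcal{X}^{1,2}}$. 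Choosing $\varepsilon$ small so that $C\varepsilon < \varsigma_{k+1}/4$, and then $r>0$ small enough (since $q>2$) that $C' r^{q-2} < \varsigma_{k+1}/4$, we get $\mathcal{J}(u)\geqslant \frac{\varsigma_{k+1}}{4}r^2 =: \alpha >0$ on $N$. So $\inf_N \mathcal{J} \geqslant \alpha >0$.

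\textbf{Step 2: upper bound on $M_0$.} Here I split $M_0$ into its two pieces. On the flat part $\{u\in Y_k:\|u\|\leqslant\rho\}$, for $u\in Y_k$ we have $\|u\|_{\mathcal{X}^{1,2}}^2 + \int_\Omega a u^2\,dx \leqslant \lambda_k |u|_2^2 \leqslant 0$ (using $\lambda_k\le 0$ and the variational characterization restricted to $Y_k$, together with $|u|_2=1$ normalization scaled appropriately), and by (P), $F(x,u)\geqslant \lambda_k \frac{u^2}{2}$, so $\mathcal{J}(u)\leqslant \frac12\lambda_k|u|_2^2 - \frac12\lambda_k|u|_2^2 = 0$. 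Hence $\mathcal{J}\leqslant 0$ on $Y_k$. For the curved part $u=y+\omega z$ with $\omega\geqslant 0$ and $\|u\|=\rho$, I would estimate: the quadratic part is bounded above by $\frac12(\lambda_k|y|_2^2 + \omega^2\|z\|^2 + \omega^2\int a z^2) \le C\omega^2 \leqslant C\rho^2$ (with $y$ contributing non-positively), while (H3) --- superquadratic growth of $F$ --- forces $-\int_\Omega F(x,u)\,dx \leqslant -A\|u\|^2 + B$ for any $A>0$ on the finite-dimensional subspace $Y_k\oplus\mathbb{R}z$ (this is the usual argument: on a finite-dimensional space all norms are equivalent, and (H3) with Fatou gives $\int_\Omega F(x,u)/\|u\|^2 \to +\infty$ as $\|u\|\to\infty$ along the unit sphere, uniformly by compactness). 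Combining, $\mathcal{J}(u) \leqslant C\rho^2 - A\rho^2 + B \to -\infty$ as $\rho\to\infty$ once $A>C$; in particular for $\rho$ large enough $\mathcal{J}\leqslant 0 < \alpha$ on the curved part too. Picking $\rho>r$ large, we get $\max_{M_0}\mathcal{J} \leqslant 0 < \alpha \leqslant \inf_N\mathcal{J}$.

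\textbf{Main obstacle.} The delicate point is the curved part of $M_0$: one must check that the superquadratic decay $-\int F(x,u)\,dx \to -\infty$ genuinely dominates uniformly over the whole sphere $\{y+\omega z : \omega\geqslant 0\}$, including the degenerate directions where $\omega\to 0$ and $u$ approaches $Y_k$ --- there one relies instead on (P) and $\lambda_k\le 0$ to keep $\mathcal{J}\le 0$. Patching these two regimes requires a careful uniform argument (e.g.\ a contradiction/compactness argument on the unit sphere of the finite-dimensional space $Y_k\oplus\mathbb{R}z$, using that (H3) holds uniformly in $x$ and $\Omega$ is bounded, so that $\int_\Omega F(x, t v)\,dx / t^2 \to +\infty$ as $t\to\infty$ for each unit $v$ with $v\notin Y_k$, and handling $v\in Y_k$ via (P)). Everything else is routine: the Sobolev embedding and the equivalence of norms on $Y_k$ are already recorded in \eqref{finite dim eq norm} and Proposition~\ref{Prop: continuous and compact embedding}.
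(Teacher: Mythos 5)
Your proposal is correct and follows essentially the same strategy as the paper: Step~1 uses the $\varepsilon|t|^2+C_\varepsilon|t|^q$ bound on $F$ (from (H1)--(H2)) together with $\varsigma_{k+1}>0$ to get a positive lower bound on $N$; the flat part of $M_0$ is handled by the eigenfunction expansion, $\lambda_i\leqslant\lambda_k\leqslant 0$, and assumption (P); the curved part is handled by (H3) and equivalence of norms on a finite-dimensional space. The paper in fact makes the specific choice $z=r\,e_{k+1}/\|e_{k+1}\|_{\mathcal{X}^{1,2}(\Omega)}$, so that $Y_k\oplus\mathbb{R}z=Y_{k+1}$, and then uses the constants $C_{k+1,2},\tilde C_{k+1,2^*}$ of \eqref{finite dim eq norm}.

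One thing worth correcting: the ``main obstacle'' you identify is not actually an obstacle, and the compactness/contradiction argument you sketch there is unnecessary. From (H3) and the continuity of $f$ on $\bar\Omega\times\mathbb{R}$ one gets, for every $M>0$, a constant $C_M$ with $F(x,t)\geqslant Mt^2-C_M$ for \emph{all} $t$; integrating and using $|u|_2\geqslant C_{k+1,2}\|u\|_{\mathcal{X}^{1,2}(\Omega)}$ on $Y_{k+1}$ gives the uniform bound
\[
\mathcal{J}(u)\leqslant \tfrac12\bigl(1+\tilde C_{k+1,2^*}^2|a|_{n/2}\bigr)\|u\|_{\mathcal{X}^{1,2}(\Omega)}^2 - M\,C_{k+1,2}^2\|u\|_{\mathcal{X}^{1,2}(\Omega)}^2 + C_M|\Omega|
\]
for \emph{every} $u\in Y_{k+1}$, including the degenerate direction $\omega=0$. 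Choosing $M$ large and then $\rho$ large makes the right side negative on $\{\|u\|_{\mathcal{X}^{1,2}(\Omega)}=\rho\}$, uniformly over all directions, with no patching needed. The reason (P) is still required is different from what you state: it is needed for the \emph{flat} part $\{u\in Y_k:\|u\|_{\mathcal{X}^{1,2}(\Omega)}\leqslant\rho\}$, because the above bound is useless when $\|u\|_{\mathcal{X}^{1,2}(\Omega)}$ is small, not because (H3) degenerates as $\omega\to0$ on the sphere.
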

\begin{proof}
	We just consider the case $n \geqslant 3$ and proceed step by step. 
	
	\textbf{Step 1}. In this step, we prove that there exist $r , \beta>0$ such that $\inf_{N} \mathcal{J}(u)\geqslant \beta$.
	
	$f$ satisfying (H1) and (H2) implies that,  
	for any $\varepsilon>0$ there exists $\delta(\varepsilon) >0$ such that for a.e. $x \in \Omega$ and any $t \in \mathbb{R}$
	\begin{equation}\label{estimate of F}
		|F(x, t)| \leqslant \varepsilon|t|^{2}+\delta(\varepsilon)|t|^{q}.
	\end{equation} 
	From Proposition~\ref{Prop: continuous and compact embedding} and Lemma~\ref{Lemma: Zk+1 inf}, for any $u \in Z_{k+1}$
	\begin{equation}\label{important estimates}
		\begin{aligned}
			\mathcal{J}(u)&= \frac{1}{2} \left(\|u\|_{\mathcal{X}^{1,2}(\Omega)}^{2} +  \int_{\Omega} a(x)u^{2} dx \right) - \int_{\Omega} F(x,u)dx\\
			&\geqslant \frac{\varsigma_{k+1}}{2} \|u\|_{\mathcal{X}^{1,2}(\Omega)}^{2} - \varepsilon|u|_{2}^{2} - \delta(\varepsilon)|u|_{q}^{q}\\
			&\geqslant \frac{\varsigma_{k+1}}{2} \|u\|_{\mathcal{X}^{1,2}(\Omega)}^{2} - \varepsilon |\Omega|^{ 1- \frac{2}{2^{*}} }|u|_{2^{*}}^{2} - \delta(\varepsilon)|\Omega|^{ 1- \frac{q}{2^{*}}  }|u|_{2^{*}}^{q}\\
			&\geqslant \frac{\varsigma_{k+1}}{2} \|u\|_{\mathcal{X}^{1,2}(\Omega)}^{2} - \varepsilon|\Omega|^{ 1- \frac{2}{2^{*}} }C\|u\|_{\mathcal{X}^{1,2}(\Omega)}^{2} - \delta(\varepsilon)|\Omega|^{  1- \frac{q}{2^{*}} }C\|u\|_{\mathcal{X}^{1,2}(\Omega)}^{q}\\
			&= \|u\|_{\mathcal{X}^{1,2}(\Omega)}^{2} \left[\frac{\varsigma_{k+1}}{2} - \varepsilon|\Omega|^{1- \frac{2}{2^{*}} }C \right] - \delta(\varepsilon)|\Omega|^{  1- \frac{q}{2^{*}} }C\|u\|_{\mathcal{X}^{1,2}(\Omega)}^{q},
		\end{aligned}
	\end{equation}
	where the second inequality  uses the H{\"o}lder inequality.
	
	Taking $0<\varepsilon < \frac{\varsigma_{k+1}}{2C|\Omega|^{1- 2/2^{*}}}$, it easily follows that
	\[\begin{aligned}
		\mathcal{J}(u) \geqslant \alpha\|u\|_{\mathcal{X}^{1,2}(\Omega)}^{2}\left(1-\kappa\|u\|_{\mathcal{X}^{1,2}(\Omega)}^{q-2}\right)
	\end{aligned}\]
	for suitable positive constants $\alpha$ and $\kappa$. 
	Let $u \in Z_{k+1} $ be such that $\|u\|_{\mathcal{X}^{1,2}(\Omega)}= r>0$. Choose $r$ sufficiently small such that $1-\kappa r^{q-2}>0$. So that
	\[\inf_{N} \mathcal{J}(u) \geqslant \alpha r^{2}\left(1-\kappa r^{q-2}\right)=: \beta>0.\]
	
	\textbf{Step 2}.
	Take  $z  := r \frac{e_{k+1}}{\|e_{k+1}\|_{\mathcal{X}^{1,2}(\Omega)}} \in N$. We prove that there exists $\rho > r$ such that 
	\begin{equation}\label{12}
		\max_{M_{0}}\mathcal{J}(u)<0.
	\end{equation} 
	
	In fact, (H3) implies that,  for all $M>0$, there exists $C_{M}>0$ such that
	\begin{equation}\label{H3}
		F(x, t) \geq M t^{2}-C_{M}, \quad  \text{ for a.e. }  x \in \Omega,  t \in \mathbb{R} .
	\end{equation}
	So that, for any $u=y+\omega z \in Y_{k} \oplus \mathbb{R} z$ where $ \omega \geqslant 0$, we have
	\begin{equation*}
		\begin{aligned}
			\mathcal{J}(u)
%			&\leqslant \frac{1}{2} \|u\|_{\mathcal{X}^{1,2}(\Omega)}^{2} + \frac{1}{2} |a(x)|_{\frac{n}{2}} |u^{2}|_{\frac{n}{n-2}}- \int_{\Omega} F(x,u)dx\\
			&\leqslant \frac{1}{2} \|u\|_{\mathcal{X}^{1,2}(\Omega)}^{2} + \frac{1}{2} |a(x)|_{\frac{n}{2}} |u|_{2^{*}}^{2}- M|u|_{2}^{2} + C_{M}|\Omega|\\
			&\leqslant \frac{1}{2} \|u\|_{\mathcal{X}^{1,2}(\Omega)}^{2} + \frac{1}{2} \tilde{C}_{k+1, 2^{*}}^{2}|a(x)|_{\frac{n}{2}}\|u\|_{\mathcal{X}^{1,2}(\Omega)}^{2}   -M C_{k+1,2}^{2}\|u\|_{\mathcal{X}^{1,2}(\Omega)}^{2}+C_{M}|\Omega|
		\end{aligned}
	\end{equation*}
	where the last inequality is deduced from~\eqref{finite dim eq norm}.
	Take \[M>\frac{2 + \tilde{C}_{k+1, 2^{*}}^{2}|a(x)|_{\frac{n}{2}}}{2C_{k+1,2}^{2}} >0.\]
	Then, $\mathcal{J}(u)\leqslant -\|u\|_{\mathcal{X}^{1,2}(\Omega)}^{2}+C_{M}|\Omega|. $
	
	Let $u= y + \omega z$ be such that $\|u\|_{\mathcal{X}^{1,2}(\Omega)}= \rho>0$. Choose $\rho$ big enough such that
	\begin{equation}\label{7}
		\max\{\mathcal{J}(u): u=y+\omega z \text{ s.t. } y\in Y_{k},~~ \|u\|_{\mathcal{X}^{1,2}(\Omega)}= \rho, ~~\omega \geqslant 0  \} <0.
	\end{equation}
	
	Moreover, for any $u \in Y_{k}$, $u$ can be characterized as 
	$u(x)=\sum_{i=1}^{k} u_{i} e_{i}(x),$
	with $u_{i} \in \mathbb{R}, i=1, \ldots, k$. Since eigenfunction sequence $\left\{e_{1}, \ldots, e_{k}, \ldots\right\}$ is an orthonormal basis of $L^{2}(\Omega)$, 
	$	\int_{\Omega}|u(x)|^{2} d x=\sum_{i=1}^{k} u_{i}^{2} |e_{i}|_{2}^{2}
%		\text{ and } 
%		\|u\|_{\mathcal{X}^{1,2}(\Omega)}^{2} + \int_{\Omega} a(x)u^{2} dx=\sum_{i=1}^{k} u_{i}^{2}\left\|e_{i}\right\|_{\mathcal{X}^{1,2}(\Omega)}^{2} 
	$ and  
	\[\int_{\mathbb{R}^{n}} \left(|\nabla\sum_{i=1}^{k} u_{i}e_{i}|^{2} + \int_{\mathbb{R}^{n}} \frac{|\sum_{i=1}^{k}u_{i} \left(e_{i}(x)-e_{i}(y)\right)|^{2}}{|x-y|^{n+2s}} dy + a \left( \sum_{i=1}^{k} u_{i}e_{i} \right)^{2} \right) dx = \sum_{i=1}^{k} u_{i}^{2} \lambda_{i} |e_{i}|_{2}^{2}.\]
    Test the eigenvalue equation~\eqref{eigen problem} for $e_{i}$ by test function $e_{j}$ for $j\neq i$, 
     \[ B_{s}(e_{i},e_{j})+ \int_{ \Omega } a(x)e_{i}e_{j}=\lambda_{i}\int_{\Omega} e_{i} e_{j}dx =0 .\]
     
	By assumption (P), we get
	\begin{equation*}
		\begin{aligned}
			\mathcal{J}(u) 
			&= \frac{1}{2} \sum_{i=1}^{k} u_{i}^{2} \lambda_{i} \int_{\Omega}  e_{i}^{2} dx-  \int_{\Omega} F(x, u) d x  \leqslant \frac{1}{2} \lambda_{k} \int_{\Omega} \sum_{i=1}^{k} u_{i}^{2}e_{i}^{2} - F(x, u) d x \\
			& = \frac{1}{2} \lambda_{k} \int_{\Omega} \left(\sum_{i=1}^{k} u_{i}e_{i}\right)^{2}dx -  \int_{\Omega}  F(x, u) d x = \int_{\Omega} \lambda_{k} \frac{u^{2}}{2}- F(x, u) d x 
			 \leqslant 0
		\end{aligned}
	\end{equation*}
	thanks to $\lambda_{i} \leqslant \lambda_{k}$ for any $i=1, \ldots, k$. 
	Together with \eqref{7}, \eqref{12} follows.
	
	By combining steps 1 and 2, the assertion of Proposition~\ref{Prop:Linking} follows.  
\end{proof}
Now we give the proof of Theorem \ref{Thm: LandMP} when $ \lambda_{1} \leqslant 0$.
\begin{proof}[\bf{Proof of Theorem \ref{Thm: LandMP} when $ \lambda_{1} \leqslant 0$}] Assume that $\lambda_{k} \leqslant 0 < \lambda_{k+1}$ for some $k\in \mathbb{N}$. Since the geometry of the Linking Theorem is assured by Proposition~\ref{Prop:Linking} and  $(PS)_{c}$ condition is obtained by Proposition~\ref{Prop:PSc}, we can exploit the Linking Theorem to find a critical point $u \in \mathcal{X}^{1,2}(\Omega)$ of $\mathcal{J}$. Furthermore,
	\[\mathcal{J}(u)\geqslant \inf_{N} \mathcal{J}(u) \geqslant \beta>0=\mathcal{J}(0)\]
	and so $u \not \equiv 0$. 
\end{proof}
\subsection{$\lambda_{1} > 0$: Mountain Pass type solution}\label{Subsection:MP}
For the case $\lambda_{1} > 0$, we use Mountain Pass theorem to obtain the weak solutions and discuss the sign of solutions.
\subsubsection{The existence of mountain pass type solution}
%We first give the following lemma. The proof is quite similar to that in Lemma~\ref{Lemma: Zk+1 inf} and we omit it.
Similar to the arguments in Lemma~\ref{Lemma: Zk+1 inf}, it is obvious to see
\begin{Lemma}\label{Lem:MP}
	If $a(x) $ satisfies \eqref{Assumption of a} and $\lambda_{1} > 0$, then 
	\[\varsigma_{1} :=  \inf _ {\substack{u \in \mathcal{X}^{1,2}(\Omega) \\\|u\|_{\mathcal{X}^{1,2}(\Omega)}=1}} \left\{ \|u\|_{\mathcal{X}^{1,2}(\Omega)}^{2} + \int_{\Omega} a(x)u^{2} dx \right\} >0.\]
\end{Lemma}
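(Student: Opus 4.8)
The plan is to replay, essentially line by line, the argument used for Lemma~\ref{Lemma: Zk+1 inf}, with the closed subspace $Z_{k+1}$ replaced by the whole space $\mathcal{X}^{1,2}(\Omega)$ and the eigenvalue $\lambda_{k+1}$ replaced by $\lambda_1$. First I would record the inequality furnished by the variational characterization of $\lambda_1$ in Proposition~\ref{Prop:eigen}(i): by homogeneity, $\lambda_1 |u|_2^2 \leqslant \|u\|_{\mathcal{X}^{1,2}(\Omega)}^2 + \int_\Omega a(x) u^2\, dx$ for every $u \in \mathcal{X}^{1,2}(\Omega)$. Since $\lambda_1 > 0$, evaluating this at any $u$ with $\|u\|_{\mathcal{X}^{1,2}(\Omega)}=1$ already gives $1 + \mathcal{G}(u) \geqslant 0$, hence $\varsigma_1 \geqslant 0 > -\infty$; the real point is to upgrade this to strict positivity.

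Next I would fix a minimizing sequence $(u_j) \subset \mathcal{X}^{1,2}(\Omega)$ with $\|u_j\|_{\mathcal{X}^{1,2}(\Omega)} = 1$ and $1 + \mathcal{G}(u_j) \to \varsigma_1$. Since $(u_j)$ is bounded, up to a subsequence (still denoted $(u_j)$) it converges weakly in $\mathcal{X}^{1,2}(\Omega)$ to some $u$. By Lemma~\ref{Lemma: weakly continuous}, $\mathcal{G}(u_j) \to \mathcal{G}(u)$, so $\varsigma_1 = 1 + \mathcal{G}(u)$; combining this with the weak lower semicontinuity of the norm, $\|u\|_{\mathcal{X}^{1,2}(\Omega)}^2 \leqslant \liminf_{j\to\infty} \|u_j\|_{\mathcal{X}^{1,2}(\Omega)}^2 = 1$, and with the inequality recorded above, we obtain
\[
\varsigma_1 = 1 + \mathcal{G}(u) \;\geqslant\; \|u\|_{\mathcal{X}^{1,2}(\Omega)}^2 + \mathcal{G}(u) \;\geqslant\; \lambda_1 |u|_2^2 .
\]

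Finally I would split into two cases exactly as in Lemma~\ref{Lemma: Zk+1 inf}: if $u = 0$, then $\varsigma_1 = 1 + \mathcal{G}(0) = 1 > 0$; if $u \neq 0$, then $|u|_2 > 0$ and the displayed chain gives $\varsigma_1 \geqslant \lambda_1 |u|_2^2 > 0$, using $\lambda_1 > 0$. In either case $\varsigma_1 > 0$, as claimed. I do not anticipate any real obstacle: everything reduces to the weak continuity of $\mathcal{G}$ (Lemma~\ref{Lemma: weakly continuous}) together with weak lower semicontinuity of the norm, and the only mild subtlety — that the infimum need not be attained and that the weak limit of a minimizing sequence may vanish — is precisely what the case distinction absorbs, just as in the proof of Lemma~\ref{Lemma: Zk+1 inf}.
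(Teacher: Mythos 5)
Your proof is correct and matches the paper's intended argument exactly: the paper itself states only that Lemma~\ref{Lem:MP} follows ``Similar to the arguments in Lemma~\ref{Lemma: Zk+1 inf},'' and you have carried out precisely that transcription, replacing $Z_{k+1}$ by $\mathcal{X}^{1,2}(\Omega)$ and $\lambda_{k+1}$ by $\lambda_1$, with the same use of Lemma~\ref{Lemma: weakly continuous}, weak lower semicontinuity of the norm, and the two-case dichotomy on the weak limit.
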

We now obtain the Mountain Pass geometric features of $\mathcal{J}$.
\begin{Proposition}\label{Prop:MP}
	Let $\lambda_{1} > 0$. Assume that $a(x)$ satisfies \eqref{Assumption of a}, $f$ satisfies (H1)-(H3). Then,
	\begin{itemize}
		\item [$(a)$] there exist $\gamma, R>0$ such that $\mathcal{J}(u) \geq R$, if $\|u\|_{\mathcal{X}^{1,2}(\Omega)}=\gamma$.
		\item [$(b)$] there exists $e \in \mathcal{X}^{1,2}(\Omega)$ such that $\|e\|_{\mathcal{X}^{1,2}(\Omega)}>\gamma$ and $\mathcal{J}(e)<R$.
	\end{itemize}
	%		$(a)$ If $f$ also satisfies (H1) and (H2), then there exist $\gamma, R>0$ such that $\mathcal{J}(u) \geq R$, if $\|u\|_{\mathcal{X}^{1,2}(\Omega)}=\gamma$.\\
	%		$(b)$ If $f$ also satisfies (H3), then there exists $e \in \mathcal{X}^{1,2}(\Omega)$ such that $\|e\|_{\mathcal{X}^{1,2}(\Omega)}>\gamma$ and $\mathcal{J}(e)<R$.
\end{Proposition}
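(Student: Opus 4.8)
The plan is to verify the two standard Mountain Pass geometric conditions for $\mathcal{J}$, exploiting the positivity constant $\varsigma_1 > 0$ from Lemma~\ref{Lem:MP} in place of the usual coercivity of the quadratic part. For part $(a)$, I would first invoke the growth estimate \eqref{estimate of F}, namely $|F(x,t)| \leqslant \varepsilon |t|^2 + \delta(\varepsilon) |t|^q$, which follows from (H1)--(H2) exactly as in the proof of Proposition~\ref{Prop:Linking}. Then, for any $u \in \mathcal{X}^{1,2}(\Omega)$, I would estimate
\[
\mathcal{J}(u) = \frac{1}{2}\left(\|u\|_{\mathcal{X}^{1,2}(\Omega)}^2 + \int_\Omega a(x) u^2\, dx\right) - \int_\Omega F(x,u)\, dx \geqslant \frac{\varsigma_1}{2}\|u\|_{\mathcal{X}^{1,2}(\Omega)}^2 - \varepsilon |u|_2^2 - \delta(\varepsilon)|u|_q^q,
\]
and then apply the continuous embeddings $\mathcal{X}^{1,2}(\Omega) \hookrightarrow L^2(\Omega)$ and $\mathcal{X}^{1,2}(\Omega) \hookrightarrow L^q(\Omega)$ (Proposition~\ref{Prop: continuous and compact embedding}, since $q \in (2,2^*)$) to bound $|u|_2^2$ and $|u|_q^q$ by powers of $\|u\|_{\mathcal{X}^{1,2}(\Omega)}$. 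Choosing $\varepsilon$ small enough that the coefficient of $\|u\|_{\mathcal{X}^{1,2}(\Omega)}^2$ stays positive, one gets $\mathcal{J}(u) \geqslant \alpha \|u\|_{\mathcal{X}^{1,2}(\Omega)}^2 (1 - \kappa \|u\|_{\mathcal{X}^{1,2}(\Omega)}^{q-2})$ for suitable $\alpha, \kappa > 0$, so taking $\gamma > 0$ small with $1 - \kappa \gamma^{q-2} > 0$ yields $R := \alpha \gamma^2(1 - \kappa \gamma^{q-2}) > 0$. This is essentially the same computation as \eqref{important estimates}, but carried out on all of $\mathcal{X}^{1,2}(\Omega)$ rather than on $Z_{k+1}$.

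For part $(b)$, I would fix any nonzero $v \in \mathcal{X}^{1,2}(\Omega)$, say $v = e_1$, and study $\mathcal{J}(tv)$ as $t \to +\infty$. Using the consequence of (H3) that for every $M > 0$ there is $C_M > 0$ with $F(x,s) \geqslant M s^2 - C_M$ for a.e. $x \in \Omega$ and all $s \in \mathbb{R}$ (this is \eqref{H3}), I would write
\[
\mathcal{J}(tv) \leqslant \frac{t^2}{2}\left(\|v\|_{\mathcal{X}^{1,2}(\Omega)}^2 + \int_\Omega a(x) v^2\, dx\right) - M t^2 |v|_2^2 + C_M |\Omega|.
\]
Choosing $M$ large enough that $M|v|_2^2 > \tfrac12(\|v\|_{\mathcal{X}^{1,2}(\Omega)}^2 + \int_\Omega a(x) v^2\, dx)$, the coefficient of $t^2$ becomes negative, so $\mathcal{J}(tv) \to -\infty$ as $t \to \infty$. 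Hence for $t$ sufficiently large, $e := tv$ satisfies both $\|e\|_{\mathcal{X}^{1,2}(\Omega)} > \gamma$ and $\mathcal{J}(e) < R$ (indeed $\mathcal{J}(e) < 0$). One caveat to check is that $\int_\Omega a(x) v^2\, dx$ is finite, which is exactly the well-posedness already established for $\mathcal{G}$ in Lemma~\ref{Lemma: weakly continuous} together with assumption \eqref{Assumption of a}.

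Neither part presents a serious obstacle; the only mildly delicate point is the uniform-in-$x$ character of the lower bound \eqref{H3} coming from (H3), and making sure the passage from $|u|_q^q$ to $\|u\|^q_{\mathcal{X}^{1,2}(\Omega)}$ in part $(a)$ uses the embedding with the correct (subcritical) exponent $q < 2^*$ so that the power $q - 2 > 0$ and the factor $(1 - \kappa \|u\|^{q-2})$ can be made positive for small $\|u\|$. I would also remark in passing that, as in the local and nonlocal analogues, $R$ can be taken strictly below $\inf_{\|u\| = \gamma}\mathcal{J}(u)$ and that $\mathcal{J}(0) = 0 < R$, so the Mountain Pass geometry is genuinely nontrivial; combined with the $(PS)_c$ condition from Proposition~\ref{Prop:PSc}, this will in the sequel produce the nontrivial critical point claimed in Theorem~\ref{Thm: LandMP}(2).
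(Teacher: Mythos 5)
Your proposal is correct and follows essentially the same route as the paper: part $(a)$ is the computation of \eqref{important estimates} carried out on all of $\mathcal{X}^{1,2}(\Omega)$ using $\varsigma_1>0$ from Lemma~\ref{Lem:MP} together with \eqref{estimate of F}, and part $(b)$ uses the lower bound \eqref{H3} along a ray $t\varphi$ with $M$ chosen large so that $\mathcal{J}(t\varphi)\to-\infty$. The only cosmetic difference is that the paper bounds $\int_\Omega a(x)\varphi^2\,dx$ via H\"older by $|a|_{n/2}|\varphi|_{2^*}^2$ before choosing $M$, whereas you choose $M$ directly relative to the finite quantity $\|v\|_{\mathcal{X}^{1,2}(\Omega)}^2+\int_\Omega a(x)v^2\,dx$; both are equally valid.
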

\begin{proof}
	The proof of part $(a)$ is obvious by  Lemma~\ref{Lem:MP}.
	We just prove part $(b)$. 
	Fix $\varphi  \in \mathcal{X}^{1,2}(\Omega)$ such that $\|\varphi \|_{\mathcal{X}^{1,2}(\Omega)}=1$.
	Let $t >0$. We have
	\begin{equation}\label{15}
		\begin{aligned}
			\mathcal{J}(t \varphi)
			&= \frac{1}{2} \|t \varphi\|_{\mathcal{X}^{1,2}(\Omega)}^{2} + \frac{1}{2} \int_{\Omega} a(x) |t \varphi|_{2}^{2} dx - \int_{\Omega} F(x,t \varphi)dx\\
			&\leqslant \frac{t^{2}}{2} \left(\| \varphi\|_{\mathcal{X}^{1,2}(\Omega)}^{2} +  |a(x)|_{\frac{n}{2}}| \varphi |_{2^{*}}^{2} \right) - \int_{\Omega} M t^{2} \varphi^{2} d x+\int_{\Omega} C_{M} d x \\
			& \leqslant t^{2} \left( \frac{1 + C |a(x)|_{\frac{n}{2}} } {2}- M|\varphi|_{2}^{2} \right) +C_{M}|\Omega|,
		\end{aligned}
	\end{equation}
	thanks to Proposition~\ref{Prop: continuous and compact embedding}  and \eqref{H3}.
	Let $M= \frac{3+C |a(x)|_{n/2}}{2|\varphi|_{2}^{2}}$. Passing to the limit as $t \rightarrow +\infty$, $\mathcal{J}(t \varphi ) \rightarrow-\infty$. 
	
	The assertion follows taking $e=T \varphi $, with $T$ sufficiently large.
\end{proof}
Now we  show the rest part of Theorem \ref{Thm: LandMP}.
\begin{proof}[\bf{Proof of Theorem \ref{Thm: LandMP} when $\lambda_{1} > 0$}]
	Since the geometry of the Mountain Pass Theorem is assured by Proposition~\ref{Prop:MP} and the $(PS)_{c}$ condition is obtained by Proposition~\ref{Prop:PSc}, we can exploit the Mountain Pass Theorem to find a critical point $v\in\mathcal{X}^{1,2}(\Omega)$ of $\mathcal{J}$. Furthermore,
	\[\mathcal{J}(v)\geqslant \inf_{\|v\|_{\mathcal{X}^{1,2}(\Omega)}=\gamma } \mathcal{J}(v) \geqslant R >0=\mathcal{J}(0),\]
	and so $v \not \equiv 0$. 
\end{proof}

\subsubsection{Some comments on the sign of the solutions}
As in the cases of the Laplacian \cite[Remark 5.19]{RP86} and fractional Laplacian \cite[Corollary 13]{SV12}, one can determine the sign of the Mountain Pass type solutions of problem~\eqref{problem}. 

\begin{Corollary}\label{Cor:sign}
	Let $\lambda_{1} > 0$, $f$ satisfy (C), (H1)-(H4). If $a(x)$ satisfying \eqref{Assumption of a} is a function with constant sign, then problem~\eqref{problem} admits both a non-negative weak solution $0 \not \equiv u_{+} \in \mathcal{X}^{1,2}(\Omega)$ and a non-positive weak solution $0 \not \equiv u_{-} \in \mathcal{X}^{1,2}(\Omega)$.
\end{Corollary}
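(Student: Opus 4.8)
The plan is to adapt the classical truncation technique so as to fall back on the Mountain Pass machinery already set up for $\mathcal{J}$ in Subsection~\ref{Subsection:MP}. To produce the non-negative solution, define the truncated Carath\'eodory function
\[
f_{+}(x,t):=\begin{cases} f(x,t), & t\geqslant 0,\\ 0, & t<0,\end{cases}
\qquad F_{+}(x,t):=\int_{0}^{t}f_{+}(x,\tau)\,d\tau ,
\]
which is well defined and continuous since $f(x,0)=0$ by (H2), and set $\mathcal{J}_{+}(u):=\tfrac12 B_{a_{s}}(u,u)-\int_{\Omega}F_{+}(x,u)\,dx$. As in \eqref{energyfunctional}--\eqref{1} one checks $\mathcal{J}_{+}\in C^{1}(\mathcal{X}^{1,2}(\Omega),\mathbb{R})$ and that its critical points are exactly the weak solutions of $-\Delta u+(-\Delta)^{s}u+a(x)u=f_{+}(x,u)$ in $\Omega$ with $u\equiv0$ on $\mathbb{R}^{n}\setminus\Omega$.

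First I would verify that $\mathcal{J}_{+}$ has the Mountain Pass geometry. Since $|F_{+}(x,t)|\leqslant|F(x,t)|\leqslant\varepsilon|t|^{2}+\delta(\varepsilon)|t|^{q}$, the proof of Proposition~\ref{Prop:MP}(a) goes through verbatim (using $\varsigma_{1}>0$ from Lemma~\ref{Lem:MP}), giving $\gamma,R>0$ with $\mathcal{J}_{+}(u)\geqslant R$ on $\|u\|_{\mathcal{X}^{1,2}(\Omega)}=\gamma$; for the descent direction, fix any $\varphi\in\mathcal{X}^{1,2}(\Omega)$ with $\varphi\geqslant0$, $\varphi\not\equiv0$ and $\|\varphi\|_{\mathcal{X}^{1,2}(\Omega)}=1$, so that $F_{+}(x,t\varphi)=F(x,t\varphi)$ for $t>0$ and the estimate \eqref{15} still yields $\mathcal{J}_{+}(t\varphi)\to-\infty$, whence a suitable $e=T\varphi$.

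The delicate point is the $(PS)_{c}$ condition for $\mathcal{J}_{+}$: the truncated $f_{+}$ no longer satisfies (H3) for $t<0$, so Proposition~\ref{Prop:PSc} does not apply directly. I would first show that along any Palais--Smale sequence $(u_{j})$ the negative parts vanish. Testing with $u_{j}^{-}:=\min(u_{j},0)\in\mathcal{X}^{1,2}(\Omega)$, one has $\int_{\Omega}f_{+}(x,u_{j})u_{j}^{-}\,dx=0$ (the integrand vanishes on $\{u_{j}\geqslant0\}$ because $u_{j}^{-}=0$, and on $\{u_{j}<0\}$ because $f_{+}(x,u_{j})=0$); on the other hand, the elementary inequality $(u_{j}(x)-u_{j}(y))(u_{j}^{-}(x)-u_{j}^{-}(y))\geqslant(u_{j}^{-}(x)-u_{j}^{-}(y))^{2}$ together with $\int_{\mathbb{R}^{n}}\nabla u_{j}\cdot\nabla u_{j}^{-}=\|\nabla u_{j}^{-}\|_{L^{2}(\mathbb{R}^{n})}^{2}$ and $\int_{\Omega}a(x)u_{j}u_{j}^{-}=\int_{\Omega}a(x)(u_{j}^{-})^{2}\,dx$ gives $B_{a_{s}}(u_{j},u_{j}^{-})\geqslant\|u_{j}^{-}\|_{\mathcal{X}^{1,2}(\Omega)}^{2}+\int_{\Omega}a(x)(u_{j}^{-})^{2}\,dx\geqslant\varsigma_{1}\|u_{j}^{-}\|_{\mathcal{X}^{1,2}(\Omega)}^{2}$ by Lemma~\ref{Lem:MP}. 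Since $(u_{j})$ is Palais--Smale, $B_{a_{s}}(u_{j},u_{j}^{-})=\langle\mathcal{J}_{+}'(u_{j}),u_{j}^{-}\rangle$ satisfies $|\langle\mathcal{J}_{+}'(u_{j}),u_{j}^{-}\rangle|\leqslant\varepsilon_{j}\|u_{j}^{-}\|_{\mathcal{X}^{1,2}(\Omega)}$ with $\varepsilon_{j}\to0$, hence $\|u_{j}^{-}\|_{\mathcal{X}^{1,2}(\Omega)}\to0$. On the complementary ``positive part'' $f_{+}=f$ and $F_{+}=F$, so the boundedness argument of Proposition~\ref{Prop:PSc}(a) carries over (the negative parts now contributing a vanishing error) and Proposition~\ref{Prop:PSc}(b) applies to $f_{+}$ as it stands; this gives $(PS)_{c}$ for $\mathcal{J}_{+}$.

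It then remains to apply the Mountain Pass Theorem to $\mathcal{J}_{+}$, yielding a critical point $u_{+}$ with $\mathcal{J}_{+}(u_{+})\geqslant R>0=\mathcal{J}_{+}(0)$, hence $u_{+}\not\equiv0$, and to fix its sign. Testing the weak formulation of the truncated equation with $u_{+}^{-}$, the right-hand side vanishes exactly as above, and the same algebraic inequality together with Lemma~\ref{Lem:MP} gives $0=B_{a_{s}}(u_{+},u_{+}^{-})\geqslant\varsigma_{1}\|u_{+}^{-}\|_{\mathcal{X}^{1,2}(\Omega)}^{2}$, so $u_{+}^{-}=0$, i.e. $u_{+}\geqslant0$ a.e.; this is where the hypothesis that $a$ has constant sign enters, and it becomes transparent when $a\geqslant0$, since then $\|u_{+}^{-}\|_{\mathcal{X}^{1,2}(\Omega)}^{2}$ and $\int_{\Omega}a(x)(u_{+}^{-})^{2}\,dx$ are separately non-negative. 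Since $u_{+}\geqslant0$ we have $f_{+}(x,u_{+})=f(x,u_{+})$, so $u_{+}$ is a non-trivial non-negative weak solution of \eqref{problem}. Repeating the argument with the truncation $f_{-}(x,t)=f(x,t)$ for $t\leqslant0$ and $f_{-}(x,t)=0$ for $t>0$ (now testing with the positive part) produces a non-trivial non-positive weak solution $u_{-}$. The main obstacle is precisely the $(PS)_{c}$ verification for $\mathcal{J}_{+}$: once one observes that $\|u_{j}^{-}\|_{\mathcal{X}^{1,2}(\Omega)}\to0$ along Palais--Smale sequences, everything reduces to Propositions~\ref{Prop:MP}--\ref{Prop:PSc} plus the routine maximum-principle-type test-function computation.
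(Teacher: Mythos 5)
Your strategy is the same as the paper's in outline: truncate the nonlinearity, run the Mountain Pass Theorem on the truncated functional, and fix the sign of the resulting critical point by testing with its negative part and using the elementary inequality $(u(x)-u(y))(u^{-}(x)-u^{-}(y))\geqslant (u^{-}(x)-u^{-}(y))^{2}$. The genuine difference is how the $a(x)u$ term is handled. The paper truncates it as well, working with $-\Delta u+(-\Delta)^{s}u+a(x)u^{\pm}=f^{\pm}(x,u)$ and the functional containing $\tfrac12\int_{\Omega}a(x)(u^{\pm})^{2}$; this is exactly where the constant-sign hypothesis on $a$ is used, via $u^{+}\leqslant|u|$ and $\|u\|_{\mathcal{X}^{1,2}(\Omega)}^{2}+\int_{\Omega}a(x)(u^{+})^{2}\geqslant\min\{1,\varsigma_{1}\}\|u\|_{\mathcal{X}^{1,2}(\Omega)}^{2}$, to recover the Mountain Pass geometry. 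You instead keep the full quadratic term $\tfrac12\int_{\Omega}a(x)u^{2}$, so both the geometry and the sign-fixing step rest only on $\varsigma_{1}>0$ from Lemma~\ref{Lem:MP} (note $\int_{\Omega}a\,u\,u^{-}=\int_{\Omega}a(u^{-})^{2}$, so $B_{a_s}(u,u^{-})\geqslant\varsigma_{1}\|u^{-}\|_{\mathcal{X}^{1,2}(\Omega)}^{2}$); as a consequence the constant-sign assumption on $a$ is never actually needed in your argument — your closing remark that it ``enters'' at the sign-fixing step is not accurate, but this is a bonus rather than a defect, since your route proves the corollary under the weaker hypothesis $\lambda_{1}>0$ alone. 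You also go further than the paper on the one delicate point: since $f_{+}$ fails (H3) for $t<0$, Proposition~\ref{Prop:PSc} does not literally apply, and your observation that $\|u_{j}^{-}\|_{\mathcal{X}^{1,2}(\Omega)}\to0$ along Palais--Smale sequences is exactly the right device to reduce the boundedness argument to the positive part where (H3')--(H4') hold; the paper simply invokes Proposition~\ref{Prop:PSc} for $\mathcal{J}^{+}$ without comment. Your statement that the boundedness contraposition then ``carries over'' is still only a sketch and would deserve a careful re-run of that argument with the vanishing negative parts inserted, but the level of detail is no lower than in the paper's own proof.
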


In order to seek non-negative and non-positive solution of problem~\eqref{problem}, it is enough to introduce the following problem
\begin{equation}\label{problem2}
	\left\{%
	\begin{array}{ll}
		-\Delta u + (-\Delta)^{s} u +a(x) u^{\pm} = f^{\pm}(x,u) & \hbox{in $\Omega$,} \\
		u=0 &  \hbox{in $\mathbb{R}^n\backslash\Omega$} \\
	\end{array}%
	\right.
\end{equation}
where $u^{+}=\max \{u, 0\}, u^{-}=\min \{u, 0\}$ and
\[f^{+}(x, t)=\left\{\begin{array}{ll}
	f(x, t) & \text { if } t \geqslant 0 \\
	0 & \text { if } t<0
\end{array}, \quad  f^{-}(x, t)=\left\{\begin{array}{ll}
	0 & \text { if } t>0 \\
	f(x, t) & \text { if } t \leqslant 0
\end{array}.\right.\right.\]
The problem \eqref{problem2} has a variational structure, indeed it is the Euler-Lagrange equation of the functional $\mathcal{J}^{\pm}: \mathcal{X}^{1,2}(\Omega) \rightarrow \mathbb{R} $ defined as follows
\[\mathcal{J}^{\pm}= \frac{1}{2}\|u\|_{\mathcal{X}^{1,2}(\Omega)}^{2} + \frac{1}{2} \int_{\Omega} a(x) (u^{\pm})^2 - \int_{\Omega} F^{\pm}(x,u)dx\]
where
$F^{ \pm}(x, t)=\int_{0}^{t} f^{ \pm}(x, \tau) d \tau.$
It is easy to see $\mathcal{J}^{\pm}$ is Fr\'{e}chet differentiable in $u \in \mathcal{X}^{1,2}(\Omega)$ and for any $\phi \in \mathcal{X}^{1,2}(\Omega)$
\begin{equation}\label{17}
	\langle \nabla \mathcal{J}^{\pm}(u), \phi \rangle= B_{s} ( u, \phi ) + ( a(x) u^{\pm}, \phi)_{L^{2}(\Omega )} -  \int_{\Omega} f^{\pm}(x, u) \phi dx. 
\end{equation}

In order to prove Corollary~\ref{Cor:sign}, we only need to find a non-trivial critical point $u_{+} \geqslant 0$ (or $u_{-} \leqslant 0$) a.e. in $\mathbb{R}^{n}$ of $\mathcal{J}^{+}$ (or $\mathcal{J}^{-}$). In fact, if $u_{+}$ is a critical point of $\mathcal{J}^{+}$, then $u_{+}$ is a weak solution of problem~\eqref{problem2}. If we have in addition that $u_{+} \geqslant 0$ a.e. in $\mathbb{R}^{n}$, then $\mathcal{J}^{+}(u_+)=\mathcal{J}(u_+)$ and $u_{+}$ is also a weak solution of problem~\eqref{problem}.

\begin{proof}[\bf{Proof of Corollary~\ref{Cor:sign}}]
	Since $f$  satisfies (C), (H1)-(H4), we know $f^{+}$ satisfies (C), (H1), (H2) and 
	\begin{itemize}
		\item[\textbf{(H3')}] $\lim \limits_{t \rightarrow +\infty} \frac{F^{+}(x, t)}{t^{2}}=+\infty$ uniformly for a.e. $x \in \Omega$;
		\item[\textbf{(H4')}] there exists $T_{0}>0$ such that for any $x \in \Omega$, the function
		\begin{center}
			$t \mapsto \frac{f^{+}(x, t)}{t}$ is increasing in $t > T_{0}$.
		\end{center}
	\end{itemize} 
	
	As in Proposition~\ref{Prop:MP}, we can obtain the Mountain Pass geometric structure of $\mathcal{J}^{+}$. We remark that, since $a(x)$ has an invariant sign and $u^{+} \leqslant |u|$, we use 
	\[\|u\|_{\mathcal{X}^{1,2}(\Omega)}^{2} +\int_{\Omega} a(x)(u^{+})^{2} \geqslant 
	\min\{1, \varsigma_{1} \} \|u\|_{\mathcal{X}^{1,2}(\Omega)}^{2} >0\]
	to deduce estimates in \eqref{important estimates}.
	And we choose $\varphi>0$ in \eqref{15} ( Since $|\varphi (x)-\varphi(y)|^{2} \geqslant |\varphi^{+}(x)-\varphi^{+}(y)|^{2}$, we can always find $0<\varphi \in \mathcal{X}^{1,2}(\Omega)$ ).
	The $(PS)_{c}$ condition of $\mathcal{J}^{+}$ is obtained by Proposition~\ref{Prop:PSc}.
	Applying Mountain Pass Theorem, we get a non-trivial critical point $u_{+} $ of $\mathcal{J}^{+}$. So that $u_{+}$ is a weak solution of \eqref{problem2}.
	
	We now prove $u_{+} \geqslant 0$  a.e. in $\mathbb{R}^{n}$.
	Taking $\phi= u_{+}^{-} $ in \eqref{17}, we have 
	\[\begin{aligned}
		0 & =\left\langle\nabla \mathcal{J}^{+}(u_{+}), u_{+}^{-}\right\rangle \\ & =\int_{\Omega}\nabla u_{+} \cdot \nabla u_{+}^{-}+\int_{\mathbb{R}^{2 n}} \frac{(u_{+}(x)-u_{+}(y))\left(u_{+}^{-}(x)-u_{+}^{-}(y)\right)}{|x-y|^{n+2 s}} d x d y-\int_{\Omega} f_{a}^{+}(x, u_{+}) u_{+}^{-} d x \\ & =\int_{\Omega}\left|\nabla u_{+}^{-}\right|^{2} d x+\int_{\mathbb{R}^{2 n}} \frac{(u_{+}(x)-u_{+}(y))\left(u_{+}^{-}(x)-u_{+}^{-}(y)\right)}{|x-y|^{n+2 s}} d x d y \\ & =\left\|u_{+}^{-}\right\|_{ \mathcal{X}^{1,2}(\Omega )}^{2}-\int_{\mathbb{R}^{2 n}} \frac{u_{+}^{+}(x) u_{+}^{-}(y)+u_{+}^{+}(y) u_{+}^{-}(x)}{|x-y|^{n+2 s}} d x d y \geqslant\left\|u_{+}^{-}\right\|_{ \mathcal{X}^{1,2}(\Omega )}^{2} .
	\end{aligned}\]
	So, $u_{+} \geqslant 0$ a.e. in $\Omega$. Thus, $u_{+}$ is also a weak solution of \eqref{problem} and $\mathcal{J}(u_{+})=\mathcal{J}^{+}(u_{+})$.
	
	Similarly, we can obtain a non-positive weak solution $0 \not \equiv u_{-} \in \mathcal{X}^{1,2}(\Omega)$.
\end{proof}

\subsection{Infinitely many solutions under symmetry condition}\label{Sec:Infinitely many solutions under symmetry condition} \setcounter{equation}{0}
As is well known, Fountain Theorem \cite{Bartsch93} provides the existence of an unbounded sequence of critical value for a $C^1$ invariant functional. In this subsection, we apply Fountain Theorem to  obtain infinitely many weak solutions of problem~\eqref{problem}.

Choosing $G:=\mathbb{Z}/2=\{1,-1\}$ as the action group on $\mathcal{X}^{1,2}(\Omega)$, $X_{j}:=\mathbb{R}e_{j}$ where $\{e_{j}\}_{j\in \mathbb{N}}$ is defined as eigenfunctions in Proposition~\ref{Prop:eigen} and $V:=\mathbb{R}$, it is easy to see that $\mathcal{X}^{1,2}(\Omega)$ 
 satisfies the following conditions:
there is a compact group $G$ acting isometrically on  $\mathcal{X}^{1,2}(\Omega)= \overline{\oplus_{j \in \mathbb{N}} X_{j}}$, the spaces $X_{j}$ are invariant and there exists a finite dimensional space $V$ such that, for every $j \in \mathbb{N}, X_{j} \simeq V$ and the action of $G$ on $V$ is admissible.

Here we use  Borsuk-Ulam Theorem \cite{Borsuk1933DreiS} to prove $G$ is admissible on $\mathbb{ R }$.
%\begin{itemize}
%	\item [(1)] $\|gu\|_{\mathcal{X}^{1,2}(\Omega)}=\|u\|_{\mathcal{X}^{1,2}(\Omega)}$, for any $g\in G$ and $u \in \mathcal{X}^{1,2}(\Omega)$.
%	\item [(2)]  $gX_{j}=X_{j}$ for any $g\in G$.
%	\item [(3)] $\mathbb{R}e_{j}\simeq \mathbb{R}$, for every $j \in \mathbb{N}$. 
%	\item [(4)] the action of $G$ on $\mathbb{R}$ is admissible. It is easy to check thanks to Borsuk-Ulam Theorem \cite{Borsuk1933DreiS}.
%\end{itemize}
While, by (S), $\mathcal{J}$ is an invariant functional for any action $g\in G$. 
%In fact,
%\begin{equation*}
%	\begin{aligned}
	%		\mathcal{J}(-u)&=\frac{1}{2}\|-u\|^{2}_{\mathcal{X}^{1,2}(\Omega)} -\frac{\lambda}{2}\int_{\Omega}|-u(x)|^{2}dx -\int_{\Omega}F(x,-u)dx\\
	%		&=\frac{1}{2}\|u\|^{2}_{\mathcal{X}^{1,2}(\Omega)} -\frac{\lambda}{2}|u|_{2}^{2} -\int_{\Omega}\int_{0}^{-u} f(x,\tau)\, d\tau dx\\
	%		&=\frac{1}{2}\|u\|^{2}_{\mathcal{X}^{1,2}(\Omega)} -\frac{\lambda}{2}|u|_{2}^{2} -\int_{\Omega}\left(-\int_{0}^{u} f(x,-s)\, ds\right) dx\\
	%		&=\frac{1}{2}\|u\|^{2}_{\mathcal{X}^{1,2}(\Omega)} -\frac{\lambda}{2}|u|_{2}^{2} -\int_{\Omega}\int_{0}^{u} f(x,s)\, ds dx\\
	%		&=\frac{1}{2}\|u\|^{2}_{\mathcal{X}^{1,2}(\Omega)} -\frac{\lambda}{2}|u|_{2}^{2} -\int_{\Omega}F(x,u)dx=\mathcal{J}(u).
	%	\end{aligned}
%\end{equation*}
And the $(PS)_{c}$ condition is obtained by Proposition~\ref{Prop:PSc}.
Now we just need to verify the functional $\mathcal{J}$ satisfies Fountain geometric structures:
\begin{itemize}
	\item [(FG)] for every $k\in \mathbb{N}$, there exists $\rho_{k}>\gamma_{k}>0$ such that
	\begin{itemize}
		\item [(i)] $a_{k}:=\max \left\{\mathcal{J}(u): u \in Y_{k},\|u\|_{\mathcal{X}^{1,2}(\Omega)}=\rho_{k}\right\} \leqslant 0$,
		\item [(ii)]  $b_{k}:=\inf \left\{\mathcal{J}(u): u \in Z_{k},\|u\|_{\mathcal{X}^{1,2}(\Omega)}=\gamma_{k}\right\} \rightarrow +\infty$, $k \rightarrow +\infty$.
	\end{itemize}
\end{itemize} 
where $Y_{k}, Z_{k}$ are defined in \eqref{space:YZ}.
We first give the following lemma.

\begin{Lemma}\label{Lamma Fountain}
	Let $1 \leqslant q<2^{*}$ and, for any $k \in \mathbb{N}$, let
	\[\beta_{k}:=\sup \left\{\|u\|_{L^q(\Omega)}: u \in Z_{k},\|u\|_{\mathcal{X}^{1,2}(\Omega)}=1\right\} .\]
	Then, $\beta_{k} \rightarrow 0$ as $k \rightarrow \infty$.
\end{Lemma}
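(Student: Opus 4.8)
The plan is to argue by contradiction, using the compact embedding $\mathcal{X}^{1,2}(\Omega) \hookrightarrow\hookrightarrow L^q(\Omega)$ together with the fact that the subspaces $Z_k$ form a decreasing nested sequence whose ``limit'' is trivial. This is the standard argument of Bartsch (see also \cite[Lemma 3.8]{Willembook}), adapted to our mixed space.

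\smallskip
\textbf{Step 1: Monotonicity and existence of the limit.} Since $Z_{k+1} \subset Z_k$ for every $k$ by the definition in \eqref{space:YZ}, the sequence $(\beta_k)$ is non-increasing and bounded below by $0$; hence $\beta_k \to \beta \geqslant 0$ as $k \to \infty$, and it suffices to show $\beta = 0$.

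\smallskip
\textbf{Step 2: Extracting near-optimizers.} For each $k$, by definition of the supremum there exists $u_k \in Z_k$ with $\|u_k\|_{\mathcal{X}^{1,2}(\Omega)}=1$ and $\|u_k\|_{L^q(\Omega)} \geqslant \beta_k - \tfrac1k \geqslant \beta - \tfrac1k$. Since $(u_k)$ is bounded in the Hilbert space $\mathcal{X}^{1,2}(\Omega)$, up to a subsequence (still denoted $(u_k)$) we have $u_k \rightharpoonup u$ in $\mathcal{X}^{1,2}(\Omega)$ for some $u \in \mathcal{X}^{1,2}(\Omega)$.

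\smallskip
\textbf{Step 3: Identifying the weak limit as $0$.} Fix any $j \in \mathbb{N}$. For all $k \geqslant j$ we have $u_k \in Z_k \subset Z_j$, and since $Z_j$ is weakly closed (it is a closed subspace of a Hilbert space) we get $u \in Z_j$. As this holds for every $j$, $u \in \bigcap_{j \in \mathbb{N}} Z_j$. But $\bigcap_j Z_j = \{0\}$: indeed, writing $u = \sum_{i} c_i e_i$ in the orthonormal basis $\{e_i\}$ of $L^2(\Omega)$ (recall \eqref{9}, \eqref{eigenvalue}), membership in $Z_j$ forces $c_i = 0$ for all $i < j$, so letting $j \to \infty$ gives $u = 0$.

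\smallskip
\textbf{Step 4: Conclusion via compactness.} By Proposition~\ref{Prop: continuous and compact embedding}, the embedding $\mathcal{X}^{1,2}(\Omega) \hookrightarrow L^q(\Omega)$ is compact for $q \in [1,2^*)$, so $u_k \to u = 0$ strongly in $L^q(\Omega)$, i.e. $\|u_k\|_{L^q(\Omega)} \to 0$. Combined with $\|u_k\|_{L^q(\Omega)} \geqslant \beta - \tfrac1k$, this yields $\beta \leqslant 0$, hence $\beta = 0$ and therefore $\beta_k \to 0$.

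\smallskip
The only mild subtlety — and the ``main obstacle'' such as it is — is Step 3: one must make sure that the intersection of the $Z_k$ is trivial, which relies on the $e_i$ forming an orthonormal basis of $L^2(\Omega)$ (established in Proposition~\ref{Prop:eigen}), and one must invoke weak closedness of each $Z_j$ rather than of the moving family $Z_k$. Everything else is a routine diagonal/subsequence argument together with the compact embedding already recorded in Proposition~\ref{Prop: continuous and compact embedding}.
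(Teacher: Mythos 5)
Your proof is correct and follows essentially the same route as the paper's: monotonicity of $\beta_k$, near-maximizers of unit $\mathcal{X}^{1,2}$-norm, identification of the weak limit as an element of $\bigcap_k Z_k=\{0\}$ via weak closedness, and the compact embedding of Proposition~\ref{Prop: continuous and compact embedding} to conclude. The only cosmetic differences are that the paper selects $u_k$ with $\|u_k\|_{L^q(\Omega)}>\beta_k/2$ rather than $\beta_k-\tfrac1k$, and that you spell out (a bit more carefully than the paper) why the intersection of the $Z_j$ is trivial.
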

\begin{proof}
	Since $Z_{k+1} \subset Z_{k}$, $ \beta_{k}>0$ is nonincreasing. Hence, there exist $\beta \in \mathbb{R}$ such that
	$\beta_{k} \rightarrow \beta\geqslant 0, \quad k \rightarrow+\infty.$
	Moreover, by definition of $\beta_{k}$, for any $k \in \mathbb{N}$ there exists $u_{k} \in Z_{k}$ such that
	\begin{equation}\label{11}
		\left\|u_{k}\right\|_{\mathcal{X}^{1,2}(\Omega)}=1 \text { and }\left\|u_{k}\right\|_{L^q(\Omega)}>\beta_{k} / 2.
	\end{equation}
	Since $\mathcal{X}^{1,2}(\Omega)$ is a Hilbert space, there exist $u_{\infty} \in \mathcal{X}^{1,2}(\Omega)$ and a subsequence of $u_{k}$ (still denoted by $u_{k}$ ) such that $u_{k} \rightharpoonup u_{\infty}$ in  $\mathcal{X}^{1,2}(\Omega)$. Since each $Z_{k}$ is convex and closed, hence it is closed for the weak topology. Consequently,
	$u_{\infty} \in \cap_{k=1}^{+\infty} Z_{k}=\{0\}$.
	By Proposition~\ref{Prop: continuous and compact embedding}, we get
	$u_{k} \rightarrow 0 $ in $ L^{q}(\Omega)$. Together with \eqref{11}
	we get that $\beta_{k} \rightarrow 0$ as $k \rightarrow+\infty$.
\end{proof}
%We now prove Theorem~\ref{Thm: infinitely many sol Theorem}. 
%We remark that Theorem~\ref{Thm: infinitely many sol Theorem} still holds under the usual assumption (C), (H1), (AR) and (S).
\begin{proof}[\bf {Proof of Theorem~\ref{Thm: infinitely many sol Theorem}}]
We just prove that $\mathcal{J}$ has Fountain geometric feature (FG).

 Firstly, we verify the assumption (ii) . Since $f$ satisfies (H1), there exists a constant $C>0$ such that
	\begin{equation*}
		\begin{aligned}
			|F(x,u)| \leqslant \int_{0}^{u} \left|f(x,s)\right| ds \leqslant \int_{0}^{u} C_{f}(1+|s|^{q-1}) ds 
			\leqslant C(1+|u|^{q})
		\end{aligned}
	\end{equation*}
	for a.e. $x \in \bar{\Omega}$ and $u \in \mathbb{R}$. 
	
	Take any $k \in \mathbb{N}$. Then, for any $u \in Z_{k} \backslash\{0\}$, by Lemma~\ref{Lem:MP}, we obtain
	\[\begin{aligned}
		\mathcal{J}(u) &
		\geqslant \frac{\varsigma_{1}}{2}\|u\|_{\mathcal{X}^{1,2}(\Omega)}^{2}-C|u|_{q}^{q}-C|\Omega| \\
		& =\frac{\varsigma_{1}}{2}\|u\|_{\mathcal{X}^{1,2}(\Omega)}^{2}-C\left|\frac{u}{\|u\|_{\mathcal{X}^{1,2}(\Omega)}}\right|_{q}^{q}\|u\|_{\mathcal{X}^{1,2}(\Omega)}^{q}-C|\Omega| \\
		&\geqslant \frac{\varsigma_{1}}{2}\|u\|_{\mathcal{X}^{1,2}(\Omega)}^{2}-C \beta_{k}^{q}\|u\|_{\mathcal{X}^{1,2}(\Omega)}^{q}-C|\Omega|\\
		&=\|u\|_{\mathcal{X}^{1,2}(\Omega)}^{2}\left(\frac{\varsigma_{1}}{2}-C \beta_{k}^{q}\|u\|_{\mathcal{X}^{1,2}(\Omega)}^{q-2}\right)-C|\Omega|
	\end{aligned}\]
	where $\beta_{k}$ is defined as in Lemma \ref{Lamma Fountain} . Choosing
	\[\gamma_{k}=\left( \frac{q}{ \varsigma_{1} } C \beta_{k}^{q}\right)^{-1 /(q-2)},\]
	it is easy to see that $\gamma_{k} \rightarrow+\infty$ as $k \rightarrow+\infty$, thanks to Lemma~\ref{Lamma Fountain} and the fact that $q>2$. As a consequence, we get that for any $u \in Z_{k}$ with $\|u\|_{\mathcal{X}^{1,2}(\Omega)}=\gamma_{k}$,
	\[\mathcal{J}(u) \geqslant \varsigma_{1} \left(\frac{1}{2}-\frac{1}{q}\right) \gamma_{k}^{2}-C|\Omega| \rightarrow+\infty\]
	as $k \rightarrow+\infty$.
	
It remains to verify the assumption (i). Since, on the finite dimensional space $Y_{k}$ all norms are equivalent, by \eqref{finite dim eq norm}, \eqref{H3} and Proposition~\ref{Prop: continuous and compact embedding}, we have, for any $u\in Y_{k}$
	\[\begin{aligned}
		\mathcal{J}(u) &\leqslant \frac{1}{2} \left( \|u\|^{2}_{\mathcal{X}^{1,2}(\Omega)} + |a|_{\frac{n}{2}}|u|_{2^{*}}^{2} \right) - M|u|_{2}^{2}+ C_{M} |\Omega| \\
		&\leqslant \frac{1}{2} \|u\|_{\mathcal{X}^{1,2}(\Omega)}^{2} \left( 1 + \tilde{C}_{k, 2^{*}}^{2}|a|_{\frac{n}{2}} -M C_{k, 2^{*}}^{2} \right) + C_{M} |\Omega|.
	\end{aligned}\]
Take $M$ and $\|u\|_{\mathcal{X}^{1,2}(\Omega)}=\rho_{k}>\gamma_{k}>0$ large enough. Then $\mathcal{J}(u) \leqslant 0$, due to the fact $\Omega$ is bounded.

In conclusion, $\mathcal{J}$ has infinitely many critical points $\{u_{j}\}_{j\in \mathbb{N}}$ and $\mathcal{J}(u_{j}) \rightarrow +\infty$ as $j\rightarrow \infty$ applying Fountain Theorem. 		
\end{proof}
%\begin{remark}
%	All of the above existence and multiplicity  theorems of weak solutions  are valid for $a(x) \in L^{\infty}(\Omega)$.
%\end{remark}
\section{Regularity of weak solutions}\label{Sec: Regularity of weak solution }
\setcounter{equation}{0}
In this section, we discuss the regularity theory of weak solution to problem~\eqref{problem}. We first prove the global boundedness of weak solutions. Because the embedding \eqref{imbedding when n=1,2} is continuous for $n=1$ or $2$, it suffices to deal with the case $n \geqslant 3$. 
\subsection{Global boundedness} 
We first prove the $L^{\infty}$-regularity, see Theorem~\ref{Thm: boundedness}, of weak solutions to  problem~\eqref{problem} with the term $-a(x)u + f(x,u)$.
\subsubsection{$L^{\infty}$-regularity for $-a(x)u + f(x,u)$}
The method we take is \textit{Moser iteration} (see for example, \cite{HLbook,DMVbook}), which is based on the following fact:
if there exists a constant M (independent of $p$), such that $ |u|_{p} \leqslant M$ for a sequence $p \rightarrow \infty$, then $u \in L^{\infty}(\Omega)$.
Inspired by this , for given $\beta >1, T>0$, we define an auxiliary function $\varphi (t) : \mathbb{ R } \rightarrow \mathbb{ R }^{+}_{0}$ as 
\begin{equation}\label{29}
	\varphi ( t ) = \left\{ \begin{array} { l l } - \beta T ^ { \beta - 1 } ( t + T ) + T ^ { \beta } , & \text{if } t \leqslant - T, \\ | t | ^ { \beta } , & \text{if } - T < t < T ,\\ \beta T ^ { \beta - 1 } ( t - T ) + T ^ { \beta } , & \text{if } t \geqslant T . \end{array} \right. 
\end{equation}

Note $\varphi$ is convex. Suppose $u \in \mathcal{X}^{1,2}(\Omega)$. It is easy to check $\varphi(u) \varphi^{'}(u) \in \mathcal{X}^{1,2}(\Omega)$. Then, $\varphi(u) \varphi^{'}(u)$ can be a test function and $\int_{\Omega} a u \varphi(u) \varphi^{'}(u)dx < \infty$ is well posed.

\begin{proof}[\bf{Proof of Theorem~\ref{Thm: boundedness}}]
	We first prove the theorem under condition (1). 
	 Since $u$ is a weak solution, testing equation~\eqref{problem} for $u$ by $\varphi(u) \varphi^{'}(u)$, we obtain
	 \begin{equation}\label{21}
	 	\begin{aligned}
	 		&\quad \int_{ \mathbb { R } ^ { n } }  \nabla u \cdot \nabla (\varphi(u) \varphi^{'}(u)) ~ d x + \int_{\mathbb{R}^{n}}  \varphi(u) \varphi^{'}(u) (-\Delta)^{s}u ~d x  \\
	 		&=  \int_{\Omega} (-a(x) u + f(x,u))\varphi(u) \varphi^{'}(u)~ dx
	 	\end{aligned}
	 \end{equation}
	
	By the convexity of $\varphi$ and the definition of $(-\Delta)^{s}$, we have  $(-\Delta)^{s} \varphi (u) \leqslant \varphi^{'}(u) (-\Delta)^{s}u $. Using \textit{fractional Green's formula}, we obtain  
	\begin{equation}\label{23}
		\int_{\mathbb{R}^{n}}  \varphi(u) \varphi^{'}(u) (-\Delta)^{s}u ~d x \geqslant \int_{\mathbb{R}^{n}}  \varphi(u) (-\Delta)^{s} \varphi (u) ~d x
		=\left\| (- \Delta )^{\frac{s}{2}} \varphi(u) \right\| _ {L^{2}\left(\mathbb{R}^{n}\right)}^{2}
		%		= \frac{1}{2}c(n,s)[\varphi(u)]_{s}^{2}
		\geqslant 0.
	\end{equation}
		
	Since $\varphi(u),~\varphi^{''}(u) \geqslant 0$, we have
	\begin{equation*}\label{24}
		\int_{ \mathbb { R } ^ { n } }  \nabla u \cdot \nabla (\varphi(u) \varphi^{'}(u)) ~ d x \geqslant \int_{ \mathbb { R } ^ { n } }  |\nabla u|^{2} |\varphi^{'}(u)|^{2} ~ d x.
	\end{equation*}
%	\begin{equation*}
%		\begin{aligned}
%			\int_{ \mathbb { R } ^ { n } }  \nabla u \cdot \nabla (\varphi(u) \varphi^{'}(u)) ~ d x& = \int_{ \mathbb { R } ^ { n } }  |\nabla u|^{2} |\varphi^{'}(u)|^{2} ~ d x + \int_{ \mathbb { R } ^ { n } }  |\nabla u|^{2}\varphi (u) \varphi^{''}(u) ~ d x \\
%			& \geqslant \int_{ \mathbb { R } ^ { n } }  |\nabla u|^{2} |\varphi^{'}(u)|^{2} ~ d x,
%		\end{aligned}
%	\end{equation*}
	Together with \eqref{21}-\eqref{23}, we obtain 
	\begin{equation}\label{27}
	\int_{ \mathbb { R } ^ { n } }  |\nabla u|^{2} |\varphi^{'}(u)|^{2} ~ d x \leqslant \int_{ \Omega } (-a(x)u + f(x,u)) \varphi(u) \varphi^{'}(u) ~ dx.
	\end{equation}
	
	Notice $a(x),~\varphi(u),~ u\varphi^{'}(u) \geqslant 0$,
	\begin{equation}\label{22}
		\int_{\Omega} -a(x) u \varphi(u) \varphi^{'}(u) ~dx \leqslant 0.
	\end{equation}
    Thus, by Sobolev-Poincar\'{e} inequality, 
    \begin{equation}\label{25}
    	\begin{aligned}
    		&\quad \| \varphi (u)\|_{L^{2^{*}}(\Omega)}^{2} \leqslant C(n, \Omega) \| \nabla \varphi (u) \|_{L^{2}(\Omega)} ^{2} = C(n, \Omega) \int_{\Omega} | \nabla u ~\varphi ^{'} (u)  |^{2} ~ dx.
%    		\\&\leqslant C(n, \Omega) \int_{ \Omega }  f(x,u) \varphi(u) \varphi^{'}(u) ~ dx \leqslant C \int_{ \Omega }  (1+ |u|^{q-1})  \varphi(u) \varphi^{'}(u) ~ dx
    	\end{aligned}
    \end{equation}
%    where $C=C(c_{f},n,\Omega)$.
    
	Noticing $| \varphi^{'}(u)| \leqslant \beta  |u|^{\beta -1}$
	, $|u \varphi^{'}(u)| \leqslant \beta  \varphi(u)$ 
	and  $\varphi(u) \leqslant |u|^{\beta}$, we obtain by (H1)
	\begin{equation}
		\begin{aligned}
			 \int_{ \Omega }  f(x,u) \varphi(u) \varphi^{'}(u) ~ dx
			\leqslant c_{f} \beta \int_{ \Omega } |u|^{2 \beta -1}  + (\varphi(u))^{2} |u|^{q-2} ~ dx.
		\end{aligned}
	\end{equation}

\textbf{Step 1.}
In this step, we are devoted to finding the initial state of the Moser iteration. We claim that $u \in L^{2^{*}\beta_{1} } (\Omega)$ where $\beta_{ 1 } = \frac{2^{*}+1}{2}$.

In fact, for the fixed $\beta_{1}>1$, we have
\begin{equation}\label{28}
	\begin{aligned}
		& \quad \int_{\Omega} (\varphi(u))^{2} |u|^{q-2} ~ dx
		\\& \leqslant \int_{\Omega \cap \{|u|\leqslant R\}} (\varphi(u))^{2} |u|^{q-2} ~ dx + \int_{\Omega \cap \{|u|> R\}} (\varphi(u))^{2} |u|^{q-2} ~ dx
		\\& \leqslant \int_{\Omega \cap \{|u|\leqslant R\}} \frac{(\varphi(u))^{2}}{|u|} R^{q-1 }~ dx + \left(\int_{\Omega} (\varphi(u))^{2^{*}} ~ dx \right)^{\frac{2}{2^{*}}}  \int_{ \{|u|> R\}} \left( |u|^{\frac{ 2^{*}(q-2)} {2^{*}-2}} ~ dx \right)^{\frac{2^{*}-2}{2^{*}}}
	\end{aligned}
\end{equation}
in which we can choose $R$ large enough such that
\[ \int_{ \{|u|> R\}} \left( |u|^{\frac{ 2^{*}(q-2)} {2^{*}-2}} ~ dx \right)^{\frac{2^{*}-2}{2^{*}}} \leqslant \frac{1}{2C(n,\Omega) c_{f} \beta_{1}}. \]

Combining \eqref{27}-\eqref{28}, we have 
\begin{equation}\label{35}
	\frac{1}{2}\| \varphi (u)\|_{L^{2^{*}}(\Omega)}^{2} \leqslant
	C \beta_{1} \left( \int_{ \Omega } |u|^{2 \beta_{1} -1}~dx + \int _{\Omega} |u|^{2 \beta_{1} -1} R^{q-1} ~dx \right)
\end{equation}
where $C=C(n,\Omega, c_{f})$.

This implies $u \in L^{2^{*}\beta_{1} } (\Omega)$ where $ \beta_{1}= \frac{2^{*}+1}{2}$, if $T\rightarrow \infty$ in the definition of $\varphi$.

\textbf{Step 2.}
In this step, we set up the iterative formula.

We first claim that 
\begin{equation}\label{31}
	\left( 1+ \int_{\Omega} |u|^{2^{*} \beta} ~ dx\right)^{\frac{1}{2^{*} (\beta -1 )}} \leqslant
	(C\beta)^{\frac{1}{2(\beta-1)}} \left( 1+ \int_{\Omega} |u|^{2\beta + 2^{*} -2} ~ dx \right)^{ \frac{1}{2(\beta -1)}}.
\end{equation}

In fact, by
     \begin{equation}
     	\begin{aligned}
     		&\quad \int_{ \Omega }  f(x,u) \varphi(u) \varphi^{'}(u) ~ dx
     		\leqslant \int_{ \Omega }  c_{f}~ \beta (|u|^{2\beta -1}+ |u|^{2 \beta +q -2})   ~ dx\\
     		&\leqslant  c_{f}~ \beta \left(\int_{ \Omega }  |u|^{2\beta -1} ~dx + \left|\Omega \cap \left\{|u| \leqslant 1 \right\}\right| +  \int_{ \Omega \cap \left\{|u| > 1 \right\} } |u|^{2 \beta +2^{*} -2}~ dx\right)   \\
     		&\leqslant  C~ \beta \left(1+  \int_{ \Omega \cap \left\{|u| > 1 \right\} } |u|^{2 \beta +2^{*} -2}~ dx\right),
     	\end{aligned}
     \end{equation}   where $C= C(n, \Omega, c_{f})$.
	The last inequality follows from
	\begin{equation*}
		\begin{aligned}
			\int_{\Omega} |u|^{2\beta -1} ~dx \leqslant \left( \int_{\Omega} |u|^{2\beta + 2^{*} - 2} ~dx \right)^{\frac{2\beta -1}{2\beta + 2^{*} - 2}} ~ |\Omega|^{\frac{2^{*}-1}{2\beta + 2^{*} - 2}}
%			\\&\leqslant \frac{2\beta -1}{2\beta + 2^{*} - 2} \int_{\Omega} |u|^{2\beta + 2^{*} - 2} ~dx + \frac{2^{*}-1}{2\beta + 2^{*} - 2} |\Omega|
%			\\&
			\leqslant \int_{\Omega} |u|^{2\beta + 2^{*} - 2} ~dx + |\Omega|
		\end{aligned}		
	\end{equation*}   (due to H\"{o}lder inequality and Young inequality).
   
    Together with \eqref{27}-\eqref{25}, we have 
   \begin{equation}\label{30}
   	\| \varphi (u)\|_{L^{2^{*}}(\Omega)}^{2}
   	\leqslant 
   	C \beta \left(1+  \int_{ \Omega } |u|^{2 \beta +2^{*} -2}~ dx\right).
   \end{equation}
  
%   In \eqref{30}, let $T \rightarrow \infty$ and multiply to the $2^{*}$-th power. 
   Therefore, let $T \rightarrow \infty$, we have
\begin{equation*}
	\left(1+ \int_{\Omega} |u|^{2^{*} \beta} ~ dx\right)^{2} \leqslant
	2+2( C\beta)^{2^{*}} \left( 1+ \int_{\Omega} |u|^{2\beta + 2^{*} -2} ~ dx \right)^{2^{*}},
\end{equation*}
using $(a+b)^{2} \leqslant 2(a^{2}+ b^{2})$. Taking the $2\cdot 2^{*}  (\beta -1)$-th root, we have \eqref{31}.

Define the parameters $(\beta_m)_{m\in \mathbb{Z}_+}$ iteratively by 
\begin{equation}\label{33}
	 2 \beta_{m+1} + 2^{*} -2 = 2^{*} \beta_{m}.
\end{equation}
Thus, $2 (\beta_{m+1} -1) = 2^{*} (\beta_{m}-1)$. 

Taking $\beta= \beta_{m+1}$ in \eqref{31}, we have the iterative formula
\begin{equation}\label{36}
	\left( 1+ \int_{\Omega} |u|^{2^{*} \beta_{m+1}} ~ dx\right)^{\frac{1}{2^{*} (\beta_{m+1} -1 )}} \leqslant
	C_{m+1}~^{\frac{1}{2(\beta_{m+1}-1)}} \left( 1+ \int_{\Omega} |u|^{ 2^{*} \beta_{m} } ~ dx \right)^{ \frac{1}{2^{*}(\beta_{m} -1)}}
\end{equation}
where $C_{m+1}=C\beta_{m+1}$.

\textbf{Step 3.}
We deduce that, for every $\beta_{m+1}$, $u \in L^{2^{*} \beta_{m+1}}(\Omega )$.

In fact, by performing $m$-th iterations, we have
\[\left( 1+ \int_{\Omega} |u|^{2^{*} \beta_{m+1}} ~ dx\right)^{\frac{1}{2^{*} (\beta_{m+1} -1 )}} \leqslant
\prod \limits _{k=2}^{m+1} C_{k}~^{\frac{1}{2(\beta_{k}-1)}} \left( 1+ \int_{\Omega} |u|^{ 2^{*} \beta_{1} } ~ dx \right)^{ \frac{1}{2^{*}(\beta_{1} -1)}}.\]

Now we turn to prove
\begin{equation}\label{32}
	\prod \limits _{k=2}^{m+1} C_{k}~^{\frac{1}{2(\beta_{k}-1)}} \leqslant C_{0}.
\end{equation} 

Denote $\bar{q}:= \frac{2}{2^{*}} < 1$. Since
\[ \beta_{m+1} = \left( \frac{2^{*}}{2} \right)^{m} (\beta_{1}-1)+1 =  \left( \frac{2^{*}}{2} \right)^{m +1}-  \frac{1}{2} \left( \frac{2^{*}}{2} \right)^{m } +1  \leqslant 2 \bar{q}~^{-(m+1)}, \]
we have
$C_{k}= C\beta_{k} \leqslant 2C \bar{q}~^{-k} $. We still denote $2C$ by $C$. 

Thus, by \eqref{33}, we have
\begin{equation*}
	\begin{aligned}
		\prod \limits _{k=2}^{m+1} C_{k}~^{\frac{1}{2(\beta_{k}-1)}}
		\leqslant \prod \limits _{k=2}^{m+1} \left( C \bar{q}~^{-k} \right)^{\frac{1}{2(\beta_{k}-1)}}
		=  \prod \limits _{k=2}^{m+1} \left( C \bar{q}~^{-k} \right)^{\frac{ \bar{q}^{k-1}}{2(\beta_{1}-1)}}
		= \left(C ^{ \sum \limits _{k=2}^{m+1} \bar{q}^{k-1} } \cdot \bar{q}^{ - \sum \limits _{k=2}^{m+1} k \cdot \bar{q}~^{k-1} } \right)^{\frac{1}{2(\beta_{1}-1)}}.
	\end{aligned}
\end{equation*}
Since
$ \sum \limits _{k=2}^{m+1} \bar{q}^{k-1} $ is a geometric sequence with common ratio $ \bar{q} <1$ and 
\[\sum \limits _{k=2}^{m+1} k \cdot \bar{q}~^{k-1}=\left( \sum \limits _{k=2}^{m+1}  \bar{q}~^{k} \right)^{'} \]
is a power series with base number  $ \bar{q} <1$, we have \eqref{32}.

Let $m\rightarrow \infty$. We proved Theorem~\ref{Thm: boundedness} under assumption (1).

The proof under condition (2) is quite similar to that  under condition (1). The only differences are:
\begin{itemize}
	\item [(i)] the estimate \eqref{22} is substituted by 
	\[\int_{\Omega} -a(x) u \varphi(u) \varphi^{'}(u) ~dx \leqslant \beta \| a(x) \|_{L^{\infty}(\Omega)} \int_{\Omega} |u|^{2 \beta} dx, \]
	\item [(ii)] the inequality \eqref{35} is substituted by
	$$ \| \varphi (u)\|_{L^{2^{*}}(\Omega)}^{2} \leqslant
	C \beta_{1}  R^{q-2} \left( 1 + \int_{ \Omega } |u|^{2 \beta_{1} }~dx  \right)$$
	 where  $ \beta_{1}= \frac{2^{*}}{2}, C=C_0(n, \Omega, c_{f}, |a|_{\infty} )$.
\end{itemize} 
Thus, we finish the proof of Theorem~\ref{Thm: boundedness}.
\end{proof}
%We remark that it is hard to establish Moser iterative for $a(x) \in L^{\frac{l}{2}}(\Omega), l > n$ with indefinite sign. In fact, since $\bar{l}:= \frac{2l}{l-2}$ (related to the H\"{o}lder conjugate number of $\frac{l}{2}$) is in $(2, 2^{*})$, the right hand side of iterative formula similar to \eqref{36} is NOT bounded.

We now  deduce the boundedness of variational weak solutions obtained in section~\ref{Sec: Global existence results}.
\begin{Corollary}\label{existence and boundedness}
	Assume  $f(x,u)$ satisfies (C), (H1)-(H4).
	\begin{itemize}
		\item [(i)] If $0< a(x) \in L^{\frac{l}{2}}(\Omega) ,l \geqslant n $, then problem~\eqref{problem} admits a bounded Mountain Pass type weak solution. If we further assume $f$ satisfies (S), then problem~\eqref{problem} admits infinitely many bounded Fountain type weak solution.
		\item [(ii)] If $ a(x) \in L^{\infty}(\Omega)$, then problem~\eqref{problem} admits a bounded Mountain Pass (or Linking) weak solution for $\lambda_{1} >0$ (or $\lambda_{1} \leqslant 0$).
	\end{itemize}
\end{Corollary}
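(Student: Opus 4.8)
The plan is to derive Corollary~\ref{existence and boundedness} by feeding the weak solutions produced in Section~\ref{Sec: Global existence results} into the $L^{\infty}$-bound of Theorem~\ref{Thm: boundedness}. As in the rest of this section I would first reduce to $n\geqslant 3$, the lower-dimensional cases being covered by the embeddings \eqref{imbedding when n=1,2}. The one preliminary remark needed to make the two halves fit together is that assumption (H1), which holds for some $q\in(2,2^{*})$, is a particular case of the growth bound \eqref{H1*} (since $(2,2^{*})\subset[2,2^{*}]$); hence Theorem~\ref{Thm: boundedness} is applicable to \emph{any} weak solution of \eqref{problem} the moment the hypothesis on $a$ matches one of its two alternatives.

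For part (i), I would start by checking that $0<a(x)$ forces $\lambda_{1}>0$: for $\|u\|_{L^{2}(\Omega)}=1$ one has $\|u\|_{\mathcal{X}^{1,2}(\Omega)}^{2}+\int_{\Omega}a(x)u^{2}\,dx\geqslant\|\nabla u\|_{L^{2}(\mathbb{R}^{n})}^{2}$, which by the Sobolev--Poincar\'e inequality is bounded below by a positive constant depending only on $\Omega$, so the variational characterization of $\lambda_{1}$ in Proposition~\ref{Prop:eigen}(i) gives $\lambda_{1}>0$. Then Theorem~\ref{Thm: LandMP}(2) supplies a non-trivial Mountain Pass weak solution $u\in\mathcal{X}^{1,2}(\Omega)$, and since $0\leqslant a\in L^{l/2}(\Omega)$ with $l\geqslant n$, alternative (1) of Theorem~\ref{Thm: boundedness} applies and yields $u\in L^{\infty}(\Omega)$. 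If moreover $f$ satisfies (S), then $f$ obeys (C), (H1), (H3), (H4), (S) with $\lambda_{1}>0$, so Theorem~\ref{Thm: infinitely many sol Theorem} produces $\{u_{j}\}_{j\in\mathbb{N}}$ with $\mathcal{J}(u_{j})\to+\infty$; applying alternative (1) of Theorem~\ref{Thm: boundedness} to each $u_{j}$ separately gives $u_{j}\in L^{\infty}(\Omega)$ for every $j$.

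For part (ii), I would invoke the remark following Theorem~\ref{Thm: infinitely many sol Theorem}, according to which all the existence results of Section~\ref{Sec: Global existence results} remain valid for $a\in L^{\infty}(\Omega)$. Thus Theorem~\ref{Thm: LandMP} gives a non-trivial Mountain Pass weak solution when $\lambda_{1}>0$ and a non-trivial Linking weak solution when $\lambda_{1}\leqslant0$ (with (P) added in the band containing $0$). In both situations $a\in L^{\infty}(\Omega)$, so alternative (2) of Theorem~\ref{Thm: boundedness} applies and the solution lies in $L^{\infty}(\Omega)$.

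There is no real obstacle here: the corollary is essentially a bookkeeping assembly of Theorems~\ref{Thm: LandMP}, \ref{Thm: infinitely many sol Theorem} and~\ref{Thm: boundedness}. The only two points that require a moment's attention are the inclusion $q\in(2,2^{*})\subset[2,2^{*}]$ mentioned above, and the observation that strict positivity of $a$ in part (i) does double duty --- it places us in the regime $\lambda_{1}>0$, so that neither assumption (P) nor the Linking construction is needed, and it provides the sign condition $a\geqslant0$ demanded by alternative (1) of Theorem~\ref{Thm: boundedness}.
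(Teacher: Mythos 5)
Your proposal is correct and follows essentially the same route as the paper: the corollary is obtained by combining the existence results (Theorem~\ref{Thm: LandMP}, Theorem~\ref{Thm: infinitely many sol Theorem}) with the $L^{\infty}$-bound of Theorem~\ref{Thm: boundedness}, noting that (H1) with $q\in(2,2^{*})$ is a special case of \eqref{H1*} and that $0<a$ together with the Poincar\'e inequality forces $\lambda_{1}>0$, so alternative (1) (resp.\ (2) for $a\in L^{\infty}$) of Theorem~\ref{Thm: boundedness} applies to each solution produced. Your explicit remarks on the double role of $a>0$ and on needing (P) in the Linking case are accurate and consistent with the paper's (implicit) argument.
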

%\begin{proof}
%	Note that once $a(x) >0$, the first eigenvalue $\lambda_{ 1 }$ related to $ -\Delta +(- \Delta)^{s} + a(x)$ is bigger than $0$. Corollary~\ref{existence and boundedness} can be deduced immediately by Theorem~\ref{Thm: LandMP}, \ref{Thm: infinitely many sol Theorem} and \ref{Thm: boundedness}.
%\end{proof}
\begin{remark}
	When the nonlinearity $f(x,u)$ is critical, we can still deduce that a weak solutions of problem~\eqref{problem} is bounded. However, the global existence of weak solutions is hard to prove. (See \cite[Theorem 1.3 and Theorem 1.4]{SSEE22}.) 
\end{remark}

\subsubsection{$L^{\infty}$-regularity for $-a(x)u + f(x)$}We now prove an $L^{\infty}$-regularity--Theorem~\ref{Thm: boundedness2}, of weak solutions to problem~\eqref{problem} when $f= f(x)$
and  $a(x)$ is not always non-negative.
Noticing the Moser iterative formula is no longer applicable, we use \textit{De Giorgi iteration} (see for example, \cite{HLbook, GTbook}) to prove this theorem.
\begin{proof}[\bf{Proof of Theorem~\ref{Thm: boundedness2}}]
Let $k>0$. Consider $A_{k}= \{ u>k \}$. Set $v=(u-k)^{+} \in \mathcal { X } ^ { 1 , 2 } ( \Omega )$ as the test function. Note $v=u-k, Dv=Du$ a.e. in $A_{k}$ and $v=0, Dv=0$ a.e. in $\{ u \leqslant k \}$.
    
Since $u$ is a weak solution, we have
    \begin{equation*}
    	\int_{ A_{k}}  | D v |^{2} ~ d x + \int_{\mathbb{R}^{2n}} \frac { (u ( x ) - u ( y ) )( v ( x ) - v ( y ) ) } { | x - y | ^ { n + 2 s } } d x d y = \int_{A_{k}} (-au + f) v ~dx.
    \end{equation*}

By some simple calculation, we obtain, for any $ \phi \in \mathcal { X } ^ { 1 , 2 } ( \Omega )$
\[ (\phi ( x ) - \phi ( y ) )( \phi^{+} ( x ) - \phi^{+} ( y )  ) \geqslant |\phi^{+} ( x ) - \phi^{+} ( y )|^{2}, \quad \forall x,y \in \mathbb{R}^{n}.\]
Taking $\phi= u-k $, we have 
$(u ( x ) - u ( y ) )( v ( x ) - v ( y ) ) \geqslant |v(x) - v(y)|^{2}.$
Therefore, 
\begin{equation}\label{37}
	\int_{ A_{k}}  | D v |^{2} ~ d x  \leqslant \int_{A_{k}} (-au + f) v ~dx.
\end{equation}

Since $ gh \leqslant g^{2}+ h^{2}$ for $g>0,h >0$, we have
\begin{equation}\label{38}
	\begin{aligned}
		& \quad \int_{A_{k}} -auv ~dx= \int_{A_{k}} -a(v+k)v ~dx \leqslant  2\int_{A_{k}} |a|(v^{2} + k^{2}) ~dx
		\\& \leqslant 2 \left(|a|_{l} \left(\int_{A_{k}} v^{2^{*}}~dx \right)^{\frac{2}{2^{*}}} |A_{k}|^{1- \frac{2}{2^{*}} -\frac{1}{l} } + k^{2} |a|_{l}~|A(k)|^{1-\frac{1}{l}} \right)
		\\& \leqslant C \left( |a|_{l} \int_{A_{k}} |Dv|^{2} dx ~|A_{k}|^{\frac{2}{n} - \frac{1}{l} } +  k^{2} |a|_{l}~|A(k)|^{1-\frac{1}{l}} \right).
	\end{aligned}
\end{equation}

Using H\"{o}lder inequality and \textit{Young inequality with $\delta$}, we have 
\begin{equation}\label{39}
	\begin{aligned}
		 \int_{A_{k}} f(x) v ~dx &\leqslant |f|_{l} ~|v|_{2^{*}} |A(k)|^{ 1- \frac{1}{2^{*}} -\frac{1}{l} }
		\\& \leqslant C~|f|_{l} \|Dv\|_{L^{2}(A_{k})} |A_{k}|^{ \frac{1}{2} + \frac{1}{n} -\frac{1}{l}}
	    \\& \leqslant C \left( \delta \int_{A_{k}} |Dv|^{2} ~ dx + C_\delta |f|_{l}^{2} |A_{k}|^{1+ \frac{2}{n} -\frac{2}{l} } \right).
	\end{aligned}
\end{equation}
 Note $1+\frac{2}{n}-\frac{2}{l} > 1-\frac{1}{l}$ if $l>\frac{n}{2}$.
 Combining \eqref{37}-\eqref{39}, we have
 \begin{equation*}
 	\begin{aligned}
 		 \int_{ A_{k}}  | D v |^{2} ~ d x  
 		\leqslant C \left(  |A_{k}|^{\frac{2}{n} - \frac{1}{l} } \int_{A_{k}} |Dv|^{2}  +  k^{2} |A(k)|^{1-\frac{1}{l}} + \delta \int_{A_{k}} |Dv|^{2} + C_\delta |f|_{l}^{2} |A_{k}|^{1+ \frac{2}{n} -\frac{2}{l} } \right)
 	\end{aligned}
 \end{equation*}
 where $C=C(n, \Omega, |a|_{l})$. Since $|A_{k}|$ is decreasing with respect to $k$, there exists $k_{0}$ large enough such that \[|A_{k}| < \min \left\{1,  \left(\frac{1}{4C} \right)^{\frac{1}{\frac{2}{n} - \frac{1}{l} } } \right\} \text{ for any } k \geqslant k_{0}.\]
 Take $\delta= \frac{1}{4C}$. For every $k\geqslant k_{0}$, we have
 \begin{equation}\label{40}
 	\begin{aligned}
 		\int_{ A_{k}}  | D v |^{2} ~ d x  \leqslant C k^{2} |A(k)|^{1-\frac{1}{l}} 
 	\end{aligned}
 \end{equation}
where $C=C(n, \Omega, |a|_{l}, |f|_{l}) $.

For $\forall h>k$, we have $A(h) \subset A(k)$. Thus, $\int_{A_{h}} (u-h)^{2} \leqslant \int_{A_{k}} (u-k)^{2}$ and 
\[ |A(h)|= |\{ u-k > h-k \}| \leqslant \int_{A(h)} \frac{(u-k)^{2}}{ (h-k)^{2} } \leqslant  \frac{1}{ (h-k)^{2}} \int_{A(k)} (u-k)^{2}. \]
Note
\[\int_{A_{k}} (u-k)^{2} = \int_{A_{k}} v^{2} \leqslant C \left( \int_{A_{k}} v^{2^{*}} \right)^{\frac{2}{2^{*}}} |A(k)|^{1- \frac{2}{2^{*}}} \leqslant C \int_{A_{k}} |Dv|^{2} |A(k)|^{\frac{2}{n}}.\]
Together with \eqref{40}, we have 
\begin{equation*}\label{41}
	\int_{A_{k}} (u-k)^{2} \leqslant C  k^{2} |A(k)|^{1 + \frac{2}{n} -\frac{1}{l} }  \leqslant C  k^{2} |A(k)|^{1 + \epsilon}, \quad \epsilon < \frac{2}{n} -\frac{1}{l}
\end{equation*}
for $\forall k \geqslant k_{0}$.
Thus, for $\forall h>k \geqslant k_{0}$,  we have
\begin{equation*}
	\begin{aligned}
		 \int_{A_{h}} (u-h)^{2} &\leqslant C  h^{2} |A(h)|^{1 + \epsilon} 
		 \leqslant C  h^{2} \left( \frac{1}{ (h-k)^{2}} \int_{A(k)} (u-k)^{2} \right)^{1 + \epsilon} 
		\\& \leqslant C \frac{h^{2}}{ (h-k)^{2}} \frac{1}{ (h-k)^{2\epsilon}} \left( \int_{A(k)} (u-k)^{2} \right)^{1 + \epsilon} 
	\end{aligned}
\end{equation*}
or
\begin{equation}\label{43}
	\| (u-h)^{+} \|_{L^{2}(\Omega)} \leqslant C \frac{h}{ h-k } \frac{1}{ (h-k)^{\epsilon}} \| (u-k)^{+} \|_{L^{2}(\Omega)}^{1 + \epsilon}.
\end{equation}

Define the iterative parameters $(k_j)_{j\in \mathbb{N}}$ as 
\begin{equation*}\label{42}
	k_{j}= k_{0} + k (1- \frac{1}{2^{j}}), \quad j=0,1,2, \cdots.
\end{equation*}
Note $k_{j} \leqslant k_{0} + k$, $k_{j} - k_{j-1} = \frac{k}{2^{j}} $ and $k_{j} \rightarrow k_{0} +k$ as $j \rightarrow \infty$.

Set $\varphi (k) = \| (u-k)^{+}\|_{L^{2}(\Omega)}$. Let $h= k_{j}, k=k_{j-1}$ in \eqref{43}. We have the iterative formula
\begin{equation}\label{44}
	\begin{aligned}
		\varphi (k_{j}) 
%		 \leqslant C \frac{k_{j}}{k_{j}-k_{j-1}} \frac{1}{( k_{j}-k_{j-1} )^{\epsilon}} \varphi(k_{j-1})^{1+\epsilon}
		 \leqslant C \frac{2^{j}(k_{0}+k)}{k} \frac{2^{\epsilon j}}{k^{\epsilon}} \varphi(k_{j-1})^{1+\epsilon}.
	\end{aligned}
\end{equation}

We claim that for any $j=0,1,2,\cdots$,
\begin{equation}\label{45}
		\varphi(k_{j}) \leqslant \frac{\varphi(k_{0})}{ \nu^{j}}, \quad \text{ for some } \nu>1
\end{equation}
if $k$ is sufficiently large. 

We prove by induction. Obviously  \eqref{45} is true for $j=0$. Suppose it is true for $j-1$. Then, 
\[ \varphi(k_{j-1})^{1+\epsilon} \leqslant \left( \frac{\varphi(k_{0})}{ \nu^{j-1}} \right)^{1+\epsilon}
\leqslant \frac{\varphi(k_{0})^{\epsilon}}{ \nu^{(j-1)(1+\epsilon)-j}} \frac{\varphi(k_{0})}{ \nu^{j}}.\]
By \eqref{44}, we have
\begin{equation}
	\begin{aligned}
		\varphi(k_{j}) &\leqslant C \frac{2^{j}(k_{0}+k)}{k} \frac{2^{\epsilon j}}{k^{\epsilon}} \cdot \frac{\varphi(k_{0})^{\epsilon}}{ \nu^{(j-1)(1+\epsilon)-j}} \frac{\varphi(k_{0})}{ \nu^{j}}
		\\& = C \nu^{1+ \epsilon} \cdot \frac{k_{0}+k}{k} \cdot \left( \frac{\varphi (k_{0})}{k} \right)^{\epsilon} \cdot \frac{2^{j(1+\epsilon)}}{\nu^{j \epsilon}} \cdot \frac{\varphi(k_{0})}{ \nu^{j}}.
	\end{aligned}
\end{equation}
Take $ \nu^{\epsilon} = 2^{1+\epsilon}$. Choose $k= C_{*}(k_{0} + \varphi(k_{0}))$, for $C_{*}$ large enough. Then \eqref{45} follows from
\[ C \nu^{1+ \epsilon} \cdot \frac{k_{0}+k}{k} \cdot \left( \frac{\varphi (k_{0})}{k} \right)^{\epsilon} \cdot \frac{2^{j(1+\epsilon)}}{\nu^{j \epsilon}} \leqslant  C \nu^{1+ \epsilon} \cdot2 \cdot \left( \frac{1}{C_{*}} \right)^{\epsilon} \cdot 1 \leqslant 1. \]

Let $j\rightarrow \infty$ in \eqref{45}, then $\varphi (k_{0}+k) = 0$, i.e., 
\[ \| (u-(k_{0} + k))^{+}\|_{L^{2}(\Omega)} = 0.\]
Thus, for a.e. $x \in \Omega$,
\[ \sup_{\Omega} u \leqslant k_{0}+k \leqslant (C_{*}+1)(k_{0}+ \varphi(k_{0})) < \infty.\]
Since $-u$ is the weak solution of problem~\eqref{problem} with $-f$, we have
\[ \inf_{\Omega} u = -\sup_{\Omega} (-u) \geqslant - (C_{*}+1)(k_{0}+ \varphi(k_{0})). \]

Now we deduce that $u \in L^{\infty}(\Omega)$.
\end{proof}

\subsection{ $C^{2,\alpha}$-regularity}\label{Subsec: Up to C2-regularity}  
In this section, we derive the interior $C^{2, \alpha}$-regularity and the $C^{2, \alpha}$-regularity up to the boundary of weak solutions for the mixed operator $\mathcal{L}_a$.
	The proofs employ techniques similar to those established in ~\cite[Theorems~1.5 and 1.6]{SVWZ25}, we summarize the key steps below for completeness. Accordingly, such solutions of problem \eqref{problem} have some symmetry properties.

Before presenting the regularity results, we first introduce some notations that will be used throughout this section.
	
	\begin{enumerate}[(a)]
		\item Define $g(x,u) := -a(x)u + f(x,u)$. 
		
		\item For a bounded function $u \in \mathcal{X}^{1,2}(\Omega)$, let $I_u = [-\|u\|_{L^\infty(\Omega)}, \|u\|_{L^\infty(\Omega)}]$.
		
		\item For an open set $V$ with $V \subset \subset \Omega$, define
		\[	\rho = \mathrm{dist}(V, \partial \Omega), \quad \text{and} \quad V_\delta = \{x \in \Omega : \mathrm{dist}(x, V) < \delta\}. \]
		
		\item For any given $x_0 \in V_{\rho/4}$, set
		\[ 	0 < R < \min\left(1/2, \rho/10\right), \quad B_R(x_0) = \{x \in \Omega : |x - x_0| < R\}. \]
		
		\item Define the interior norms as
		\[ [u]_{\alpha; B_R(x_0)} = \sup_{\substack{x, y \in B_R(x_0),\, x \neq y}} \frac{|u(x) - u(y)|}{|x - y|^{\alpha}}, \quad 0 < \alpha < 1; \]
		\[	|u|'_{k; B_R(x_0)} = \sum_{j=0}^{k} R^j \|D^ju\|_{L^\infty(B_R(x_0))}; \quad
		|u|'_{k, \alpha; B_R(x_0)} = |u|'_{k; B_R(x_0)} + R^{k+\alpha}[D^ku]_{\alpha; B_R(x_0)}. \]
	\end{enumerate}

\begin{Theorem}[Interior $C^{1, \alpha}$-regularity]\label{Thm: Interior C1-regularity}
	Suppose $u \in \mathcal { X } ^ { 1 , 2 } ( \Omega )$ is a bounded weak solution of 
	\[-\Delta u + (-\Delta)^{s} u+ a(x) u =f(x,u)  \quad \text{in }\Omega, \]
	where $a(x) \in L^{\infty}(\Omega)$ and $f(x,t) \in L^{\infty}_{loc}(\Omega \times \mathbb{R})$. Assume $V$ is an open domain with $ V \subset \subset \Omega$. Then, $u \in C^{1, \alpha}(\bar{V})$ for any $\alpha \in (0, 1)$.
\end{Theorem}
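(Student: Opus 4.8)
\textbf{Proof proposal for Theorem~\ref{Thm: Interior C1-regularity}.} The plan is to freeze the nonlocal term and reduce the problem to a purely local elliptic equation with a right-hand side that is already known to be bounded, then apply classical Schauder-type (De~Giorgi--Nash--Moser / Calder\'on--Zygmund) interior estimates. Concretely, since $u$ is a bounded weak solution, set $g(x,u) := -a(x)u + f(x,u)$; the hypotheses $a \in L^\infty(\Omega)$ and $f \in L^\infty_{loc}(\Omega\times\mathbb{R})$ together with $u \in L^\infty(\Omega)$ give $g(\cdot,u(\cdot)) \in L^\infty_{loc}(\Omega)$. Moreover, because $u \in \mathcal{X}^{1,2}(\Omega) \subset H^s(\mathbb{R}^n)$ with $[u]_s < \infty$ and $u$ is bounded, one checks that $(-\Delta)^s u$, interpreted in the weak/distributional sense, defines an element of $L^\infty_{loc}(\Omega)$: indeed, splitting the singular integral into the region $|x-y|<1$ (controlled by the $C^{2}$-ish behaviour, or more carefully by a further bootstrap) and $|x-y|\ge 1$ (controlled by $\|u\|_{L^\infty}$), the tail is trivially bounded, and the near-diagonal part is handled by the a~priori $H^1$-regularity combined with the cutoff-and-mollification argument of \cite{SVWZ25}. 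Thus $u$ solves, weakly in $\Omega$,
\[
-\Delta u = g(x,u) - (-\Delta)^s u =: h(x) \in L^\infty_{loc}(\Omega).
\]

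Next I would localize: fix $V \subset\subset \Omega$, choose an intermediate open set $W$ with $V \subset\subset W \subset\subset \Omega$, and take a cutoff $\eta \in C_0^\infty(W)$ with $\eta \equiv 1$ on a neighbourhood of $\bar V$. The function $w := \eta u$ satisfies $-\Delta w = \eta h - 2\nabla\eta\cdot\nabla u - u\Delta\eta$ in $W$; the right-hand side lies in $L^p(W)$ for every $p < \infty$ once we know $\nabla u \in L^p_{loc}$, which itself follows from $L^p$-elliptic estimates starting from $h \in L^\infty_{loc} \subset L^p_{loc}$. So by the Calder\'on--Zygmund estimate $w \in W^{2,p}(W)$ for all $p<\infty$, hence by Morrey's embedding $u \in C^{1,\alpha}(\bar V)$ for every $\alpha \in (0,1)$. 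This is the standard $L^p$-to-$C^{1,\alpha}$ bootstrap; the only subtlety is making sure the nonlocal term is genuinely a fixed $L^\infty_{loc}$ (or $L^p_{loc}$) datum and not itself requiring the conclusion.

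That last point is where the \textbf{main obstacle} lies: controlling $(-\Delta)^s u$ on a slightly smaller ball requires a small bootstrap loop, since a priori $u$ is merely in $\mathcal{X}^{1,2}(\Omega) \cap L^\infty(\Omega)$, and the near-diagonal part of the singular integral $\int_{B_R(x_0)} \frac{u(x)-u(y)}{|x-y|^{n+2s}}\,dy$ is not obviously finite pointwise. The remedy, following \cite{SVWZ25}, is to first mollify: for $u_\varepsilon := u * \phi_\varepsilon$ one has classical bounds on $(-\Delta)^s u_\varepsilon$ in terms of $\|u_\varepsilon\|_{C^2(B_R)}$ and $\|u\|_{L^\infty(\mathbb{R}^n)}$, then pass to the limit using that $u$ solves the equation, extracting uniform-in-$\varepsilon$ interior estimates from the local Calder\'on--Zygmund theory applied to $u_\varepsilon$, which solves $-\Delta u_\varepsilon = (g(\cdot,u) - (-\Delta)^s u) * \phi_\varepsilon$ up to a commutator error that vanishes. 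Keeping track of the cutoff radii so that each estimate is on a strictly smaller set than the previous one, and verifying the commutator/error terms are harmless, is the technical heart; everything else is the classical interior regularity machinery quoted from \cite{GTbook}. I would present this as: (i) $g(\cdot,u) \in L^\infty_{loc}$; (ii) $(-\Delta)^s u \in L^\infty_{loc}$ via mollification and the tail estimate; (iii) rewrite as $-\Delta u = h \in L^\infty_{loc}$ and invoke local $W^{2,p}$ estimates plus Morrey to conclude $u \in C^{1,\alpha}(\bar V)$ for all $\alpha\in(0,1)$.
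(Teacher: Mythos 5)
Your overall strategy --- moving $(-\Delta)^s u$ to the right-hand side and treating the problem as $-\Delta u = h$ with local Calder\'on--Zygmund/Morrey theory --- is genuinely different from the paper's proof, which never freezes the nonlocal term: the paper mollifies $u$, multiplies by a cutoff $\phi^R$, and applies interior $C^{1,\alpha}$ estimates for the \emph{full mixed operator} $-\Delta+(-\Delta)^s$ (quoting \cite[Proposition 2.18]{FRRO22}) to $v_\varepsilon=\phi^R u_\varepsilon$, absorbing the term $\delta|u_\varepsilon|'_{1,\alpha;B_{2R}}$ by the iteration/covering lemma of \cite{SVWZ25} and concluding with Arzel\`a--Ascoli. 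Your route could in principle work, but as written it has a genuine gap exactly at the point you call step (ii): the claim that $(-\Delta)^s u \in L^\infty_{loc}(\Omega)$ follows from $u \in \mathcal{X}^{1,2}(\Omega)\cap L^\infty$ ``by the a priori $H^1$-regularity combined with mollification'' is not justified and is false at that level of regularity. For $s\geqslant 1/2$, $u\in H^1\cap L^\infty$ only gives $(-\Delta)^s u \in H^{1-2s}_{loc}$, a space of negative order, and even for $s<1/2$ one only gets an $L^2$-type bound, not a pointwise one; controlling the near-diagonal part of the singular integral pointwise requires $u\in C^{2s+\epsilon}_{loc}$ (i.e.\ $C^{1,\gamma}$ with $\gamma>2s-1$ when $s\geqslant 1/2$), which is essentially the conclusion you are trying to prove. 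The mollification remark does not repair this: the bound on $(-\Delta)^s u_\varepsilon$ in terms of $\|u_\varepsilon\|_{C^2(B_R)}$ is not uniform in $\varepsilon$ unless $u$ is already regular, so you cannot ``pass to the limit'' to conclude $h\in L^\infty_{loc}$.

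To make your approach rigorous you would need an actual iterative scheme rather than a one-shot reduction: for instance, a bootstrap along the Bessel-potential scale ($u\in H^{1}_{loc}\Rightarrow (-\Delta)^su\in H^{1-2s}_{loc}\Rightarrow u\in H^{3-2s}_{loc}\Rightarrow\cdots$, gaining $2-2s>0$ derivatives per step, with cutoffs and tail estimates controlled by $\|u\|_{L^\infty(\mathbb{R}^n)}$ to handle the nonlocality under localization), continued until $(-\Delta)^s u$ is locally bounded, and only then your $W^{2,p}$-plus-Morrey step. This is precisely the ``technical heart'' you flag but do not supply, and it is the part the paper's argument circumvents by using regularity estimates adapted to the mixed operator itself; so as it stands the proposal is incomplete at its crucial step.
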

\begin{proof}
		The proof follows via a truncation method and a covering argument as in \cite[Theorem 1.4]{SVWZ25}. For the reader's convenience, we provide a sketch of the proof.
				
		\textbf{Step 1.} Regularize the solution by the standard mollifier.
		For \( 0 < \varepsilon < R \), define the mollification
		\[ 
		u_\varepsilon(x) = (\eta_\varepsilon * u)(x) = \int_\Omega \eta_\varepsilon(x-y)u(y)\,dy,
		\]
		which satisfies
		\[
		-\Delta u_\varepsilon + (-\Delta)^s u_\varepsilon = g_\varepsilon \quad \text{in } V_{3\rho/4},
		\]
		where \( g_\varepsilon = \eta_\varepsilon * g(x,u) \). 
		
		Recalling the assumptions on \( a \) and \( f \), since \( u \) is bounded, it follows that \( g(x,u) \) belongs to \( L^\infty_{\mathrm{loc}}(\Omega \times \mathbb{R}) \). Moreover, the standard properties of convolution imply that:
		\begin{itemize}
			\item \( u_\varepsilon \in C^2(\overline{V}_{3\rho/4}) \cap L^\infty(\mathbb{R}^n) \) and 
			 $$\|u_\varepsilon\|_{L^\infty(\mathbb{R}^n)} \leqslant \|u\|_{L^\infty(\mathbb{R}^n)} .$$
			\item For every $ x_0 \in V_{\rho/4}$,  one has that
			$$ \|g_\varepsilon\|_{L^\infty(B_R(x_0))} \leqslant \|g\|_{L^\infty(\overline{B}_{2R}(x_0) \times I_u)}.$$
		\end{itemize}
		
		\textbf{Step 2.} Use a cutoff  technique and a cover argument to get the conclusion.
		Consider a cutoff function \( \phi^R \in C_0^\infty(\mathbb{R}^n) \) satisfying
		\begin{equation}\label{cutoff function}
			\phi^R \equiv 1 \text{ on } B_{3R/2}(x_0), \quad \mathrm{supp}(\phi^R) \subset B_{2R}(x_0), \quad 0 \leqslant \phi^R \leqslant 1.
		\end{equation}
		We define \( v_\varepsilon := \phi^R u_\varepsilon \),  then   $v_\varepsilon$ satisfies  
		\[
		-\Delta v_\varepsilon + (-\Delta)^s v_\varepsilon = \psi_\varepsilon \quad \text{in } V_{3\rho/4},
		\]
		where 
		\[
		\psi_\varepsilon := g_\varepsilon(x,u) + \Delta(u_\varepsilon(1-\phi^R)) - (-\Delta)^s(u_\varepsilon(1-\phi^R))
		\]
		is bounded, with the estimate
		\[
		R^2 \|\psi_\varepsilon\|_{L^\infty(B_R(x_0))} \leqslant C(n, s, \rho) \left( \|g_\varepsilon\|_{L^\infty(B_R(x_0))} + \|u_\varepsilon\|_{L^\infty(\mathbb{R}^n)} \right).
		\]
		
		Using interpolation inequalities and the result from \cite[Proposition 2.18]{FRRO22}, we derive a priori bounds for the \( C^{1,\alpha} \)-norm of \( v_\varepsilon \)  in small balls. Specifically, for every \( x_0 \in V_{\rho/4} \) and \( \delta > 0 \), there exists \( C_\delta > 0 \) such that
		\begin{align*}
			&\quad |u_{\varepsilon}|'_{1,\alpha; B_{R/2}(x_0)} = |v_{\varepsilon}|'_{1,\alpha; B_{R/2}(x_0)} \\
			&\leqslant C_{n,s,\alpha,\rho} \left( R^2 \|\psi_{\varepsilon}\|_{L^\infty(B_R(x_0))} + \delta |u_{\varepsilon}|'_{1,\alpha; B_{2R}(x_0)} + C_\delta \|u_{\varepsilon}\|_{L^\infty(B_{2R}(x_0))} \right) \\
			&\leqslant C_{n,s,\alpha,\rho} \left( \|g\|_{L^\infty(V_{3\rho/4} \times I_u)} + C_\delta \|u\|_{L^\infty(\mathbb{R}^n)} + \delta |u_{\varepsilon}|'_{1,\alpha; B_{2R}(x_0)} \right)
		\end{align*}
		for every \( R \in (0, \rho/10) \), \(\varepsilon \in (0, R) \).
		
		 From~\cite[Proposition 4.2]{SVWZ25}, it follows that 
		\begin{equation}\label{vsvsdv}
			\begin{split}
				\|u_{\varepsilon}\|_{C^{1,\alpha}(\bar{B}_{\rho/40}(y))}&\leqslant C\left( \|g\|_{L^\infty(V_{3\rho/4} \times I_u)} +  \|u\|_{L^\infty(\mathbb{R}^n)} \right)\\
				&\leqslant C\left( \|f\|_{L^\infty(V_{3p/4} \times I_u)} + \left(\|a\|_{L^{\infty}(\Omega)} + 1 \right) \|u\|_{L^\infty(\mathbb{R}^n)} \right)
			\end{split}
		\end{equation}
		for every $y\in V$, where the constant $C>0$ depends on $n,s,\alpha,\rho.$
		
		Owing to the Arzel\`a-Ascoli theorem and the covering argument,   we obtain that
		\begin{equation}\label{C1 norm estimate}
			\begin{aligned}
			 \|u\|_{C^{1,\alpha}(\overline{V})} &\leqslant C \left( \|f\|_{L^\infty(V_{3p/4} \times I_u)} + \left(\|a\|_{L^{\infty}(\Omega)} + 1 \right) \|u\|_{L^\infty(\mathbb{R}^n)} \right)\\
				&\leqslant C_{n,s,\alpha,\rho,\|a\|_{L^{\infty}(\Omega)}} \left( \|f\|_{L^\infty(V_{3p/4} \times I_u)} + \|u\|_{L^\infty(\mathbb{R}^n)} \right). \quad \qedhere
			\end{aligned}
		\end{equation} 
		
\end{proof}

\begin{Theorem}[Interior $C^{2, \alpha}$-regularity]\label{Thm: Interior C2-regularity}
    Suppose $u \in \mathcal { X } ^ { 1 , 2 } ( \Omega )$ is a bounded weak solution of 
    \[-\Delta u + (-\Delta)^{s} u+ a(x) u =f(x,u)  \quad \text{in }\Omega, \]
     where $a(x) \in  L^{\infty}(\Omega) \cap C^{\alpha}_{loc}(\Omega)$ and $f(x,t) \in C^{\alpha}_{loc}(\Omega \times \mathbb{R})$. Assume $V$ is an open domain with $ V \subset \subset \Omega$. Then, $u \in C^{2, \alpha}(\bar{V})$ for any $\alpha \in (0, 1)$.
\end{Theorem}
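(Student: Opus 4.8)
The plan is to bootstrap from the interior $C^{1,\alpha}$-regularity established in Theorem~\ref{Thm: Interior C1-regularity}: first upgrade the regularity of the right-hand side, then re-run the mollifier and cutoff scheme of that proof with Schauder estimates adapted to a H\"older source. \textbf{To begin}, Theorem~\ref{Thm: Interior C1-regularity} gives $u\in C^{1,\alpha}(\bar{W})$ for every $W\subset\subset\Omega$, so $u$ is locally Lipschitz. Writing $g(x,u)=-a(x)u(x)+f(x,u(x))$, the product $a(\cdot)u(\cdot)$ lies in $C^\alpha_{loc}(\Omega)$ since both factors do and are locally bounded, while $x\mapsto f(x,u(x))$ lies in $C^\alpha_{loc}(\Omega)$ because $x\mapsto(x,u(x))$ is locally Lipschitz and $f\in C^\alpha_{loc}(\Omega\times\mathbb{R})$. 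Hence $g(\cdot,u)\in C^\alpha_{loc}(\Omega)$, with local $C^\alpha$-norm controlled by those of $a$ and $f$ and by $\|u\|_{C^{1,\alpha}}$.

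\textbf{Next}, exactly as in the proof of Theorem~\ref{Thm: Interior C1-regularity}, set $u_\varepsilon=\eta_\varepsilon*u$, which solves $-\Delta u_\varepsilon+(-\Delta)^s u_\varepsilon=g_\varepsilon$ in $V_{3\rho/4}$ with $g_\varepsilon=\eta_\varepsilon*g(\cdot,u)$; since convolution does not increase the $C^\alpha$-seminorm, the family $\{g_\varepsilon\}$ is now \emph{uniformly bounded in $C^\alpha$} on compact subsets. With the cutoff $\phi^R$ from~\eqref{cutoff function} and $v_\varepsilon=\phi^R u_\varepsilon$, one obtains $-\Delta v_\varepsilon+(-\Delta)^s v_\varepsilon=\psi_\varepsilon$ in $V_{3\rho/4}$, where $\psi_\varepsilon=g_\varepsilon+\Delta(u_\varepsilon(1-\phi^R))-(-\Delta)^s(u_\varepsilon(1-\phi^R))$. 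On $B_R(x_0)$ the term $\Delta(u_\varepsilon(1-\phi^R))$ vanishes, and $(-\Delta)^s(u_\varepsilon(1-\phi^R))$ is the integral of a bounded function against a kernel that is smooth and bounded there, hence $C^\infty$ on $B_R(x_0)$ with norms controlled by $\|u_\varepsilon\|_{L^\infty(\mathbb{R}^n)}\leqslant\|u\|_{L^\infty(\mathbb{R}^n)}$. Therefore $\|\psi_\varepsilon\|_{C^\alpha(B_R(x_0))}\leqslant\|g(\cdot,u)\|_{C^\alpha(B_{2R}(x_0))}+C(n,s,\rho,R)\|u\|_{L^\infty(\mathbb{R}^n)}$, uniformly in $\varepsilon$.

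\textbf{The main obstacle} is the $C^{2,\alpha}$-interior estimate for $-\Delta+(-\Delta)^s$ with a $C^\alpha$ right-hand side (the second-order counterpart of the estimate used for Theorem~\ref{Thm: Interior C1-regularity}, as in \cite{FRRO22} and \cite[Theorem~1.5]{SVWZ25}), combined with interpolation to absorb the lower-order terms. The point to treat carefully is that $(-\Delta)^s v_\varepsilon$ is only a lower-order perturbation of $-\Delta v_\varepsilon$: from the uniform $C^{1,\alpha}$-bound on $v_\varepsilon$ (Theorem~\ref{Thm: Interior C1-regularity} applied to $u_\varepsilon$), the fractional regularity theory gives at first only $(-\Delta)^s v_\varepsilon\in C^{0,\sigma}$ with $\sigma=\min\{\alpha,\,1+\alpha-2s\}$, possibly smaller than $\alpha$; but reading the equation as $-\Delta v_\varepsilon=\psi_\varepsilon-(-\Delta)^s v_\varepsilon$ and invoking the classical Schauder estimate yields a uniform $C^{2,\sigma}$-bound, and iterating finitely many times the H\"older exponent improves by $2-2s>0$ at each step (the Laplacian gains two derivatives, the fractional part loses only $2s<2$) until it is capped at $\alpha$ by $\psi_\varepsilon\in C^\alpha$. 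This produces, for every $y\in V$, a uniform bound $\|u_\varepsilon\|_{C^{2,\alpha}(\bar{B}_{\rho/40}(y))}\leqslant C\big(\|g(\cdot,u)\|_{C^\alpha(V_{3\rho/4})}+\|u\|_{L^\infty(\mathbb{R}^n)}\big)$ with $C=C(n,s,\alpha,\rho)$; note that, unlike at the boundary, no restriction such as $s<1/2$ is needed here precisely because this gain stays positive for all $s\in(0,1)$.

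\textbf{Finally}, a covering argument upgrades the local bounds to $\|u_\varepsilon\|_{C^{2,\alpha}(\bar{V})}\leqslant C$; since $u_\varepsilon\to u$ in $L^1_{loc}$, the Arzel\`a-Ascoli theorem applied to $u_\varepsilon$, $Du_\varepsilon$ and $D^2u_\varepsilon$ extracts a subsequence converging in $C^2(\bar{V})$ to $u$, and lower semicontinuity of the H\"older seminorm under $C^2$-convergence gives $u\in C^{2,\alpha}(\bar{V})$ with the same bound. As $\alpha\in(0,1)$ was arbitrary, this completes the argument.
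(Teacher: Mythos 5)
Your proposal follows the same three-step structure as the paper's proof (mollify, cutoff, interior Schauder estimate plus Arzel\`a-Ascoli and a covering argument) and is correct. The only variation is at the crucial Schauder step: the paper invokes the classical interior estimate for $-\Delta$ (\cite[Theorem 4.6]{GTbook}) together with an interpolation inequality, producing an estimate with a small parameter $\delta$ times $|u_\varepsilon|'_{2,\alpha;B_{2R}}$ on a larger ball, and then absorbs this term via the iteration lemma \cite[Proposition 5.2]{SVWZ25}; you instead bootstrap the H\"older exponent by reading the equation as $-\Delta v_\varepsilon=\psi_\varepsilon-(-\Delta)^s v_\varepsilon$ and gaining $2-2s>0$ in each pass until the exponent is capped at $\alpha$. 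Both mechanisms are standard and give the same uniform $C^{2,\alpha}$ bound; your remark that no restriction $s<1/2$ is needed for the interior result (because the net gain $2-2s$ stays positive for all $s\in(0,1)$) is a useful observation that aligns with the paper's Remark~\ref{remark: restriction s}. For completeness you might note that the finitely many bootstrap passes require a nested family of balls and that one should avoid integer H\"older exponents, but these are routine technicalities.
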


\begin{proof}
	The proof follows a suitable truncation method combined with interior $C^{1,\alpha}$-regularity argument, extending \cite[Theorem 1.5]{SVWZ25}. For the reader's convenience, we outline the key steps.

%	  $u \in C^{1,\alpha}_{\mathrm{loc}}(\Omega)$. This implies since
%	\[
%	\|a u\|_{C^\alpha} \leq \|a\|_{C^\alpha} \|u\|_{L^\infty} + \|a\|_{L^\infty} \|u\|_{C^\alpha} \leq C(\|a\|_{C^\alpha} + \|a\|_{L^\infty})(\|f\|_{L^\infty} + \|u\|_{L^\infty})
%	\]
%	on any open set compactly supported in $\Omega$.

	\textbf{Step 1.} Regularization.
	For $0 < \varepsilon < R$, the mollified functions satisfy
	\[ -\Delta u_{\varepsilon} + (-\Delta)^s u_{\varepsilon} = g_{\varepsilon} \quad \text{in } V_{3\rho/4} \]   
	with $u_{\varepsilon} \in C^{2, \alpha}(\overline{V}_{3\rho/4}) \cap L^{\infty}(\mathbb{R}^n)$. 
	
		In view of  Theorem~\ref{Thm: Interior C1-regularity}, we can infer that 
	$g(x,u) \in C^{\alpha}_{\mathrm{loc}}(\Omega \times \mathbb{R})$.  More specifically, one has that
	$$ \|g_{\varepsilon}(\cdot, u(\cdot))\|_{C^{\alpha}(\overline{B}_R(x_0))} \leqslant \|g\|_{C^{\alpha}(\overline{B}_{2R}(x_0) \times I_u)} \left(1 + \|Du\|_{L^{\infty}(B_{2R}(x_0))}\right), \forall x_0 \in V_{\rho/4}.$$
%	\begin{itemize}
%		\item  $\|u_{\varepsilon}\|_{L^{\infty}(\mathbb{R}^n)} \leqslant \|u\|_{L^{\infty}(\mathbb{R}^n)}$.  
%		\item 
%		$ \|g_{\varepsilon}(\cdot, u(\cdot))\|_{C^{\alpha}(\overline{B}_R(x_0))} \leqslant \|g\|_{C^{\alpha}(\overline{B}_{2R}(x_0) \times I_u)} \left(1 + \|Du\|_{L^{\infty}(B_{2R}(x_0))}\right), \forall x_0 \in V_{\rho/4}.$
%	\end{itemize}

	\textbf{Step 2.} Local estimate via cutoff argument.    
Let us denote $v_{\varepsilon} := \phi^R u_{\varepsilon}$, we obtain that
	\[ -\Delta v_{\varepsilon} + (-\Delta)^s v_{\varepsilon} = \psi_{\varepsilon} \quad \text{in } V_{3\rho/4}, \]  
	where $\phi^R$ is as in~\eqref{cutoff function}. In particular, 
	\[ R^2 |\psi_{\varepsilon}|_{0, \alpha; B_R(x_0)}' \leqslant C(n, s, \rho) \left( R^2 |g_{\varepsilon}(\cdot, u(\cdot))|_{0, \alpha; B_R(x_0)}' + \|u_{\varepsilon}\|_{L^{\infty}(\mathbb{R}^n)} \right). \]  
	
	\textbf{Step 3.} Compactness via Arzel\`a-Ascoli theorem.
	Using~\cite[Theorem 4.6]{GTbook} and the interpolation inequalities, we derive the $C^{2, \alpha}$-norm of $v_{\varepsilon}$  in small balls. Specifically, for every $x_0 \in V_{\rho/4}$ and $\delta>0$, there exists $C_\delta$ such that
	\begin{align*}
		&\quad |u_\varepsilon|_{2,\alpha; B_{R/2}(x_0)}' = |v_\varepsilon|_{2,\alpha; B_{R/2}(x_0)}' \\
		&\leqslant C_{n,s,\alpha,\rho} \left( R^2 |\psi_\varepsilon|_{0,\alpha; B_R(x_0)}' + \delta |u_\varepsilon|_{2,\alpha; B_{2R}(x_0)}' + C_\delta \|u\|_{L^\infty(B_{2R}(x_0))} \right) \\
%		&\leqslant C \left( \|g\|_{C^\alpha(\overline{B}_{2R}(x_0) \times I_u)} \left( 1 + \|Du\|_{L^\infty(B_{2R}(x_0))} \right) + \|u\|_{L^\infty(\mathbb{R}^n)}  + \delta |u_\varepsilon|_{2,\alpha; B_{2R}(x_0)}' + C_\delta \|u\|_{L^\infty(B_{2R}(x_0))} \right) \\
		&\leqslant C_{n,s,\alpha,\rho} \left( \|g\|_{C^\alpha(\overline{V}_{3\rho/4} \times I_u)} \left( 1 + \|Du\|_{L^\infty(V_{3\rho/4})} \right) + C_\delta \|u\|_{L^\infty(\mathbb{R}^n)} +\, \delta |u_\varepsilon|_{2,\alpha; B_{2R}(x_0)}' \right),
	\end{align*}
	for every \( R \in (0, \rho/10) \) and \( \varepsilon \in (0, R) \).

	In the light of~\cite[Proposition 5.2]{SVWZ25} %(which guarantees
%	the $C^{2, \alpha}$-norm of $u_{\varepsilon}$ is uniformly bounded  across $\varepsilon$ in some ball), one can establish $C^{2, \alpha}$ continuity of ${u_\varepsilon}$ through a covering argument that extends the local bounds to the entire domain $V$. 
and the Arzel\`a-Ascoli theorem, the sequence $\{u_{\varepsilon}\}$ converges (up to a subsequence) to $u$ in $C^{2, \alpha}(\overline{V})$, which implies $u \in C^{2, \alpha}(\overline{V})$. More precisely,
	\begin{align*}
		&\quad \|u\|_{C^{2,\alpha}(\overline{V})} \leqslant C_{n, s, \alpha, \rho} \left( \|g\|_{C^{\alpha}(\overline{V}_{3\rho/4} \times I_u)} \left( 1 + \|Du\|_{L^\infty(V_{3\rho/4})} \right) + \|u\|_{L^\infty(\mathbb{R}^n)} \right) \\
		&\leqslant C_{n, s, \alpha, \rho} \left( \|g\|_{C^{\alpha}(\overline{V}_{3\rho/4} \times I_u)} \left( 1 + \|g\|_{L^\infty(\overline{V}_{7\rho/8} \times I_u)} + \|u\|_{L^\infty(\mathbb{R}^n)} \right) + \|u\|_{L^\infty(\mathbb{R}^n)} \right) \\
		&\leqslant C_{n, s, \alpha, \rho} \left( \|u\|_{L^\infty(\mathbb{R}^n)} + \|g\|_{C^{\alpha}(\overline{V}_{7\rho/8} \times I_u)} \right) \left( 1 + \|g\|_{C^{\alpha}(\overline{V}_{7\rho/8} \times I_u)} \right).
	\end{align*}
    Thus, $u \in C^{2, \alpha}(\bar{V})$ for any $\alpha \in (0, 1)$.
\end{proof}

\begin{Theorem}[$C^{2, \alpha}$-regularity up to boundary]\label{Thm: C^2 regularity up to boundary}
	Let $s\in(0, 1/2)$ and $\alpha \in (0,1)$ be such that $\alpha + 2s \leqslant 1$.
	Assume $\partial \Omega$ is of class $C^{2, \alpha}$. Suppose $u \in \mathcal { X } ^ { 1 , 2 } ( \Omega )$ is a weak solution of \eqref{problem}. If $a(x) \in C^{\alpha}(\bar\Omega) $ and $f \in C^{\alpha}(\bar\Omega \times \mathbb{R})$ satisfies (H1), then $u \in C^{2, \alpha}(\bar\Omega)$.
\end{Theorem}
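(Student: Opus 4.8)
The plan is to treat the fractional part $(-\Delta)^{s}u$ as a lower-order perturbation of $-\Delta$, so that \eqref{problem} becomes a local Poisson equation with a H\"older right-hand side, to which classical boundary Schauder theory applies. First I would record that $u$ is bounded: since $a\in C^{\alpha}(\bar\Omega)\subset L^{\infty}(\Omega)$ and $f$ satisfies (H1), case (2) of Theorem~\ref{Thm: boundedness} gives $u\in L^{\infty}(\Omega)$; put $I_{u}:=[-\|u\|_{L^{\infty}(\Omega)},\|u\|_{L^{\infty}(\Omega)}]$. Setting $g(x):=f(x,u(x))-a(x)u(x)$, the function $u$ solves, in the weak sense, the purely local Dirichlet problem $-\Delta u=g-(-\Delta)^{s}u$ in $\Omega$, $u=0$ on $\partial\Omega$; the boundary condition is meaningful once $u\in C(\bar\Omega)$, because $u\equiv 0$ in $\mathbb{R}^{n}\setminus\Omega$. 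By Theorem~\ref{Thm: Interior C2-regularity} one already has $u\in C^{2,\alpha}_{\mathrm{loc}}(\Omega)$, so the only genuine work is near $\partial\Omega$.

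The next step is to show that $u$ is Lipschitz up to the boundary. Bounded weak solutions of the mixed equation with bounded right-hand side are H\"older continuous up to $\partial\Omega$ (De Giorgi--Nash--Moser estimates and barriers for $\mathcal{L}_{a}$, as in the references recalled in the introduction), so $u\in C^{0,\gamma_{0}}(\bar\Omega)$ for some $\gamma_{0}\in(0,1)$; in particular $g\in L^{\infty}(\Omega)$ and $(-\Delta)^{s}u$ is pointwise defined and bounded. A barrier argument exploiting $\partial\Omega\in C^{2,\alpha}$ then gives $|u(x)|\leqslant C\,\mathrm{dist}(x,\partial\Omega)$, and combining this with the rescaled interior estimates behind Theorems~\ref{Thm: Interior C1-regularity} and~\ref{Thm: Interior C2-regularity} upgrades $u$ to $u\in C^{0,1}(\bar\Omega)$, hence $u\in C^{0,1}(\mathbb{R}^{n})$ after extension by zero. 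Note that the gradient of $u$ jumps across $\partial\Omega$, so globally $u$ is Lipschitz but not $C^{1}$, which is what limits the gain below.

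Now I would invoke the hypotheses $s\in(0,1/2)$ and $\alpha+2s\leqslant 1$. Since $u\in C^{0,1}(\mathbb{R}^{n})$ and $2s<1$, the standard H\"older estimates for the fractional Laplacian give $(-\Delta)^{s}u\in C^{0,1-2s}(\bar\Omega)$, and $1-2s\geqslant\alpha$ forces $(-\Delta)^{s}u\in C^{0,\alpha}(\bar\Omega)$. Likewise $g=f(\cdot,u)-a\,u\in C^{\alpha}(\bar\Omega)$ by the composition and product rules, using $f\in C^{\alpha}(\bar\Omega\times\mathbb{R})$, $a\in C^{\alpha}(\bar\Omega)$ and $u\in C^{0,1}(\bar\Omega)$. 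Hence $h:=g-(-\Delta)^{s}u\in C^{\alpha}(\bar\Omega)$, and from $-\Delta u=h$ in $\Omega$, $u=0$ on $\partial\Omega$, $\partial\Omega\in C^{2,\alpha}$, classical boundary Schauder theory for $-\Delta$ yields $u\in C^{2,\alpha}(\bar\Omega)$. Equivalently, and in the spirit of the paper's phrasing, one reaches the same conclusion by running the method of continuity on the family $-\Delta w+\sigma(-\Delta)^{s}w+a(x)w=f(x,u(x))$, $\sigma\in[0,1]$, in the space of $C^{2,\alpha}(\bar\Omega)$-functions vanishing outside $\Omega$: solvability is classical at $\sigma=0$, the admissible set of $\sigma$ is open by perturbation and closed thanks to the a priori bound $\|w\|_{C^{2,\alpha}(\bar\Omega)}\leqslant C(\|f(\cdot,u)\|_{C^{\alpha}(\bar\Omega)}+\|w\|_{L^{\infty}})$ --- which rests on the same mapping property of $(-\Delta)^{s}$ together with an interpolation absorption of the lower-order terms --- and at $\sigma=1$ the solution coincides with $u$ by uniqueness of weak solutions.

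I expect the main obstacle to be the boundary control of $(-\Delta)^{s}u$. A solution of \eqref{problem} generically behaves like $\mathrm{dist}(\cdot,\partial\Omega)$ near $\partial\Omega$, since the local operator dominates there, so $(-\Delta)^{s}u$ behaves like $\mathrm{dist}(\cdot,\partial\Omega)^{1-2s}$ and is in general no smoother than $C^{0,1-2s}$ up to $\partial\Omega$; this is precisely why one must require $s\in(0,1/2)$ (so that $1-2s>0$ and $(-\Delta)^{s}u$ is even bounded near the boundary) and $\alpha+2s\leqslant 1$, and why these restrictions are sharp, as discussed in Remark~\ref{remark: restriction s}. A subsidiary technical point is to set up the barrier for the mixed operator, namely to control the size and sign of $(-\Delta)^{s}$ applied to the distance-type comparison function near $\partial\Omega$.
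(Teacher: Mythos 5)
Your proposal is correct and essentially parallel in spirit to the paper's proof, but it chooses a more direct closing step. The paper's route is: (i) $u\in L^{\infty}$ from Theorem~\ref{Thm: boundedness}, (ii) $u\in C^{1,\alpha}(\bar\Omega)$ by invoking the up-to-boundary $C^{1,\alpha}$ estimate from \cite{SVWZ22}, (iii) a method of continuity showing $\mathcal{L}_t=(1-t)(-\Delta)+t\mathcal{L}$ is a bijection from $\mathcal{C}^{2,\alpha}(\bar\Omega)$ onto $C^{\alpha}(\bar\Omega)$, then (iv) Lax--Milgram to identify the classical solution of $\mathcal{L}v=g$ with the weak solution $u$. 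You instead bypass the method of continuity: once $u$ is known to be Lipschitz on $\mathbb{R}^{n}$ (equivalently, once the $C^{1,\alpha}(\bar\Omega)$ estimate is in hand and one extends by zero), you observe that $(-\Delta)^{s}u\in C^{0,1-2s}(\bar\Omega)\subset C^{\alpha}(\bar\Omega)$ under $\alpha+2s\leqslant1$, that $g=f(\cdot,u)-a\,u\in C^{\alpha}(\bar\Omega)$, and then apply the classical Schauder boundary theory for $-\Delta u=h$ together with $W^{1,2}_{0}$-uniqueness to conclude $u\in C^{2,\alpha}(\bar\Omega)$. This is a valid and cleaner shortcut for this particular statement, since it only needs solvability of the local Poisson problem, not surjectivity of the whole family $\mathcal{L}_t$; both routes hinge on exactly the same mapping property of $(-\Delta)^{s}$ on compactly-supported Lipschitz functions, which is precisely where $s\in(0,1/2)$ and $\alpha+2s\leqslant1$ enter. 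Two caveats: your step establishing the Lipschitz bound from scratch (barrier for the distance bound plus rescaled interior estimates for the mixed operator) is much sketchier than the paper's citation of the $C^{1,\alpha}(\bar\Omega)$ theorem in \cite{SVWZ22}, and it is safer simply to quote that result as the paper does; and the assertion $g\in C^{\alpha}(\bar\Omega)$ only requires $u\in C^{\alpha}(\bar\Omega)$, so the full $C^{0,1}$ regularity is used solely for the $(-\Delta)^{s}u$ term, as your argument implicitly exploits.
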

\begin{proof}
	Let $u \in \mathcal { X } ^ { 1 , 2 } ( \Omega )$ is a weak solution of \eqref{problem}. Theorem~\ref{Thm: boundedness} implies $u \in L^{\infty}(\Omega)$. Using the boundedness of continuous functions on closed domain, $a(x)u + f(x,u) \in L^{\infty}(\Omega)$.
	By a similar argument in \cite[Theorem 1.2]{SVWZ22}, we obtain $C^{1, \alpha}$-regularity up to boundary: $u \in C^{1, \alpha}(\bar{\Omega})$ for any $\alpha \in (0, 1)$. 
	
	The $C^{2, \alpha}$-regularity up to boundary follows by a proof similar to \cite[Theorem 1.6]{SVWZ25}. For reader's convenience, we give a sketch of the proof:
	
	Step 1. Denote  $\mathcal{C}^{2,\alpha}(\bar \Omega) := \left\{ u \in C(\mathbb{R}^{n}): u \equiv 0 \text{ in } \mathbb{R}^{n} \backslash \Omega, u|_{\Omega} \in C^{2, \alpha}(\bar\Omega) \right\}$ and $\mathcal{L}_{t} := (1-t)(- \Delta) + t \mathcal{L}$. 
	Note that for any $t \in [0,1]$, $\mathcal{L}_{t}$ is a bounded linear operator from $\mathcal{C}^{2, \alpha}(\bar\Omega)$ to $C^{\alpha}(\bar \Omega)$. Since $\mathcal{L}_{0}= -\Delta$ is surjective, applying the continuity method, we deduce that, for every $g \in C^{\alpha}(\bar \Omega)$ there exists a unique $v \in \mathcal{C}^{2, \alpha}(\bar\Omega)$ such that $\mathcal{L} v = g$ a.e. in $\Omega$. 
	
	Step 2.  Using \textit{Lax-Milgram Theorem}  to  bilinear mapping $B_{s}[u,v]$ and bounded linear functional $\bar f_{a}: \mathcal{X}^{1,2}(\Omega) \rightarrow \mathbb{R}$
	\[v \mapsto \int_{\Omega} -a(x)uv ~dx + \int_{\Omega} f(x,u)v ~dx\]
%	$f_{a}(x,u)(\cdot): v \rightarrow (f_{a}(x,u), v)_{L^{2}(\Omega)}$ 
	where $u \in \mathcal{X}^{1,2}(\Omega)$ is a weak solution, we deduce that the unique solution $u \in \mathcal{C}^{2,\alpha}(\bar \Omega)$.
\end{proof}

\begin{remark}\label{remark: restriction s}
	The restriction \(s \in (0, 1/2)\) and $\alpha\in(0,1)$ satisfying \(\alpha + 2s \leqslant 1\) in Theorem \ref{Thm: C^2 regularity up to boundary} is sharp. 
	
	\textbf{1. $s\in(0, 1/2)$ is unavoidable.}
	Even though the Laplacian dominates in local smoothness (see Theorems 4.3 and 4.4), the nonlocality of the fractional Laplacian affects the \(C^{2, \alpha}\)-regularity up to the boundary and such effect cannot be ignored for \(s \geqslant 1/2\). We give a counterexample below to show $s\in(0, 1/2)$ is unavoidable.
	
	%The nonlocal term \((-\Delta)^s u\) involves singular integrals over the entire \(\mathbb{R}^n\), including regions outside \(\Omega\) where \(u = 0\) (due to the Dirichlet condition). 
%	More precisely, near the boundary, these integrals amplify the influence of the zero extension, creating a mismatch between the local smoothness (from the Laplacian) and the nonlocal singularity (from \((-\Delta)^s\)).
%	For \(s \geqslant 1/2\), this mismatch becomes too severe to resolve with \(C^{2, \alpha}\) regularity.  We give a counterexample \ref{example} to show $s\in(0, 1/2)$ is unavoidable.
	
	\textbf{2. \(\alpha + 2s \leqslant 1\) is essential.}
	The condition \(\alpha + 2s \leqslant 1\) ensures compatibility between the H\"older exponent \(\alpha\) and the fractional order \(s\). 
	The fractional Laplacian \((-\Delta)^s\) introduces a weak singularity with a regularity loss of order \(2s\). 
%	For the mixed operator \(-\Delta + (-\Delta)^s\), the contribution of the nonlocal term  near the boundary imposes a constraint on the H\"older exponent \(\alpha\): to maintain \(C^{2, \alpha}\) regularity, the total regularity loss (\(2s\)) must not exceed the available smoothness captured by \(\alpha\).
	
	Our proof of Theorem 1.6 is based on  \cite[Lemma 5.3]{SVWZ25}, whose proof  explicitly uses \(s \in (0, 1/2)\) and $\alpha\in(0,1)$ satisfying \(\alpha + 2s \leqslant 1\) to bound the contribution of the nonlocal term, confirming that these condition are essential.
\end{remark}

\begin{Example}[A counterexample to Remark \ref{remark: restriction s}]\label{example}
		
		Consider the mixed local-nonlocal elliptic equation 
		\begin{equation}\label{eq: example}
			\begin{cases}
				-\Delta u + (-\Delta)^s u + au= f(x, u) & \text{in } (0,1), \\
				u = 0 & \text{in } \mathbb{R} \setminus (0,1).
			\end{cases}
		\end{equation}
		When $s\in (1/2,1)$, the solution $u$ fails to attain $C^2$ regularity at the boundary point 0.
	\end{Example}

		\begin{proof}
			We proceed by contradiction. Assume that the solution \(u\) behaves near the boundary point \(0\) as
			\[ u(x) = Ax + O(x^2) \quad (x \to 0^+), \]
			where \(A \neq 0\) is a positive constant.
			
			We first claim that $\lim_{x\to 0^+}  (-\Delta)^s u(x)= + \infty$. 
			Since $u(x)=0$ outside $(0,1)$,  we separate the integral
			\[ (-\Delta)^s u(x)	= c_{1,s} \, \text{P.V.} \int_{\mathbb{R}} \frac{u(x) - u(y)}{|x - y|^{1 + 2s}} dy, \]
			into the interior region $(0,1)$ and the exterior region $(-\infty,0)\cup(1,+\infty)$.
			
			For $y \in (0,1)$, by Theorem 4.4, we have $u(y) \in C^2$ when $0 \ll y < 1$. Therefore, it suffices to consider the case $0 < y < x$. 
			\begin{align*}
				&\quad \lim_{x\to 0^+}  \int_0^x \frac{u(x)-u(y)}{|x-y|^{1+2s}}dy\\
				&=\lim_{x\to 0^+}  \int_0^x \frac{\left(Ax+O(x^2)\right)- \left(Ay+O(y^2)\right)}{(x-y)^{1+2s}}dy\\
				&=\lim_{x\to 0^+}  \int_0^x \frac{A(x-y)+O(x^2-y^2)}{(x-y)^{1+2s}}dy\\
				&=\lim_{x\to 0^+}  \int_0^x \frac{A}{z^{2s}}dz + \lim_{x\to 0^+}  \int_0^x \frac{O(x^2-y^2)}{(x-y)^{1+2s}}dy= +\infty.
		   \end{align*}
		
		    For  $y \in (-\infty,0)\cup(1,+\infty)$, $u(y)=0$. Thus,
		    \begin{align*}
		        &\quad \lim_{x\to 0^+}  \left( \int_{-\infty}^0 + \int_{1}^{\infty}\right)  \frac{u(x)-u(y)}{|x-y|^{1+2s}}dy\\
		        &= \lim_{x\to 0^+}  \left( Ax + O(x^2)\right)\left( \int_{-\infty}^0 \frac{dy}{|x-y|^{1+2s}}+ \int_{1}^{\infty} \frac{dy}{|x-y|^{1+2s}} \right)\\
		        &=  \lim_{x\to 0^+}  \left( Ax + O(x^2)\right) \left( \int_x^\infty \frac{dz}{z^{1+2s}} + \int_{1}^{\infty} \frac{dy}{(y-x)^{1+2s}} \right)\\
		        &= \frac{1}{2s} \lim_{x\to 0^+}  \left( Ax + O(x^2)\right) \left( x^{-2s} + (1-x)^{-2s} \right)= +\infty.
		    \end{align*}
	        We have thus proven the Claim.
	        
	        The equation \eqref{eq: example} can be written as
	        $ -\Delta u = -au + f(x, u) - (-\Delta)^s u$.
	        When \(x \to 0^+\), the left-hand side \(-\Delta u = -u''(x) = O(1)\), but the right-hand side, if \(s > 1/2\), 
	        $-au + f(x, u) - (-\Delta)^s u \to -\infty$. This leads to a contradiction. Therefore, the assumption that the solution has \(C^{2}\) regularity (i.e., \(u'' = O(1)\) is bounded) when \(s > 1/2\) is invalid.
		\end{proof}		

\begin{Corollary}
	Under the assumption of Theorem~\ref{Thm: C2-regularity up to boundary}, assume $f$ satisfies (H2)-(H4). Then there exists a classical solution $u \in C^{2, \alpha}(\bar\Omega)$.
\end{Corollary}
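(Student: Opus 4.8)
The plan is to obtain the desired classical solution in three moves: first produce a non-trivial weak solution via the variational machinery of Section~\ref{Sec: Global existence results}, then promote it to $C^{2,\alpha}(\bar\Omega)$ by Theorem~\ref{Thm: C2-regularity up to boundary}, and finally observe that this regularity forces the equation to hold pointwise. For the first move, I would note that $f\in C^{\alpha}(\bar\Omega\times\mathbb{R})$ is in particular continuous, so (C) holds, and hence $f$ satisfies all of (C), (H1)--(H4); Theorem~\ref{Thm: LandMP} then supplies a non-trivial weak solution $u\in\mathcal{X}^{1,2}(\Omega)$ --- the Mountain Pass solution when $\lambda_1>0$, and the Linking solution when $\lambda_1\leqslant 0$ (the latter case additionally requiring (P), exactly as in Theorem~\ref{Thm: LandMP}(1)). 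Since the structural hypotheses $s\in(0,1/2)$, $\alpha+2s\leqslant1$, $\partial\Omega\in C^{2,\alpha}$, $a\in C^{\alpha}(\bar\Omega)$ and $f\in C^{\alpha}(\bar\Omega\times\mathbb{R})$ satisfying (H1) are all in force, Theorem~\ref{Thm: C2-regularity up to boundary} applies directly to this $u$ and yields $u\in C^{2,\alpha}(\bar\Omega)$.

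For the last move I would check that a $C^{2,\alpha}(\bar\Omega)$ weak solution vanishing outside $\Omega$ is classical. Fix $x\in\Omega$: the principal value integral defining $(-\Delta)^su(x)$ converges, because near $y=x$ the second-order Taylor expansion of $u$ together with $2s<2$ makes the integrand integrable (the odd part cancels by symmetry), while the far-field part converges since $u$ is bounded with support in $\bar\Omega$. A routine argument, of the sort already used in Theorems~\ref{Thm: Interior C1-regularity}--\ref{Thm: Interior C2-regularity}, shows in addition that $x\mapsto(-\Delta)^su(x)$ is continuous (indeed locally Hölder) on $\Omega$, so every term of $-\Delta u+(-\Delta)^su+a(x)u-f(x,u)$ is continuous there. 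Testing the weak identity \eqref{weak solution} against $\phi\in C_0^\infty(\Omega)$, integrating by parts in the local term and using \eqref{seminorm} to rewrite the Gagliardo bilinear form as $\int_\Omega(-\Delta)^su\,\phi\,dx$, one obtains $\int_\Omega(-\Delta u+(-\Delta)^su+a(x)u-f(x,u))\phi\,dx=0$ for all test $\phi$, hence the integrand vanishes a.e.\ and, by continuity, everywhere in $\Omega$; together with $u\equiv 0$ on $\mathbb{R}^n\setminus\Omega$ this is precisely the statement that $u$ solves \eqref{problem} in the classical sense.

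I do not expect a deep obstacle, since the argument only assembles results already proved. The two places to be careful are: checking that the existence theorem genuinely applies --- in particular that the linking geometry is available when $\lambda_1\leqslant 0$, which is why (P) is invoked in that case --- and justifying the passage from the weak to the pointwise equation, which hinges on $(-\Delta)^su$ being an honest continuous function rather than merely a distribution. That continuity is exactly what the up-to-boundary $C^{2,\alpha}$-regularity delivers, and it is the reason the restrictions $s<1/2$ and $\alpha+2s\leqslant1$ of Theorem~\ref{Thm: C2-regularity up to boundary} are inherited by this corollary.
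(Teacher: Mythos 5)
Your proposal is correct and takes the only natural route: obtain a non-trivial weak solution from Theorem~\ref{Thm: LandMP}, lift it to $C^{2,\alpha}(\bar\Omega)$ by Theorem~\ref{Thm: C2-regularity up to boundary}, and verify that the weak identity then holds pointwise. The paper states the corollary without proof, but this is evidently the intended chain of deductions, and each link is sound. Your verification that a $C^{2,\alpha}(\bar\Omega)$ weak solution is classical is a bit more detailed than strictly necessary --- the proof of Theorem~\ref{Thm: C2-regularity up to boundary} already produces $u\in\mathcal{C}^{2,\alpha}(\bar\Omega)$ with $\mathcal{L}u=g$ holding a.e.\ and hence, by continuity, everywhere --- but spelling out the convergence of the principal-value integral (even-order Taylor remainder handling the local singularity since $2-2s>0$, boundedness and compact support handling the tail) is a legitimate and complete way to close the loop, and it correctly records why $s<1/2$ matters.

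The one genuine subtlety you rightly flag is the case $\lambda_1\leqslant 0$: the existence part of the argument requires the extra hypothesis (P) to run the Linking Theorem, and (P) is not listed among the corollary's assumptions. This is a gap in the corollary's statement rather than in your proof; your honest reading --- Mountain Pass when $\lambda_1>0$, Linking with (P) when $\lambda_1\leqslant 0$ --- is the correct one, and it would be worth noting explicitly that the corollary as phrased is complete only under either $\lambda_1>0$ or the additional assumption (P).
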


Before ending this section, as a corollary of \cite[Theorem 1.1]{BVDV21}, we obtain the radial symmetry of non-negative weak solution.
\begin{Theorem}\label{Thm: symmetric}
	Assume that $\Omega$ is symmetric and convex with respect to the hyperplane $\left\{ x _ { 1 } = 0 \right\} $, $\partial  \Omega$ is of class $C^{1}$ and $a(x) \in L^{\infty }(\Omega)$.	
	%Let $f(u): \mathbb{R} \rightarrow \mathbb{R}$ be a locally Lipschitz continuous function and  	 
	If $ 0 \leqslant u \in C ( \mathbb { R } ^ { n } )$ is a weak solution of \eqref{problem}, then u is symmetric with respect to $\left\{ x _ { 1 } = 0 \right\}$ and strictly increasing in the $x _ { 1 }$ direction in $\Omega \cap \left\{ x _ { 1 } < 0 \right\} .$ 
\end{Theorem}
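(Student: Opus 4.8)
The plan is to deduce Theorem~\ref{Thm: symmetric} from \cite[Theorem 1.1]{BVDV21} by verifying that, in the present setting, the hypotheses needed to run the moving plane method for the mixed operator $\mathcal{L}=-\Delta+(-\Delta)^{s}$ are satisfied by $u$. First I would record the two qualitative facts that make the reflection argument work. Since $u\in C(\mathbb{R}^{n})$ and $u\equiv 0$ outside the bounded set $\Omega$, $u$ is bounded; by (H1) and $a\in L^{\infty}(\Omega)$ the right-hand side $g(x,u):=f(x,u)-a(x)u$ lies in $L^{\infty}(\Omega)$, so $u$ solves $-\Delta u+(-\Delta)^{s}u=g(x,u)$ with a bounded source and is a classical solution in the interior and continuous up to $\partial\Omega$. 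If $u\equiv 0$ the conclusion is trivial; otherwise, writing the equation on $\{u>0\}$ as $-\Delta u+(-\Delta)^{s}u=c(x)\,u$ with $c\in L^{\infty}_{\mathrm{loc}}$ and invoking the strong maximum principle for $\mathcal{L}$ (together with the fact that for the nonlocal term $u\equiv 0$ on a set of positive measure would propagate to $u\equiv 0$), one gets $u>0$ in $\Omega$; the $C^{1}$-regularity of $\partial\Omega$ then supplies a Hopf-type boundary lemma for $\mathcal{L}$.

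Next I would set up the reflected function. For $\lambda\in\big(\inf_{x\in\Omega}x_{1},\,0\big]$ let $\Sigma_{\lambda}=\Omega\cap\{x_{1}<\lambda\}$, let $x^{\lambda}=(2\lambda-x_{1},x_{2},\dots,x_{n})$ be the reflection across $\{x_{1}=\lambda\}$, and put $u_{\lambda}(x)=u(x^{\lambda})$ and $w_{\lambda}=u_{\lambda}-u$. Both $-\Delta$ and $(-\Delta)^{s}$ commute with this reflection; using that $\Omega$ is symmetric and convex with respect to $\{x_{1}=0\}$ (so $\overline{\Sigma_{\lambda}^{\lambda}}\subset\overline{\Omega}$ for $\lambda\leqslant 0$) and the compatibility of the coefficients with the reflection that is assumed in \cite[Theorem 1.1]{BVDV21}, $w_{\lambda}$ satisfies a linear problem $\mathcal{L}w_{\lambda}=c_{\lambda}(x)\,w_{\lambda}$ in $\Sigma_{\lambda}$ with $c_{\lambda}\in L^{\infty}(\Sigma_{\lambda})$, $w_{\lambda}\geqslant 0$ on $\partial\Sigma_{\lambda}$, and — crucially for the nonlocal operator — $w_{\lambda}\geqslant 0$ on all of $\mathbb{R}^{n}\setminus\Sigma_{\lambda}$ (on $\mathbb{R}^{n}\setminus\Omega$ because $u\equiv 0\leqslant u_{\lambda}$, and on $\Omega\cap\{x_{1}\geqslant\lambda\}$ because the reflection sends such points to points of smaller $x_{1}$). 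This global sign information is exactly what is required to apply a comparison principle for $\mathcal{L}$.

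Then I would run the standard two-stage moving plane argument. Stage one: for $\lambda$ near $\inf_{\Omega}x_{1}$ the set $\Sigma_{\lambda}$ has small measure, so the maximum principle in small (narrow) domains for $\mathcal{L}+c_{\lambda}^{-}$ forces $w_{\lambda}\geqslant 0$ in $\Sigma_{\lambda}$. Stage two: let $\Lambda=\sup\{\lambda\leqslant 0:\ w_{\mu}\geqslant 0 \text{ in }\Sigma_{\mu}\ \forall\mu\leqslant\lambda\}$; by continuity $w_{\Lambda}\geqslant 0$ in $\Sigma_{\Lambda}$, and the strong maximum principle for $\mathcal{L}$ gives $w_{\Lambda}\equiv 0$ or $w_{\Lambda}>0$ in $\Sigma_{\Lambda}$. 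If $\Lambda<0$, the second alternative together with the Hopf lemma and a small-domain argument on the region where $w$ could become negative lets one push the plane past $\Lambda$, contradicting its maximality; hence $\Lambda=0$. Repeating the argument in the direction $-e_{1}$ yields $w_{0}\leqslant 0$, so $w_{0}\equiv 0$, i.e. $u$ is symmetric with respect to $\{x_{1}=0\}$. Finally, for every $\lambda<0$ one cannot have $w_{\lambda}\equiv 0$ (this would make $u$ symmetric about two distinct parallel hyperplanes, hence periodic in $x_{1}$, impossible for a function with bounded support), so $w_{\lambda}>0$ in $\Sigma_{\lambda}$; letting $\lambda$ vary gives the strict monotonicity of $u$ in the $x_{1}$-direction on $\Omega\cap\{x_{1}<0\}$, and when $\Omega$ is a ball the same argument along every direction yields radial symmetry.

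The main obstacle, relative to the purely local case, is the nonlocal coupling: since $(-\Delta)^{s}w_{\lambda}(x)$ for $x\in\Sigma_{\lambda}$ involves the values of $w_{\lambda}$ on all of $\mathbb{R}^{n}$, one must (i) establish the global inequality $w_{\lambda}\geqslant 0$ on $\mathbb{R}^{n}\setminus\Sigma_{\lambda}$ \emph{before} invoking any comparison principle, and (ii) use the maximum, strong maximum, narrow-domain and Hopf principles tailored to the mixed operator $\mathcal{L}$ (as developed in \cite{BVDV21}) rather than their classical counterparts. A secondary, but genuine, point is that the symmetry of $u$ forces a corresponding symmetry requirement on the data $a$ and $f(\cdot,t)$ with respect to $\{x_{1}=0\}$ (for instance independence of $x_{1}$, or evenness and monotonicity in $x_{1}$); under this proviso the verification of the hypotheses of \cite[Theorem 1.1]{BVDV21} is complete and the theorem follows.
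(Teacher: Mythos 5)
The paper itself gives no argument here: Theorem~\ref{Thm: symmetric} is stated ``as a corollary of \cite[Theorem~1.1]{BVDV21}'' and the citation \emph{is} the proof. Your proposal is therefore not an alternative route so much as an attempt to reconstruct the moving-plane argument that underlies the cited result. The overall plan (reflect, reduce to a linear antisymmetric problem, narrow-domain maximum principle, push the plane, strong maximum principle and Hopf lemma for $\mathcal{L}=-\Delta+(-\Delta)^{s}$) is the right one, and you are also right that some symmetry hypothesis on $a$ and $f(\cdot,t)$ with respect to $\{x_1=0\}$ is required for the conclusion to hold; the theorem as written omits it, so your observation is a genuine correction rather than a caveat.

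However, there is a concrete error in the step you single out as the main nonlocal difficulty. You claim that $w_{\lambda}=u_{\lambda}-u\geqslant 0$ on all of $\mathbb{R}^{n}\setminus\Sigma_{\lambda}$, and in particular on $\Omega\cap\{x_1\geqslant\lambda\}$, ``because the reflection sends such points to points of smaller $x_1$.'' This is false, and in fact has the wrong sign. Take $x\in\Omega$ with $x_1>\lambda$ whose reflection $x^{\lambda}$ lies outside $\Omega$ (such points exist whenever $\lambda<0$, since $\Omega$ is bounded). Then $u(x^{\lambda})=0$ while $u(x)>0$ (once $u\not\equiv 0$, the strong maximum principle gives $u>0$ in $\Omega$), so $w_{\lambda}(x)=-u(x)<0$. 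More generally, $w_{\lambda}$ is odd under the reflection across $\{x_1=\lambda\}$, i.e.\ $w_{\lambda}(x^{\lambda})=-w_{\lambda}(x)$, so a global one-sided bound on $\mathbb{R}^{n}\setminus\Sigma_{\lambda}$ is simply not available and is not what the nonlocal comparison principle demands. What the argument actually uses, and what \cite[Theorem~1.1]{BVDV21} is built on, is a maximum principle for \emph{antisymmetric} supersolutions of $\mathcal{L}+c$ on $\Sigma_{\lambda}$: the only exterior sign information needed is $w_{\lambda}\geqslant 0$ on $\{x_1<\lambda\}\setminus\Sigma_{\lambda}$ (where indeed $u\equiv 0$ and $u_{\lambda}\geqslant 0$), together with the antisymmetry, which controls the contribution of $\{x_1>\lambda\}$ to $(-\Delta)^{s}w_{\lambda}$ in $\Sigma_{\lambda}$. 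With this replacement your two-stage moving plane outline becomes correct, but as written Step (i) of your ``main obstacle'' discussion would fail.
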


\bibliographystyle{alpha}

\bibliography{reference}

\end{document}